\providecommand{\MR}{\relax\ifhmode\unskip\space\fi MR }
\providecommand{\href}[2]{#2}
\newenvironment{sketch}[1][\proofname]{\proof[#1]\mbox{}\\*}{\endproof}
\newtheorem{definition}{Definition}[chapter]
\newtheorem{remark}{Remark}[chapter]
\newtheorem{sketch of the proof}{Sketch of the proof}
\newtheorem{example}{Example}[chapter]
\newtheorem{theorem}{Theorem}[chapter]
\newtheorem{lemma}{Lemma}[chapter]
\newtheorem{proposition}{Proposition}[chapter]
\begin{document}  
\begin{titlepage}
\vfill
\begin{center} 
A CLASSIFICATION OF HOMOGENEOUS K\"{A}HLER MANIFOLDS WITH DISCRETE ISOTROPY 
AND TOP NON VANISHING HOMOLOGY IN CODIMENSION TWO
\vspace{32pt}

A Thesis \\
Submitted to the Faculty of Graduate Studies and Research \\
In Partial Fulfillment of the Requirements \\
For the Degree of \\
Doctor of Philosophy \\
In \\
Mathematics \\
University of Regina \\

\vspace{32pt}

By \\
Seyedruhallah Ahmadi \\
Regina, Saskatchewan \\
July, 2013 \\

\vspace{32pt}

\copyright \; Copyright 2013: S.Ruhallah Ahmadi \\

\end{center}
\vfill
\end{titlepage}
\clearpage  
\pagenumbering{roman}  
\chapter*{Abstract}
\addcontentsline{toc}{chapter}{Abstract}
Suppose $G$ is a connected complex Lie group and $\Gamma$ is a discrete subgroup 
such that $X := G/\Gamma$ is K\"{a}hler and   
the codimension of the top non--vanishing homology group of $X$ with 
coefficients in $\mathbb Z_2$ is less than or equal to two.  
We show that $G$ is solvable and a finite covering of $X$ is biholomorphic to a product
$C\times A$, where $C$ is a Cousin group and $A$ is $\{ e \}$, 
$\mathbb C$, $\mathbb C^*$, or $\mathbb C^*\times\mathbb C^*$. 

\chapter*{Acknowledgments}
\addcontentsline{toc}{chapter}{Acknowledgments}
I am indebted to my supervisor Professor Bruce Gilligan for all the thoughtful guidance he has given me 
during my PhD program.
It is his love for math that has inspired me to also be truly passionate about math.
I am thankful for all his support, understanding, and patience.

My special thanks to Dr. Fernando Szechtman for sharing his expert knowledge of Mathematics,
especially on a project in the field of Lie algebra.
I also wish to thanks Dr. Donald Stanley, Dr. Liviu Mare, and Dr. Howard Hamilton for their advice and careful reading of the manuscript.
Their suggestions have been valuable and helpful for my thesis.

Finally, the financial support of my supervisor's NSERC grant, department of Mathematics and Statistics, 
and the Faculty of Graduate Studies and Research during my PhD program allowed me to focus solely 
on my PhD program during these past years. 

\chapter*{Post Defense Acknowledgments}  
\addcontentsline{toc}{chapter}{Post Defense Acknowledgments}  

I would like to thank Professor Dr. Alan Huckleberry for carefully reading my thesis, 
making a number of constructive comments that improved it, 
and asking stimulating questions during my doctoral defense.   
I would also like to thank him for his excellent suggestions made regarding my actual presentation during my defense.   
His entire contribution is much appreciated.

\clearpage
\addcontentsline{toc}{chapter}{Dedication}
\begin{center}
{\large To my family}
\end{center}
\tableofcontents
\clearpage

\chapter{Introduction}
\pagenumbering{arabic}   
A K\"{a}hler manifold is a Hermitian manifold whose associated Hermitian form is closed.
Compact K\"{a}hler manifolds have been extensively studied from a number of view
points. For example, a compact complex surface admits a K\"{a}hler structure exactly
when its first Betti number is even; this follows from the Enriques–-Kodaira classification \cite{GH} 
and was also proved directly in \cite{Buc99}. 

\medskip
Dorfmeister and Nakajima classified K\"{a}hler manifolds
on which the group of holomorphic isometries acts transitively \cite{DN88}.
They proved that any such manifold is holomorphically isomorphic to a bounded homogeneous domain,  
a flag manifold and $\mathbb C^n$ modulo a lattice with additive structure or a product of such manifolds.       
 
\medskip
The transitivity condition of isometries is a very strong condition.
Borel-Remmert handled the compact case by showing that any connected compact homogeneous 
K\"{a}hler manifold $X$ 
which is homogeneous under the automorphism group of $X$ can be written as 
a direct product of a complex torus and a flag manifold (see Theorem \ref{torus.flag}).

\medskip
The situation in the non--compact setting, however, is not so well studied nor understood.    
In order to consider problems that are tractable throughout this dissertation
we consider a complex homogeneous manifold $X = G/H$, where $G$ is a connected
complex Lie group acting almost effectively on $X$ and $H$ is a closed complex subgroup
of $G$, such that $X$ has a K\"{a}hler structure. 
However, a classification of such homogeneous K\"{a}hler manifolds without any restriction is not realistic.

\medskip
In this dissertation we consider homogeneous K\"{a}hler manifolds $X$ with a topological invariant $d_X \leq 2$.
We mention that $d_X$ is dual to an invariant introduced by Abels \cite{Abe76}
and is defined as follows:
\[   
        d_X \; =\; \min\{\ r \ | \ H_{n-r}(X,\mathbb Z_2) \ \neq \ 0 \ \}
\]  
where by $H_{n-r}(X,\mathbb Z_2)$ we mean the singular homology of $X$ with coefficients in $\mathbb Z_2$, 
and $\dim X = n$.   
Note that since $X$ is a complex manifold, $X$ is oriented.    

\medskip
The topological invariant $d_X$ can be handled more easily in the algebraic setting, 
because algebraic groups have a finite number of connected components.
In fact, Akhiezer classified $G/H$, 
where $G$ is an algebraic group and $H$ is an algebraic subgroup 
with $d_{G/H} = 1$ in \cite{Akh77} and $d_{G/H} = 2$ in \cite{Akh83}.
On the other hand, a closed subgroup $H$ of a connected Lie group $G$ can have an infinite number of connected components,
and this phenomenon can contribute to the topology of $G/H$ and so influences $d_{G/H}$.
For example, let $G=\mathbb R$ and $H=\mathbb Z$, and $G/H = \mathbb S^1$ which is compact,
not because of any compactness of $G$, but because $H$ is infinite discrete.
This implies that the most delicate problems in working with the invariant $d_X$ occur when the isotropy is discrete.
The purpose of this dissertation is to classify K\"{a}hler $G/H$ with $d_{G/H}=2$
in the important special case when $H$ is discrete. 
In fact we prove the following theorem in chapter \ref{maintheorem}.
Note that the notion of a Cousin group is defined in Definition \ref{cousingroup}. 

\begin{theorem}  
Let $G$ be a connected complex Lie group and $\Gamma$ a discrete 
subgroup of $G$ such that $X := G/\Gamma$ is K\"{a}hler and $d_X \le 2$.  
Then the group $G$ is solvable and a finite covering of $X$ is 
biholomorphic to a product $C \times A$, where $C$ 
is a Cousin group and $A$ is $\{ e \}, \mathbb C^*$, $\mathbb C$, or $(\mathbb C^*)^2$.   
Moreover, $d_X = d_C + d_A$.    
\end{theorem}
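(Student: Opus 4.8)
The plan is to organize everything around the observation that, because $\Gamma$ is discrete, the projection $G \to X = G/\Gamma$ is a covering and $X$ is \emph{complex parallelizable}: the right--invariant holomorphic vector fields on $G$ descend to a global holomorphic frame of $TX$. The analysis will run through two $G$--equivariant holomorphic fibrations. The first is the holomorphic reduction $\pi\colon G/\Gamma \to G/J$, where $J \supseteq \Gamma$ is the smallest closed complex subgroup such that every holomorphic function on $X$ is constant along the fibers; by construction the base $G/J$ is holomorphically separable and the fiber $F = J/\Gamma$ admits only constant holomorphic functions. The second organizing tool is the Borel--Remmert decomposition (Theorem \ref{torus.flag}) applied to the compact homogeneous pieces that arise. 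My overall strategy has five steps: (i) reduce to $G$ solvable; (ii) identify the fiber $F$ as a Cousin group; (iii) identify the base as one of $\{e\}, \mathbb{C}, \mathbb{C}^*, (\mathbb{C}^*)^2$; (iv) split the fibration as a product after a finite covering; and (v) deduce the additivity formula for $d$.

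First I would prove $G$ solvable. Take a Levi--Malcev decomposition $G = S \cdot R$ with $R$ the radical and $S$ a maximal semisimple complex subgroup, and suppose $S \neq \{e\}$. A Kähler structure on a complex parallelizable manifold is very rigid --- on compact parallelizable pieces Wang's theorem forces the relevant structure group to be abelian --- so a nontrivial $S$ should be incompatible with the Kähler condition. Concretely, running the holomorphic reduction and commutator fibrations isolates a homogeneous factor on which $S$ acts, and Borel--Remmert would force this factor to contain a positive--dimensional flag manifold; but a flag manifold is not parallelizable and its $H^2 \neq 0$ would both violate the parallelizability of $X$ and, together with the hypothesis $d_X \le 2$, produce homology in an impermissible range. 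Hence $S = \{e\}$ and $G = R$ is solvable.

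With $G$ solvable I would analyze $\pi$ directly. The fiber $F = J/\Gamma$ is complex parallelizable, inherits a Kähler structure (fibers of an equivariant fibration of a homogeneous Kähler manifold are complex submanifolds, hence Kähler), and carries only constant holomorphic functions. For solvable $J$ this last property forces $J$ to be abelian: nontrivial commutators would manufacture nonconstant holomorphic functions on $F$, so a Kähler complex--parallelizable homogeneous space with $\mathcal{O} = \mathbb{C}$ must be abelian, i.e.\ a toroidal group. By the Remmert--Morimoto decomposition $F = \mathbb{C}^a \times (\mathbb{C}^*)^b \times C$; since $F$ has no nonconstant holomorphic functions the factors $\mathbb{C}^a$ and $(\mathbb{C}^*)^b$ are absent, so $F = C$ is a Cousin group. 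Dually, the holomorphically separable solvmanifold $G/J$ is biholomorphic, up to a finite covering, to $A := \mathbb{C}^k \times (\mathbb{C}^*)^l$.

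It remains to split $X$ as a product and to read off $A$. The bundle $C \to X \to A$ has Cousin--group fiber over a product of copies of $\mathbb{C}$ and $\mathbb{C}^*$, so it is classified by a homomorphism from the abelian fundamental group of the base into $\mathrm{Aut}(C)$; I would show this becomes trivial on a finite--index subgroup, so that after a finite covering $X \cong C \times A$ as claimed. Finally, additivity of $d$ follows from the Künneth formula over the field $\mathbb{Z}_2$: if $Y, Z$ have real dimensions $p, q$ with top nonvanishing $\mathbb{Z}_2$--homology in degrees $p - d_Y$ and $q - d_Z$, then $Y \times Z$ has its top class in degree $(p+q) - (d_Y + d_Z)$, whence $d_{Y \times Z} = d_Y + d_Z$. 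Since $d_{\mathbb{C}} = 2$ and $d_{\mathbb{C}^*} = 1$ give $d_A = 2k + l$, the constraint $d_X = d_C + d_A \le 2$ with $d_C \ge 0$ forces $(k,l) \in \{(0,0),(1,0),(0,1),(0,2)\}$, i.e.\ exactly $A \in \{\{e\}, \mathbb{C}, \mathbb{C}^*, (\mathbb{C}^*)^2\}$. I expect the two main obstacles to be the solvability reduction --- excluding semisimple contributions using only the Kähler condition and $d_X \le 2$ --- and the proof that the holomorphic reduction genuinely splits as a product after a finite cover, since that is what upgrades a fibration statement to the product decomposition asserted in the theorem.
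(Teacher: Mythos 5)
Your high-level skeleton matches the paper's (solvability reduction, holomorphic reduction with Cousin fiber over a low-dimensional Stein base, splitting after a finite cover, additivity of $d$), but both of the steps you yourself flag as the main obstacles contain genuine gaps, and the mechanisms you propose for them would fail. For solvability, your argument is that Borel--Remmert produces a flag-manifold factor contradicting parallelizability and $d_X \le 2$. This does not work: Borel--Remmert applies only to \emph{compact} homogeneous K\"{a}hler manifolds, and when $S \neq \{e\}$ nothing compact need arise --- the dangerous configuration is a discrete $\Gamma$ whose projection is Zariski dense in $S$, as in $G = SL(2,\mathbb C)\ltimes\mathbb C^2$. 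Moreover, a parallelizable manifold can fiber over a non-parallelizable base (the Hopf fibration already shows this), so a flag-manifold base in some fibration contradicts nothing about $TX$; and $H^2 \neq 0$ of a factor is irrelevant to $d_X \le 2$, which constrains homology only in codimension $\le 2$ from the top. The paper's actual route is unavoidable here: the Berteloot--Oeljeklaus theorem (Theorem \ref{BO}) forces $S \cap \Gamma$ to be algebraic, hence finite, whence $d_{S/S\cap\Gamma} \ge 3$ (Lemma \ref{dthree}); closedness of the radical orbits when $\mathscr{O}(X)=\mathbb C$ and $\Gamma$ lies in no parabolic (Lemma \ref{radicalclosed}, via Zassenhaus--Auslander); Tits' free-subgroup theorem and a Zariski-density argument when $\dim_{\mathbb C} R = 2$ (Lemma \ref{lem8}, Theorem \ref{AG942}); the classification of Theorem \ref{AG941} when $\mathscr{O}(X)\neq\mathbb C$; all assembled by induction on $\dim G$ with Lemma \ref{StimesR} excluding parabolic containment. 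None of this is recoverable from your sketch.

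The splitting step is also wrong as proposed: a holomorphic principal bundle with connected abelian structure group over $(\mathbb C^*)^2$ is \emph{not} classified by a homomorphism $\pi_1(\mbox{base}) \to {\rm Aut}(C)$ --- that describes flat bundles only. By Grauert's Oka principle the holomorphic classification over this Stein base agrees with the topological one, and topologically nontrivial torus-principal bundles over $(\mathbb C^*)^2$ exist; indeed the paper's own Heisenberg example (subsection \ref{non-compact}) is a homogeneous torus bundle over $\mathbb C^*\times\mathbb C^*$ whose total space has non-abelian fundamental group, so no finite cover of it is a product. Thus triviality-after-finite-cover genuinely requires the K\"{a}hler hypothesis, which your step (iv) never uses; the paper spends Proposition \ref{torbdle} on exactly this, treating $J^0$ normal via CRS manifolds and Theorem 6.14 of \cite{GO11} together with an explicit structure-constant computation, and $J^0$ non-normal via the Hochschild--Mostow hull, the nilpotency of the Zariski closure of $\Gamma$ from \cite{GO08}, and the two-ends theorem of \cite{GOR89}. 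Two smaller points: your claim that $\mathscr{O}(F)=\mathbb C$ forces the solvable fiber to be abelian is precisely the Oeljeklaus--Richthofer theorem (Theorem \ref{OeRi88}) --- legitimate if cited, but ``nontrivial commutators manufacture nonconstant functions'' is not an argument in the noncompact case (the Iwasawa manifold is a non-K\"{a}hler counterexample to the slogan); and your enumeration of $A$ from $d_X = d_C + d_A$ is circular, since K\"{u}nneth additivity is available only \emph{after} the splitting --- you must first bound $\dim_{\mathbb C} G/J \le d_{G/J} \le d_X \le 2$ using Serre's vanishing for Stein manifolds together with part 3 of the fibration lemma (Lemma \ref{flemma}), which applies because the base is a solvmanifold.
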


\medskip
The dissertation is organized as follows.

In chapter \ref{bground} we give a brief review of particular known classification results
of $G/H$ some of which are used in our classification. 
In chapter \ref{tools} we introduce some tools to calculate $d_X$ mentioned above. 
We also give some other tools that are used for our classification theorem.
Note that in these chapters $H$ might not be discrete unless otherwise stated.

\medskip
Our main results are collected in chapter \ref{mainresults}.  
While classifying homogeneous K\"{a}hler manifolds $X=G/\Gamma$ with discrete isotropy and $d_X \le 2$,    
we prove that $G$ is solvable.   
This is related to a variant of a question of Akhiezer in \cite{Akh84} that is   
discussed in section \ref{Akhiezerquestion}.

\medskip   
Finally, in chapter 5 we present some projects which can be done in future.    
In the first project we introduce a method that will be used to give the full classification of 
homogeneous K\"{a}hler manifolds with top non-vanishing homology in codimension two, i.e.,
those K\"{a}hler manifolds which might not necessarily have discrete isotropy.
The next project involves complex manifolds that are homogeneous under the holomorphic 
action of real Lie groups.     
Suppose $X = G/H$ where $G$ is a real Lie group and $H$ is a closed subgroup of $G$   
such that $G/H$ has a left invariant complex structure.   
We propose to investigate globalization of the $G$ action when $d_X = 2$.  
The third project investigates extensions of the Akhiezer question in the K\"{a}hler setting.   
And the last one looks at finding some sufficient conditions in order that a holomorphically 
separable complex homogeneous manifold is Stein.


\chapter{Background}\label{bground}

In this chapter we provide background material which we will need in Chapter \ref{mainresults}   
to prove the main theorem.
In section \ref{definition} we {define} K\"{a}hler manifolds and present 
some of their properties and examples.
In section \ref{transformationgroups} we give general information about Lie groups as transformation groups. 
Stein manifolds appear in the classification theorem as fundamental building blocks.
We present some of their properties in section \ref{steinmanifolds}.
In section \ref{attemp} we introduce the classification of homogeneous manifold $G/H$ 
when there is a special restriction on the 
complex Lie group $G$.

\medskip    
One  motivation for Proposition \ref{solvmanifold} is the question of Akhiezer.  
We discuss this in section \ref{Akhiezerquestion}.\;
Let $X=G/\Gamma$ be a homogeneous K\"{a}hler manifold with discrete isotropy such that 
$\Gamma$ is not contained in any proper parabolic subgroup of $G$. 
If there are no non-constant holomorphic functions on $X$, 
then lemma \ref{radicalclosed} states that the radical orbits of $G$ are closed in $X$.
A fact which we will use to prove Theorem \ref{disc}.


\section{K\"{a}hler manifolds}\label{definition}  

In this section we define K\"{a}hler manifolds and give a brief discussion concerning their structure.   
For more information we refer the reader to \cite{HW01}, \cite{Can01} or \cite{KN63}.

A Riemannian metric on a complex manifold $M$ is a covariant tensor field $g$ of degree 2 which satisfies 
$(1)\; g(X,Y)\geq 0$, where $g(X,X)=0$ if and only if $X=0$ and $(2)\;g(X,Y)=g(Y,X)$, 
where $X$ and $Y$ are vector fields on $M$.
An almost complex structure on a manifold $M$ is a tensor field $J$ which is, at every point $m$ of $M$,
an endomorphism of the tangent space $T_m M$ such that $J^2=-1$, where $1$ 
denotes the identity transformation of $T_m M$.
The almost complex structure is complex when it is integrable.   
This means that there exist local complex coordinates on the manifold, i.e.,
an atlas of holomorphically compatible complex charts defining a complex structure on the manifold.   
Note that this is known to be equivalent to the vanishing of the Nijenhuis tensor    
\[  
       N(X,Y) \; := \; [X,Y] \; - \;  J[X,Y] \; + \; J[X,JY] \; + \; J[JX,Y]  ,    
\]  
see \cite{NN57}.   
A Hermitian metric on a complex manifold is a Riemannian metric $g$ which is invariant 
by the almost complex structure $J$, i.e., 
\[
     g(JX,JY) \; = \; g(X,Y) \;\; \mbox{for all vector fields X and Y.}
\]
Note that any paracompact real manifold has a Riemannian metric.
One can always find such metrics locally, and then use an appropriate partition of unity to extend to a global one. 
Given a Riemannian metric $g$ on any almost complex manifold note that $h(X,Y):=g(X,Y)+g(JX,JY)$ 
defines a Hermitian metric on that manifold.
Since $h$ takes its values in $\mathbb C$, we can write 
$h(X,Y) = \mathcal{S}(X,Y) + i \mathcal{A}(X,Y)$. 
Note that $S$ is symmetric and $A$ is alternating, i.e., a 2--form on $X$ called the associated K\"{a}hler form.  
If $d\mathcal{A} = 0$, then $h$ is called a {\bf K\"{a}hler metric}.      

\begin{definition}
A {\bf K\"{a}hler} manifold is a complex manifold equipped with a K\"{a}hler metric.
\end{definition}


\subsection{Properties of K\"{a}hler manifolds}

\begin{itemize}
\item If $M$ and $N$ are K\"{a}hler then $M\times N$ is K\"{a}hler, with the product metric.
\item For any compact K\"{a}hler manifold $X$, 
the odd Betti numbers are even (see page 117, \cite{GH}).
\end{itemize}

\subsection{Examples of K\"{a}hler manifolds} \label{examplekmanifold}
\begin{itemize}
\item $\mathbb C^n$:   A Hermitian metric on $\mathbb C^n$ is given by 
$ds^2=\sum_{j=1}^n dz_j\otimes d\overline{z}_j$.    
\item  This metric induces a metric on the torus $T=\mathbb C^n/\Lambda$.
Here $\Lambda\subset\mathbb C^n$ is a complete lattice acting on $\mathbb C^n$ by translation, 
and the above metric is 
invariant under translations and thus pushes down to the quotient $T$.       
\item ${\mathbb C}{\mathbb P_n}$:
Complex projective space is defined via the following equivalence relation.
For $v, w \in\mathbb C^{n+1}\setminus\{0\}$, we let 
\[
  v\sim w \Longleftrightarrow v=\lambda w \;\;\mbox{for some } \lambda\in\mathbb C^*.
\]

Then we have the holomorphic $\mathbb C^*$--bundle
\[
  \pi: \mathbb C^{n+1}\setminus \{ 0 \} \longrightarrow ( \mathbb C^{n+1}\setminus \{0\}) / \sim \;\cong{\mathbb C}{\mathbb P_n},
\]
where ${\mathbb C}{\mathbb P_n}$ is defined to be the base of this bundle.   
Let $\sigma$ be a local section of $\pi$, i.e., $\pi\circ \sigma={\rm id}$ over an open subset 
$V$ of $\mathbb C\mathbb P_n$.  
The Fubini-Study metric on $V$ is given by 
its associated $(1,1)$ form   
\[    
       \omega \; := \; \frac{\sqrt{-1}}{2}\partial{\bar\partial} \log \|\sigma \|^2.
\]   
Let $\sigma'$ be another local section of the map $\pi$ defined over an open set $W$.  
If $V \cap W \not= \emptyset $, then $\sigma'=g.\sigma$, where $g$ is 
a nowhere vanishing holomorphic function on $V\cap W$. 
Thus we have 
\begin{eqnarray*}    
           \frac{\sqrt{-1}}{2\pi}\partial{\bar\partial}  \log\|\sigma' \|^2 
           & = & \frac{\sqrt{-1}}{2\pi}\partial{\bar\partial}(\log\|\sigma\|^2 \; + \; \log g \; + \;  \log \bar{g}) \\    
            & = & \omega+\frac{\sqrt{-1}}{2\pi}(\partial{\bar\partial} \log g\; -\; {\bar\partial} \partial \log \bar{g})  \\   
            &  =  & \omega   
\end{eqnarray*}   
Since this form is defined locally, and is independent of the section chosen, 
this construction yields a globally defined differential form of ${\mathbb C}{\mathbb P_n}$.
One can also see that this form is positive and closed (see page 31, \cite{GH}), 
so it defines a Hermitian metric on $\mathbb C\mathbb P^n$.  
Thus, the Fubini-Study metric is K\"{a}hler.

\item A complex submanifold of a K\"{a}hler manifold is K\"{a}hler by the induced K\"{a}hler structure.
As a result, any Stein manifold (embedded in $\mathbb C^n$) 
or projective algebraic variety (embedded in $\mathbb C\mathbb P_n$) 
is of K\"{a}hler type.
\item Any Riemann surface is K\"{a}hler.   
As noted above, any complex manifold $X$ admits Hermitian structures and the associated 
Hermitian form $\omega$ is closed for dimension reasons when $\dim_{\mathbb C} X =1$.  
\end{itemize} 


\subsection{Examples of non-K\"{a}hler manifolds}

\subsubsection{Compact Case:}
Any connected, compact, complex, homogeneous manifold which is not 
the trivial product of a flag manifold and a torus
is not a K\"{a}hler manifold (Theorem \ref{torus.flag}).
We define a properly discontinuous action of the discrete group $\mathbb Z$ on
$\mathbb C^2 \setminus \{(0 , 0)\}$ given by 
\[
(z, w)\sim 2^k (z, w)
\]
for all $(z, w)\in\mathbb C^2$ and $k\in\mathbb Z$.     
The complex surface $(\mathbb C^2 \setminus \{(0 , 0)\})/\sim$  
is compact and diffeomorphic to $\mathbb S^3\times\mathbb S^1$         
and is called the Hopf surface. 
Since the Hopf surface is not the product of a flag manifold and a torus, 
it follows from the Theorem of Borel--Remmert \cite{BR62} that the 
Hopf surface does not admit a K\"{a}hler metric.

\subsubsection{Non-compact Case:}  \label{non-compact}
Consider the three-dimensional Heisenberg group
$$   
   G \; := \;  \left\{
\left( \begin{array}{ccc}
1 & x &z  \\
0 & 1& y  \\
0&0&1
 \end{array} \right)
: x, y, z\in\mathbb C 
\right\}
$$
and consider the discrete subgroup 
$$
\Gamma:=
\left\{
\left( \begin{array}{ccc}
1 & m &k+il  \\
0 & 1& n  \\
0&0&1
 \end{array} \right)
: n, m, l, k\in\mathbb Z 
\right\}
.$$ 
The center of $G$ is the subgroup 
$$
    C \; := \; \left\{    \left( \begin{array}{ccc}
         1 & 0 &z  \\
         0 & 1& 0  \\
      0 & 0 & 1   \end{array} \right)  :  z\in\mathbb C  \right\}       
.$$
By a simple calculation one can see that the fibration $G/\Gamma\to G/C.\Gamma$ realizes $G/\Gamma$ as 
a torus bundle over $\mathbb C^*\times\mathbb C^*$.

Let $G_0$ be the real Lie subgroup
$$
 G_0:=
 \left\{
 \left( \begin{array}{ccc}
  1 & x &z  \\
  0 & 1& y  \\
  0 & 0 & 1
 \end{array} \right)
  : x, y\in\mathbb R, z\in\mathbb C 
 \right\}
$$ 
that contains $\Gamma$ cocompactly.   
Let $\mathfrak m:=\mathfrak{g}_0\cap i\mathfrak{g}_0 =\mathfrak{g}'_0\cap i\mathfrak{g}'_0$,
and so $\mathfrak{g}'_0\cap \mathfrak m\neq (0)$, where $\mathfrak{g}_0$ is the Lie algebra of $G_0$, 
and $\mathfrak{g}'_0$ is its commutator group. 
It has been proven that if a nilmanifold $G/\Gamma$ has the above form is K\"{a}hler, 
then $\mathfrak m\cap \mathfrak{g}'_0=(0)$  (see Remark 4 (b) and Example 6 (b) in  \cite{OR87} for more details).    
Since this is not so here, $G/\Gamma$ is not K\"{a}hler.


\section{Lie groups as transformation groups on complex manifolds} \label{transformationgroups}

A complex Lie group is a set with the structure of both a complex manifold and a group 
such that the group operations are holomorphic.  
Since the sum of two solvable ideals is a solvable ideal, every complex Lie algebra contains 
a maximal solvable ideal called the radical \index{radical}.  
The corresponding connected subgroup is closed and in the case of complex Lie groups is complex.    
One sees this at the Lie algebra level, since if $\mathfrak r$ is a solvable ideal in the complex 
Lie algebra $\mathfrak g$, then a calculation shows that $\mathfrak r + i \mathfrak r$ is also a 
solvable ideal in $\mathfrak g$.  
Since the radical is maximal with these properties, one has $\mathfrak r = \mathfrak r + i \mathfrak r$ 
and thus $\mathfrak r$ is a complex ideal.   
In addition, the quotient of the complex Lie algebra $\mathfrak g$ by the complex ideal $\mathfrak r$ is 
a {\bf complex} Lie algebra say $\mathfrak s$ which is semisimple (see theorem below).
The following result was first proved by Levi for Lie algebras, but the corresponding statement for 
Lie groups is a consequence of his result.


\begin{theorem}[Levi-Malcev theorem]\cite{Lev05}\label{Levimalcev}
Let $G$ be a simply connected, connected Lie group with radical $R$. 
Then there is a closed semi-simple subgroup $S$ in $G$
so that $G$ is the semi-direct product $G=S\ltimes R$. 
The group $S$ is unique up to conjugation.
If $G$ is complex, then so are $R$ and $S$.
\end{theorem}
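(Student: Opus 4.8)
The plan is to reduce the group statement to the corresponding statement for Lie algebras (Levi's theorem) and then to integrate, invoking the simple connectivity of $G$ at the one point where it is essential. Write $\mathfrak{g}$ for the Lie algebra of $G$ and $\mathfrak{r}$ for its radical, so that $\mathfrak{r}$ is the Lie algebra of the closed connected radical subgroup $R$ and $\mathfrak{s} := \mathfrak{g}/\mathfrak{r}$ is semisimple, exactly as in the discussion preceding the statement.

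First I would establish the splitting at the Lie algebra level: the short exact sequence $0 \to \mathfrak{r} \to \mathfrak{g} \to \mathfrak{s} \to 0$ admits a Lie algebra section, whose image is a \emph{Levi subalgebra} $\mathfrak{s}_0 \subset \mathfrak{g}$ projecting isomorphically onto $\mathfrak{s}$. I would argue by induction on the derived length of $\mathfrak{r}$ in order to reduce to the case in which $\mathfrak{r}$ is abelian; regarding $\mathfrak{r}$ as an $\mathfrak{s}$-module through the adjoint action, the obstruction to splitting then lies in $H^2(\mathfrak{s},\mathfrak{r})$, which vanishes by Whitehead's second lemma since $\mathfrak{s}$ is semisimple. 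This yields a vector-space decomposition $\mathfrak{g} = \mathfrak{s}_0 \oplus \mathfrak{r}$ with $\mathfrak{s}_0$ a subalgebra.

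Next I would integrate. Let $\widetilde{S}$ be the simply connected group with Lie algebra $\mathfrak{s}_0$, and let $\varphi : \widetilde{S} \to G$ integrate the inclusion $\mathfrak{s}_0 \hookrightarrow \mathfrak{g}$, which exists because $\widetilde{S}$ is simply connected. From the homotopy exact sequence of the fibration $R \to G \to G/R$ one sees that $G/R$ is connected, simply connected and semisimple, so the composite $\pi \circ \varphi : \widetilde{S} \to G/R$ with the quotient map $\pi : G \to G/R$, which induces the Lie algebra isomorphism $\mathfrak{s}_0 \xrightarrow{\sim} \mathrm{Lie}(G/R)$, is itself an isomorphism. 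Hence $\varphi$ is injective, $S := \varphi(\widetilde{S})$ satisfies $S \cap R = \{e\}$ and $SR = G$, and the multiplication map $S \times R \to G$ is a bijective local diffeomorphism, hence a diffeomorphism. This simultaneously shows that $S$ is closed and that $G = S \ltimes R$. I expect the closedness of $S$ to be the main obstacle: a connected subgroup integrating a subalgebra is closed only under extra hypotheses, and here it is precisely the simple connectivity of $G$ -- forcing $G/R$ to be simply connected -- that makes $\pi|_S$ an isomorphism and thereby pins $S$ down as a closed subgroup.

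Finally, for uniqueness up to conjugation I would invoke the Malcev--Harish-Chandra refinement at the algebra level: any two Levi subalgebras are interchanged by an inner automorphism $\exp(\mathrm{ad}\, n)$ with $n$ in the nilradical (this rests on the vanishing of $H^1(\mathfrak{s},\cdot)$), and exponentiating such an $n$ inside $G$ conjugates the associated subgroups. For the complex assertion I would note that when $\mathfrak{g}$ is a complex Lie algebra the ideal $\mathfrak{r}$ is already complex by the remark preceding the statement, $\mathfrak{s}$ is complex semisimple, and every construction in the splitting step is $\mathbb{C}$-linear, Whitehead's lemmas holding verbatim over $\mathbb{C}$; thus $\mathfrak{s}_0$ may be taken to be a complex subalgebra, and the connected subgroups $R$ and $S$ are correspondingly complex.
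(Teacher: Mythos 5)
Your proposal is correct and takes exactly the route the paper intends: the paper offers no proof of Theorem \ref{Levimalcev}, merely citing Levi and remarking that the group statement is a consequence of the Lie-algebra result, and your argument (Levi decomposition via reduction to abelian radical and Whitehead's second lemma, integration of the Levi subalgebra, Malcev conjugacy for uniqueness, and $\mathbb{C}$-linearity of the construction for the complex case) is the standard way of fleshing out that remark. In particular, your identification of the closedness of $S$ as the point where simple connectivity of $G$ enters, namely that $\pi_1(G)=0$ forces $\pi_1(G/R)=0$ so that $\pi\circ\varphi:\widetilde{S}\to G/R$ is an isomorphism and pins $S$ down as the image of a section of $\pi$, is exactly right.
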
  

If the Lie group is not simply connected, we may have to consider the universal covering group of $G$ to apply 
the theorem. 
For example $GL_n (\mathbb C)=\mathbb C^*.SL_n (\mathbb C)$ as Lie groups, i.e., $G=R.S$, but
$R\cap S\neq (e)$.   

\subsection{Lie groups as transformation groups}  

A holomorphic left $G$--action of a complex Lie group $G$ on a complex manifold $X$ is a holomorphic map
$\phi:G\times X\to X$. 
For $g\in G$ and $x\in X$, we write $g.x$ instead of  $\phi(g,x)$ for convenience.       
Note that $g:X\to X$ is holomorphic for any fixed $g\in G$.
We say that $G$ is acting as a Lie transformation group on $X$ if $\phi$ in addition has the following two properties:
\begin{description}
\item 1) $gg'.x=g.(g'.x)$ for all $g, g'\in G$ and $x\in X$.
\item 2) $e.x=x$ for all $x\in X$. Here by $e$ we mean the identity element of $G$.
\end{description}

We call a $G$-action on X {\bf effective} if it has an additional property that whenever $g.x=x$ for all $x\in X$, then $g=e$.
By a $G$ orbit $G.x$ of a point $x$ we mean the set $G.x=\{g.x:g\in G\}$. 
For a subset $M\subseteq X$ the stabilizer $H$ of $M$ in $G$ is a {\bf subgroup} of $G$ defined by
\[
    H \; = \; {\rm Stab}_{G}(M) \; := \; \{ \ g\in G \ | \ g.M \; \subseteq \; M \ \}  .    
\]
Furthermore if $M$ is closed, then $H$ is closed, and so is a Lie subgroup of $G$.   
If $M$ is a closed complex analytic subvariety and the $G$-action is holomorphic with $G$ a complex Lie group, 
then $H$ is a {\bf complex Lie subgroup} of $G$.
Here $M$ can also be a point, in which case we denote the stabilizer group by $H_x$.
The map $gH_x\mapsto g.x$ establishes an identification between the left coset    
$G/H_x=\{gH_x: g\in G\}$ and the orbit $G.x$. 
In general if $H$ is a closed complex subgroup of the complex Lie group $G$ 
then we have the holomorphic principal bundle 
$H\to  G\to G/H$, where $G/H$ has a complex analytic quotient structure.

The ineffectivity of the holomorphic action of $G$ on $X$ 
is the normal closed complex Lie subgroup $J = \cap_{x\in X} H_x$.
This will gives us a natural action of $G/J$ on $X$ defined by $gJ.x:=g.x$.
The $G$--action is called {\bf almost effective} if the ineffectivity of the $G$--action is discrete.

{\bf Note that from now on all our group actions are almost effective, unless otherwise stated.}   
The reason for this assumption is the following.
The identity component $J^0$ of $J$ is a normal subgroup of $G$ contained in $H$.
One can replace $G$ with $\hat{G}:=G/J^0$ and $H$ with $\hat{H}=H/J^0$ in the homogeneous manifold $X:=G/H$ and 
again get the same manifold since $X=\hat{G}/\hat{H}$ but with an almost effective action of $\hat{G}$ instead.


\subsection{Facts about discrete isotropy} 

We begin with some facts about complex homogeneous manifolds of the form 
$G/\Gamma$, where $\Gamma$ is a discrete subgroup of the complex Lie group $G$.  
These will be need later.   

Suppose $X$ is a connected complex manifold with a $G$--action of a complex Lie group $G$ on $X$    
given by the holomorphic map $\phi( ., . ): G\times X \to X$.    
For any $v\in \mathfrak g$, let $<v>_{\mathbb C}=\{tv: t\in\mathbb C\}$ be the complex span of $v$ and     
$\exp (<v>_{\mathbb C})$ be the corresponding one parameter subgroup of $G$.   
For $x\in X$ the (complex) derivative at the point $x$ defines a {\bf fundamental vector field} by       
\[   
    \xi_v (x) \; := \; \frac{d}{dt} \left|_{t=0} \phi(\exp\ tv , x)    \right.   .   
\]   
The kernel of the map $v \to \xi_v$ is the Lie algebra of the 
isotropy subgroup of the $G$--action at the point $x$.      
If $X = G/\Gamma$ with $\Gamma$ discrete, then the map $v \mapsto \xi_v$ is injective.   
So in the setting of discrete isotropy a basis of $\mathfrak g$ gives us a corresponding frame on $X$ consisting of   
$n := \dim_{\mathbb C} X$ linearly independent holomorphic vector fields at every point,  
i.e., $X$ is (complex) parallelizable.    

\begin{lemma} [\cite{Wan54}]   \label{Wang}  
Let $X$ be a connected compact  parallelizable complex manifold.   
Then $X = G/\Gamma$, where $G$ is a connected complex Lie group and  
$\Gamma$ is a discrete subgroup of $G$.
\end{lemma}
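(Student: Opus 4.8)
The plan is to recover the group $G$ directly from the parallelizable structure, by showing that the given global holomorphic frame spans a finite-dimensional complex Lie algebra $\mathfrak g$ of holomorphic vector fields whose associated simply connected group acts transitively on $X$ with discrete isotropy. The single analytic input I would use repeatedly is that a holomorphic function on the compact connected manifold $X$ is constant.

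First I would fix a global holomorphic frame $\xi_1,\dots,\xi_n$ of the holomorphic tangent bundle, which exists since $X$ is parallelizable, together with its dual holomorphic coframe $\omega_1,\dots,\omega_n$. Each $\omega_i$ is a holomorphic $1$-form, so $d\omega_i$ is a holomorphic $2$-form and can be written as $d\omega_i = \sum_{j<k} c^i_{jk}\,\omega_j\wedge\omega_k$ with holomorphic coefficient functions $c^i_{jk}$ on $X$; by compactness and connectedness these are constant. By the Maurer--Cartan formula, $d\omega_i(\xi_j,\xi_k) = -\omega_i([\xi_j,\xi_k])$ since $\omega_i(\xi_k)$ is constant, so dually $[\xi_j,\xi_k] = \sum_i a^i_{jk}\,\xi_i$ with constant coefficients $a^i_{jk}$. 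Hence the complex span $\mathfrak g := \langle \xi_1,\dots,\xi_n\rangle_{\mathbb C}$ is closed under bracket, and since the $\xi_i$ are honest vector fields the Jacobi identity makes the $a^i_{jk}$ structure constants of a finite-dimensional complex Lie algebra.

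Next I would integrate $\mathfrak g$ to a group action. Because $X$ is compact, every holomorphic vector field on $X$, and in particular every element of $\mathfrak g$, is complete, so its flow is globally defined; by Lie's theorems the resulting infinitesimal action of $\mathfrak g$ by complete fields exponentiates to a global holomorphic action of the simply connected complex Lie group $G$ with Lie algebra $\mathfrak g$. The fundamental vector fields $\xi_v$ of this action are precisely the elements of $\mathfrak g$, so they span the holomorphic tangent space at every point. Consequently every $G$-orbit is open, whence by connectedness of $X$ the action is transitive; and, as recorded just before the statement, the kernel of $v\mapsto \xi_v$ is the isotropy algebra, which is therefore trivial, so the isotropy group $\Gamma$ is discrete. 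Identifying $X$ with $G/\Gamma$ then finishes the proof.

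I expect the main obstacle to be the integration step, that is, promoting the formal bracket relations to an honest action of a global complex Lie group. The constancy of the structure constants is immediate once the maximum principle is invoked, but one must check carefully that completeness of the frame fields (which is exactly what compactness of $X$ buys) lets the local flows generated by $\mathfrak g$ be assembled into a single globally defined holomorphic $G$-action, and that simple connectivity of $G$ makes this action well defined. This is the one point where the hypothesis of compactness is genuinely essential.
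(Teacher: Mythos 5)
Your proposal is correct and follows essentially the same route as the paper: use constancy of holomorphic functions on the compact connected manifold $X$ to show the frame fields span a finite-dimensional complex Lie algebra $\mathfrak g$, integrate (via completeness, which compactness guarantees) to a holomorphic action of the associated simply connected complex Lie group, and conclude transitivity from open orbits and discreteness of the isotropy for dimension reasons. The only cosmetic difference is that you verify closure under bracket through the dual coframe and the Maurer--Cartan identity, whereas the paper expands an arbitrary holomorphic vector field $v=\sum_i f_i X_i$ in the frame and observes the $f_i$ are constant, which yields the same conclusion (and additionally identifies $\mathfrak g$ with the full algebra of holomorphic vector fields) in one step.
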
    

\begin{proof}   
Let $\dim X = n$. 
Since $X$ is parallelizable,   
there are $n$ linearly independent holomorphic vector fields $X_1,\cdots,X_n$ on $X$ that
generate the tangent space $T_x X$ at any point $x\in X$.
Denote by $\mathfrak g$ the Lie algebra of all holomorphic vector fields on $X$.
For $v\in\mathfrak g$, there are holomorphic functions $f_1,\cdots,f_n \in \mathscr{O}(X)$ such that 
$v=\sum_{i=1}^n f_i X_i$.   
Every $f_i$ is constant, since $X$ is compact.       
So $\mathfrak g= \langle X_1,\cdots,X_n \rangle_{\mathbb C}$ is an $n$--dimensional complex Lie algebra.     

\medskip   
The holomorphic vector fields on any complex manifold $X$
give rise to local holomorphic 1--parameter groups.  
In general, a similar statement about global 1--parameter groups does not hold.      
However, for $X$ compact, every holomorphic vector field on $X$ can be globally integrated to a 
holomorphic 1--parameter group of $X$.           
As a consequence, the  connected, simply connected, complex Lie group $G$ 
associated to the Lie algebra $\mathfrak g$ acts holomorphically on the compact manifold $X$.  
The $G$--orbit of every point $x\in X$ is open because the vector fields generating $\mathfrak g$ 
also generate $T_x(X)$.  
Since $X$ is connected, $G$ is transitive on $X$.   
For dimension reasons the isotropy of the $G$--action on $X$ is a discrete subgroup $\Gamma$ of $G$.   
Hence $X$ is biholomorphic to $G/\Gamma$.
\end{proof}


\section{Stein manifolds}  \label{steinmanifolds}
\begin{definition}  
A complex manifold $X$ is a Stein manifold if the following two conditions hold:  
\begin{enumerate}  
\item $X$ is holomorphically separable, i.e., for two distinct points $x$ and $y$ in $X$, 
there exists a holomorphic function $f\in\mathscr{O}(X)$ such that $f(x)\neq f(y)$.   
\item  $X$ is holomorphically convex, i.e., for any compact subset $K\subseteq X$,
the holomorphic convex hull 
$$
        \hat{K} \; := \; \{x\in X: |f(x)|\leq \sup_K |f|\;\; \mbox{ for all  } f    \in\mathscr{O}(X)\}
$$ 
is again a compact subset of $X$.
\end{enumerate}  
\end{definition}  

Here we give some well known examples:

\begin{example} 
\begin{enumerate}  
\item $\mathbb C^n$ is Stein.
\item  A domain $\Omega\subseteq \mathbb C^n$ is called a domain of holomorphy 
if there do not exist non-empty open sets $\Omega_1 \subset \Omega$ and $\Omega_2 \subset {\mathbb{C}}^n$, 
where $\Omega_2$ is connected, 
$\Omega_2 \not\subset \Omega$ and $\Omega_1 \subset \Omega \cap \Omega_2$ 
such that for every holomorphic function $f$ on $\Omega$ 
there exists a holomorphic function $g$ on $\Omega_2$ with $f = g$ on $\Omega_1$.    
A connected, open set in $\mathbb C^n$ is a domain of holomorphy if and only if it is holomorphically convex. 
Domains of holomorphy in $\mathbb C^n$ are Stein.    
\item Non--compact Riemann surfaces are Stein \cite{BS49}.  
\item Every closed complex submanifold of a Stein manifold is Stein. 
\item The Cartesian product of two Stein manifold is a Stein manifold. 
\item Suppose $X \to Y$ is a covering map with $X$ Stein.
If the fiber of this covering map is finite, then $Y$ is Stein \cite{Ste56}.  
\end{enumerate}  
\end{example}     

\bigskip 
\begin{theorem} [Embedding theorem]\cite{Rem56}
Let $X$ be a Stein manifold. 
Then there exists a proper  biholomorphic map of $X$ 
onto a closed complex submanifold of some $\mathbb C^N$.
\end{theorem}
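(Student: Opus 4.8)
The plan is to build the embedding in two stages—first a proper injective holomorphic immersion $F\colon X \to \mathbb C^{M}$ for some finite (possibly large) $M$, then a generic-projection reduction of the target down to $\mathbb C^{N}$—with the function-theoretic input of the Stein hypothesis (Cartan's Theorems A and B for coherent analytic sheaves) as the main engine. Write $n = \dim_{\mathbb C} X$ and $\mathscr O(X)$ for the algebra of global holomorphic functions. The two Stein conditions in the definition above feed the two features I need: holomorphic convexity will force properness, while holomorphic separability, upgraded by Cartan's theorems, will force injectivity and immersivity.

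First I would produce properness from holomorphic convexity. Exhaust $X$ by compact sets $K_1 \subset K_2 \subset \cdots$ with $\bigcup_j K_j = X$; replacing each $K_j$ by its holomorphically convex hull $\hat K_j$ (still compact, by the definition above) and passing to a subsequence I may assume $K_j = \hat K_j$ and $K_j \subset \mathrm{int}\, K_{j+1}$. On each shell $K_{j+1}\setminus \mathrm{int}\, K_j$, holomorphic convexity lets me choose finitely many functions in $\mathscr O(X)$ whose moduli are large there; a suitably rapidly convergent combination then yields a finite tuple $g = (g_1,\dots,g_k)$ whose modulus tends to $\infty$ along every sequence that eventually leaves each $K_j$. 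Any holomorphic map whose component functions include this tuple is automatically proper.

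Next I would arrange injectivity and immersivity. Separability gives, for each pair $p \neq q$, a function in $\mathscr O(X)$ separating them; to obtain a single globally injective map and to span every cotangent space I invoke Cartan's Theorem~A, applied to the ideal sheaf of a point and to the square of the maximal ideal: it guarantees global sections whose differentials at any prescribed point span $T^*_pX$, and sections separating any prescribed pair of points. Combining these—first over the compact pieces $K_j$, where injectivity and the immersion condition are open and hence achieved by finitely many functions, and then patching across the exhaustion by a careful inductive choice to keep the total number finite—I assemble finitely many functions which, together with the properness-forcing tuple $g$, give a proper injective holomorphic immersion $F \colon X \to \mathbb C^{M}$. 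Since a proper continuous injection into a locally compact Hausdorff space is a homeomorphism onto its (closed) image, $F(X)$ is a closed complex submanifold biholomorphic to $X$.

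Finally I would reduce $M$ to $N = 2n+1$ by a transversality argument. For $M \ge 2n+2$, a generic linear projection $\pi\colon \mathbb C^{M}\to \mathbb C^{M-1}$ keeps $\pi\circ F$ injective, immersive, and proper: the \emph{secant directions} $[z-w]$ with $z\neq w$ in $F(X)$ form the image of $X\times X$ in $\mathbb P^{M-1}$, of real dimension at most $4n$; the \emph{tangent directions} form the image of $\mathbb P(TX)$, of real dimension at most $4n-2$; and the \emph{asymptotic secant directions} controlling properness of the closed set $F(X)$ again have real dimension at most $4n$. All three are less than $\dim_{\mathbb R}\mathbb P^{M-1}=2(M-1)$ precisely when $M \ge 2n+2$, so by Sard's theorem a generic direction avoids all of them and $\pi\circ F$ is still a proper injective immersion, now into $\mathbb C^{M-1}$. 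Iterating lowers the target to $\mathbb C^{2n+1}$, giving the asserted proper biholomorphism onto a closed complex submanifold. I expect the genuine obstacle to be the existence of enough global functions in the second stage—the Oka--Cartan machinery resting on the vanishing of sheaf cohomology on Stein manifolds—whereas the exhaustion and the projection steps are comparatively soft.
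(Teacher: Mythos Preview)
The paper does not prove this theorem; it is stated as a background result with a citation to Remmert \cite{Rem56} and is used only to observe that Stein manifolds inherit a K\"{a}hler structure from $\mathbb C^N$. So there is nothing in the paper to compare your argument against.

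That said, your outline is the classical Remmert--Bishop--Narasimhan strategy and is essentially sound. The one place where you wave hands a bit too quickly is the phrase ``patching across the exhaustion by a careful inductive choice to keep the total number finite.'' As you move up the exhaustion $K_1\subset K_2\subset\cdots$, the number of functions needed to separate points and span cotangent spaces on $K_j$ may grow without bound, so the na\"{\i}ve union is infinite. The standard remedy---either Bishop's inductive perturbation argument, or Narasimhan's scheme of correcting a given almost-embedding by small additions controlled via Oka--Weil approximation on $\hat K_j$---is exactly what makes this step work, and it is where the real analytic content lies; your parenthetical acknowledgement that this is ``the genuine obstacle'' is on target, but in a written proof you would need to spell out the induction. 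The generic-projection reduction to $\mathbb C^{2n+1}$ is fine; one small point is that controlling properness under projection requires a little more than the secant/tangent count (one must also rule out directions along which $F(X)$ has unbounded preimages under $\pi$), and you handle this with the ``asymptotic secant directions,'' which is the right idea though again somewhat compressed.
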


\begin{remark}  
Any Stein manifold is also a K\"{a}hler manifold. 
One can pull back  a K\"{a}hler form on $\mathbb C^N$ by using a proper 
biholomorphic map given by the embedding theorem cited above.
\end{remark}  
 
\subsubsection{Holomorphic bundles over Stein manifolds} 
If $X\to B$ is a holomorphic covering space and the base $B$ is Stein, then $X$ is Stein \cite{Ste56}.
This is also true for holomorphic vector bundles, 
but might fail for fiber bundles with fibers $\mathbb C^n$, $n>1$, 
and with a nonlinear transition group;   
see for example Skoda \cite{Sk77} for a $\mathbb C^2$ bundle over 
an open subset in $\mathbb C$ whose total space is not Stein.          

Here we present a version of the Grauert--Oka principle which we will use in the main theorem.

\begin{theorem} [Grauert--Oka principle, Satz 6 in \cite{Gr58}] \label{GOka}  
Let $X \xrightarrow{F} B$ be a holomorphic fiber bundle, where $F$ is a complex manifold 
and $B$ is a Stein complex manifold.
Assume the group of the bundle is a (not necessarily connected) complex Lie group.    
If the bundle $X$ is continuously homotopic to another bundle $\widetilde{X}$ over the Stein base $B$, 
then it is holomorphically homotopic to $\widetilde{X}$. 
In particular, if the bundle $\widetilde{X}$ is topologically trivial, then it is holomorphically trivial.
\end{theorem}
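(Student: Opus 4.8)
The plan is to reduce the statement to the comparison of holomorphic and continuous non-abelian first cohomology of $B$ with coefficients in the structure group. Fix the complex Lie group $G$ that serves as the group of the bundle, and let $\mathcal{O}^G$ (respectively $\mathcal{C}^G$) denote the sheaf on $B$ of germs of holomorphic (respectively continuous) maps into $G$. A holomorphic $G$-bundle is classified up to holomorphic isomorphism by a class in the pointed set $H^1(B,\mathcal{O}^G)$, and its underlying topological bundle by the image under the natural forgetful map $\iota\colon H^1(B,\mathcal{O}^G)\to H^1(B,\mathcal{C}^G)$. The assertion that a holomorphic bundle which is continuously homotopic to $\widetilde{X}$ is already holomorphically homotopic to it is exactly the \emph{injectivity} of $\iota$ (its \emph{surjectivity}, not needed here, would give the existence of holomorphic models), and the final sentence is the special case in which $\widetilde{X}$ is trivial. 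So I would first translate ``continuously/holomorphically homotopic'' into the language of $G$-valued transition cocycles and reduce to showing that two holomorphic cocycles which are continuously cohomologous are already holomorphically cohomologous.

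Second, I would set up the analytic machinery on the Stein base. Because $B$ is Stein, it carries a strictly plurisubharmonic exhaustion and hence an exhaustion $B=\bigcup_\nu B_\nu$ by relatively compact, Runge, strictly pseudoconvex domains with $B_\nu\Subset B_{\nu+1}$. Over such pieces the linearized problem is governed by the coherent sheaf $\mathcal{O}(\mathfrak{g})$ attached to the Lie algebra of $G$, and Cartan's Theorems A and B guarantee that the relevant additive Cousin problems are solvable and that holomorphic sections over $B_\nu$ can be Runge-approximated by sections over $B$. This is the input that lets one pass from the infinitesimal, Lie-algebra-valued (hence abelian) deformation problem to the genuine group-valued problem by means of the exponential map of $G$.

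Third, I would run Grauert's bumping-and-approximation scheme. Starting from the given continuous isomorphism (a continuous $G$-valued cochain trivializing the difference cocycle), I would deform it to a holomorphic one by an induction over the exhaustion: at each stage one enlarges the domain by a small strictly pseudoconvex ``bump,'' and over that bump one replaces a holomorphic trivialization by a new one agreeing, up to a small correction, with the trivialization already built on the old piece. The correction is solved by writing the ratio of the two trivializations as $\exp$ of a small $\mathfrak{g}$-valued holomorphic datum and invoking Theorems A and B together with Runge approximation; a homotopy parameter is carried along throughout, so that the continuous and the holomorphic data are joined by a holomorphic homotopy. Iterating and controlling the errors so that the infinite sequence of patchings converges produces the desired holomorphic homotopy, and taking $\widetilde{X}$ topologically trivial yields holomorphic triviality.

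The main obstacle is precisely the non-abelian, and possibly disconnected, nature of $G$. Since $G$ need not be abelian, there is no long exact cohomology sequence to exploit, and the successive corrections compose multiplicatively rather than additively; keeping the iteration convergent therefore demands quantitative control through exponential (Baker--Campbell--Hausdorff) coordinates near the identity component, so that each correction stays small enough that the group products do not amplify the error. The disconnectedness of $G$ contributes a discrete invariant in $H^1(B,\pi_0 G)$ which must be matched at the outset; but this datum is purely topological and is already fixed by the hypothesis that the two bundles are continuously homotopic, so the analytic deformation can be arranged to preserve it. Organizing the double induction, over the exhaustion and along the homotopy, with uniform estimates at every step, is the genuinely delicate part of the argument.
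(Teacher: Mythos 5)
The paper gives no proof of this theorem at all: it is imported verbatim as Satz~6 of Grauert's paper \cite{Gr58}, so the only meaningful comparison is with the cited source. Your outline --- translating the statement into the injectivity of the natural map $H^1(B,\mathcal{O}^G)\to H^1(B,\mathcal{C}^G)$ of non-abelian cohomology sets, exhausting the Stein base by Runge strictly pseudoconvex domains, linearizing via the exponential map so that Cartan's Theorems A and B and Runge approximation solve the small corrections, and running Grauert's bumping induction with a homotopy parameter and quantitative (Baker--Campbell--Hausdorff) control, with the $\pi_0(G)$-invariant fixed topologically at the outset --- is a correct sketch of essentially the same argument Grauert gives, so the proposal matches the cited proof in approach.
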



\begin{theorem}[Satz 7 in \cite{Gr58}]   \label{Grauert}
Let $X$ be an analytic fiber bundle over a non-compact Riemann surface $Y$ 
with a connected complex Lie group as its structure group.
Then $X$ is analytically trivial.
\end{theorem}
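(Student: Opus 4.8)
The plan is to reduce analytic triviality to \emph{topological} triviality and then invoke the Oka--Grauert principle. Since $Y$ is a non-compact Riemann surface, it is Stein (as noted above), so the hypotheses of Theorem \ref{GOka} are satisfied provided we know that $X$, regarded merely as a topological bundle, is trivial. In other words, the whole problem reduces to the purely topological statement that a fiber bundle over $Y$ with connected structure group $G$ is topologically trivial; Theorem \ref{GOka} then automatically upgrades this to analytic triviality, since a topologically trivial bundle is in particular homotopic to the trivial bundle.

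To establish topological triviality I would pass to the classification of bundles by homotopy classes of maps into a classifying space. The given fiber bundle, having structure group $G$, is associated to a principal $G$-bundle, and isomorphism classes of principal $G$-bundles over a paracompact base $Y$ are in natural bijection with $[Y, BG]$, the set of homotopy classes of maps $Y \to BG$. The bundle is trivial exactly when its classifying map is null-homotopic, so it suffices to show that $[Y, BG]$ is a single point.

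The two ingredients that collapse $[Y,BG]$ are the homotopy type of $Y$ and the connectedness of $G$. First, every open (non-compact) Riemann surface is homotopy equivalent to a one-dimensional CW complex, namely a bouquet of circles. Second, the classifying space satisfies $\pi_1(BG)\cong\pi_0(G)$, so the connectedness of $G$ forces $BG$ to be simply connected. Now obstructions to null-homotoping a map from a one-dimensional complex $K$ into $BG$ live in $H^1(K;\pi_1(BG))$, which vanishes because $\pi_1(BG)=0$; equivalently, a map out of a bouquet of circles is determined up to homotopy by the induced homomorphism on $\pi_1$, which is necessarily trivial when the target is simply connected. Hence every such map is null-homotopic, and every $G$-bundle over $Y$ is topologically trivial.

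Combining these observations, the classifying map of $X$ is null-homotopic, so $X$ is topologically trivial; Theorem \ref{GOka} then yields that $X$ is analytically (holomorphically) trivial. The main obstacle is the topological classification step: one must correctly marshal the facts that open Riemann surfaces retract onto a wedge of circles, that $\pi_1(BG)\cong\pi_0(G)$, and that bundles are classified by maps into $BG$. Once these are assembled, the analytic conclusion is immediate from the Oka--Grauert principle, and no further analysis of the transition functions is required.
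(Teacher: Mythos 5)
Your argument is correct and follows essentially the same route as the source: the paper imports this statement without proof as Satz~7 of \cite{Gr58}, and Grauert's own deduction is precisely yours --- topological triviality of bundles with connected structure group over an open Riemann surface (Stein by \cite{BS49}, homotopy equivalent to a wedge of circles, so the classifying map to the simply connected space $BG$, with $\pi_1(BG)\cong\pi_0(G)=0$, is null-homotopic), upgraded to holomorphic triviality by the Oka--Grauert principle (Theorem~\ref{GOka}). There are no gaps; in particular your classifying-space step survives surfaces of infinite topological type, since the only obstruction lives in $H^1(Y;\pi_1(BG))=0$.
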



\begin{remark}
We later use a classical result of Serre concerning the vanishing of certain homology   
groups of Stein manifolds \cite{Ser53} (footnote, p. 59) stating that     
if $X$ is an $n$--dimensional Stein manifold,
then 
\[
      H_k (X, \mathbb Z_2) \; = \; 0
\]
for all $k > n$, where by $H_k (X, Z_2)$ we mean the $k$th homology group of $X$ with coefficients in $\mathbb Z_2$.
\end{remark}


\subsection{Lie's Flag theorem}  

A particular example of a Stein manifold as a homogeneous manifold is given by the orbits of   
a complex linear solvable Lie group 
$G$ acting holomorphically on $\mathbb P_n$.   
By Lie's Flag theorem (see \S 4.1 in \cite{Hum72}) there is a full flag 
\[ 
    \{ x \} \; \subsetneq \; L_1 \; \subsetneq \; L_2 \; \subsetneq \; \ldots \; \subsetneq \; L_n \; = \; \mathbb P_n    
\]  
that is stabilized by the $G$ action on $\mathbb P_n$.  
Suppose $X := G.y$ is some orbit of the group $G$ in $\mathbb P_n$ and set 
$k := \min \{ \ m \ | \ X \cap L_m \not= \emptyset \ \}$.    
Without loss of generality we may assume that $y \in L_k$ and thus $X \subset L_k$, since $L_k$ is 
$G$--invariant.      
Thus    
\[  
     X \; = \;  X \cap L_k \; \subset \; L_k \setminus L_{k-1} \; \simeq \; \mathbb C^m .     
\]    
As a consequence, $X$ is holomorphically separable.   
In this setting Snow proved  (Theorem 3.3, \cite{Sno85}) that $X$ is 
biholomorphic to $(\mathbb C^*)^s \times \mathbb C^t$ and thus that $X$ is Stein.


\section{Toward the classification of homogeneous manifolds}\label{attemp}

Let $G$ be a complex Lie group and $H$ a closed complex subgroup.
Borel and Remmert \cite{BR62} classified compact connected homogeneous K\"{a}hler manifolds 
(see also Theorem \ref{torus.flag}).
They showed that any such manifold is the direct product of a complex torus and a flag manifold. 
It is not realistic to classify non-compact homogeneous K\"{a}hler manifolds $G/H$ without any restriction. 

\medskip   
In this section we present helpful results when there is a condition on $G$. 
We will use these results in chapter \ref{mainresults}. 
Note that we are interested in applying the topological restriction $d_X\leq 2$ which we will introduce in section \ref{dX}.

\medskip   
In the next result and throughout this thesis we need the concept of {\it an affine cone minus its vertex}.       
For the convenience of the reader we now recall what this is.    
Suppose $Q = S/P$ is a flag manifold, where $S$ is a connected complex semisimple Lie group 
and $P$ is a parabolic subgroup.   
Then $Q$ can be $S$ equivariantly embedded into some projective space $\mathbb P_N$.   
Consider this projective space as the hyperplane at infinity in the projective space $\mathbb P_{N+1}$,   
i.e.,   
\[   
        \mathbb P_N \; = \; \{ ( 0 : z_1 : \ldots : z_{N+1} ) \in \mathbb P_{N+1} \}   
\]   
where we have used homogeneous coordinates.   
Now consider the complex manifold $X$ consisting of all the complex lines joining the point 
$(1:0:\ldots:0)$ to points in $Q$ contained in the hyperplane at infinity, but minus the point  
$(1:0:\ldots:0)$ itself and the points of $Q$.  
This is the affine cone minus its vertex over the flag manifold $Q$.   
It happens that $X = S/H$ is homogeneous under the induced $S$ action and  
$S/H \to S/P$ is a $\mathbb C^*$ bundle,   
where $H$ is the kernel of a non-trivial character $\chi: P \to \mathbb C^*$.    
As an example, if $Q = \mathbb P_N$, then $X = \mathbb C^{N+1} - \{ 0 \}$.  

\begin{theorem} \cite{AG94} \label{AG941}  
Suppose $G$ is a connected complex Lie group and $H$ is a closed complex 
subgroup such that $X := G/H$ satisfies $\mathscr{O}(X)\neq\mathbb C$ and $d_X\leq 2$. 
Let $Y :=G/ J$ be the base of the holomorphic reduction 
(see section \ref{hreduction}) of $X$ and $F := J/H$ be its fiber. 
\begin{itemize}
\item a) If $d_X =1$, then $F$ is compact and connected and $Y$ is an affine cone minus its vertex. 
\item b) If $d_X=2$, then one of the following two cases occurs: 
\subitem $b_1)$ The fiber $F$ is connected and satisfies $d_F=1$ and the base $Y$ is an affine cone minus its vertex, 
\subitem $b_2)$ The fiber $F$ is compact and connected and $d_Y = 2$; moreover, $Y$ is one of the following manifolds: 
\subsubitem $1)$ The complex line $\mathbb C$; 
\subsubitem $2)$ The affine quadric $Q_2$; 
\subsubitem $3)$ ${\mathbb P}_2 \setminus Q$ where $Q$ is a quadric curve; 
\subsubitem $4)$ A homogeneous holomorphic $\mathbb C^*$-bundle over an affine cone with its 
vertex removed. In this case $Y$ is either an algebraic principal $\mathbb C^*$ -bundle 
or is covered two-to-one by such.
\end{itemize}
\end{theorem}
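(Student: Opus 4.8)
The plan is to study $X$ through its holomorphic reduction $\pi : X = G/H \to G/J =: Y$, with fiber $F = J/H$. By construction $Y$ is holomorphically separable and $\mathscr{O}(F) = \mathbb{C}$. Since a disconnected $F$ would carry non-constant, locally constant holomorphic functions, the condition $\mathscr{O}(F) = \mathbb{C}$ already forces $F$ to be connected. Moreover $Y$ inherits the non-constant holomorphic functions coming from $\mathscr{O}(X) \neq \mathbb{C}$, so $Y$ is not a point; being a connected, holomorphically separable complex manifold it cannot be compact, and hence, as an oriented manifold, its top $\mathbb{Z}_2$-homology vanishes, giving $d_Y \geq 1$.

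The key reduction is the additivity $d_X = d_Y + d_F$, which I would prove with the Leray--Serre spectral sequence of the bundle $F \to X \to Y$ in $\mathbb{Z}_2$ coefficients. Writing $2m = \dim_{\mathbb{R}} Y$ and $2f = \dim_{\mathbb{R}} F$, the base and fiber homologies vanish above degrees $2m - d_Y$ and $2f - d_F$, so the only potentially non-zero entry in total degree $2(m+f) - (d_Y + d_F)$ is the corner $E^2_{2m - d_Y,\, 2f - d_F}$; every differential into or out of this corner lands where either base or fiber homology already vanishes, so the corner survives to $E^\infty$ while no higher total degree contributes. Here one uses orientability and that the identity component of the structure group $J$ acts trivially on $H_*(F,\mathbb{Z}_2)$, so the relevant local system is usable. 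Since $F$ is connected and oriented, $d_F = 0$ holds exactly when $F$ is compact. Combining with $d_Y \geq 1$ and $d_X \leq 2$ leaves precisely three possibilities: $(d_Y, d_F) = (1,0)$ when $d_X = 1$, and $(d_Y, d_F) = (2,0)$ or $(1,1)$ when $d_X = 2$. These are exactly cases (a), $(b_2)$ and $(b_1)$, and in each the asserted properties of $F$ (compact connected, or connected with $d_F = 1$) are then immediate.

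It remains to identify the holomorphically separable homogeneous base $Y = G/J$ with $d_Y = 1$ or $d_Y = 2$. First I would place $Y$ in the algebraic category: holomorphic separability lets one map $Y$ by its functions, and, together with an analysis of the radical and semisimple orbits, recognize $Y$ as a quasi-affine homogeneous space. In a projective realization the orbits of the linear solvable radical are $(\mathbb{C}^*)^s \times \mathbb{C}^t$ by Lie's flag theorem together with Snow's theorem, the semisimple part contributes flag manifolds and their affine cones, and the Borel--Remmert theorem governs the compact directions. Once $Y$ is cast in this algebraic setting, its classification is exactly Akhiezer's: homogeneous spaces with $d = 1$ are affine cones minus their vertices \cite{Akh77}, while those with $d = 2$ fall into the four listed models, namely $\mathbb{C}$, the affine quadric $Q_2$, $\mathbb{P}_2 \setminus Q$, and a homogeneous $\mathbb{C}^*$-bundle over an affine cone minus its vertex \cite{Akh83}. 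Matching these to the two values of $d_Y$ yields the stated conclusions.

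I expect the main obstacle to be this last step: reducing the holomorphically separable base $Y$ from the complex Lie group setting to the algebraic one in which Akhiezer's classification applies, and carrying out the separate analysis of the several $d_Y = 2$ subcases, in particular showing that the $\mathbb{C}^*$-bundle base is an algebraic principal bundle or is covered two-to-one by such. By contrast, the spectral sequence additivity is robust once the triviality of the $\mathbb{Z}_2$ local system is arranged, and the bookkeeping that translates $(d_Y, d_F)$ into the three cases is routine.
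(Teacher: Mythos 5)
The paper offers no proof of this statement---it is quoted verbatim from \cite{AG94}---so your proposal must stand on its own, and it has a genuine gap at its very first step. You assert that ``by construction'' the fiber $F = J/H$ of the holomorphic reduction satisfies $\mathscr{O}(F) = \mathbb{C}$, and you derive the connectedness of $F$ from this. That is false: the defining property of the reduction is only that every holomorphic function on $X$ is a pullback from $Y$, which says nothing about functions on $F$ that do not extend to $X$. The paper itself records the counterexample in section \ref{hreduction}: for $G = SL(2,\mathbb{C})$ and $H \cong \mathbb{Z}$ unipotent, the holomorphic reduction is $G/H \to G/\overline{H} \cong \mathbb{C}^2 \setminus \{(0,0)\}$ with fiber $\mathbb{C}^*$, which is Stein. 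That example satisfies the hypotheses of the very theorem you are proving (one checks $d_X = 2$, since $X$ is homotopy equivalent to $S^1 \times S^3$ inside a real six--dimensional manifold) and lands in case $b_1$; so in exactly the cases you must classify, $\mathscr{O}(F) \neq \mathbb{C}$, and the connectedness of $F$, which is part of the conclusion, needs a genuine argument rather than this shortcut. A second gap is the claimed equality $d_X = d_Y + d_F$. Your spectral-sequence corner argument needs the $\pi_1(Y)$-monodromy on $H_{2f - d_F}(F,\mathbb{Z}_2)$ to be trivial, and triviality of the action of the \emph{identity component} of the structure group says nothing about its group of components, which is where the monodromy lives. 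This is precisely why the paper's Lemma \ref{flemma} asserts equality only for orientable bundles and otherwise only the inequality $d_X \ge d_F + d_B$ under a homotopy-compactness hypothesis on the base. With only the inequality, your bookkeeping leaves the spurious case $(d_Y, d_F) = (1,0)$ with $d_X = 2$ unexcluded; eliminating it needs a separate argument (for instance: when $F$ is compact and $Y$ is homotopy equivalent to a compact manifold, $X$ is homotopy equivalent to a compact manifold of real dimension $\dim_{\mathbb{R}} X - d_Y$, whose top $\mathbb{Z}_2$-class forces $d_X \le d_Y$ and hence $d_X = d_Y$).

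Finally, the step you explicitly defer---identifying the holomorphically separable base $Y = G/J$ with $d_Y \le 2$---is not a technical afterthought but the mathematical core of \cite{AG94}. Akhiezer's classifications \cite{Akh77}, \cite{Akh83} apply to algebraic groups $G$ and algebraic isotropy $H$, whereas here $G$ is merely a complex Lie group and $J$ may have infinitely many connected components; the paper's introduction stresses that this is exactly where the difficulty of the invariant $d_X$ is concentrated. Reducing $Y$ to the algebraic setting (via normalizer and related fibrations, control of radical and semisimple orbits, and the two-to-one covering subtlety in case $4$ of $b_2$) constitutes most of the work in \cite{AG94}, and your proposal only names a plan for it. In sum, you have correctly isolated the architecture of the argument---holomorphic reduction, additivity of $d$ along the fibration, classification of the base---but as written none of the three ingredients is actually established.
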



\subsection{$G$ Abelian}

Let $G$ be an Abelian complex Lie group with a closed subgroup $H$. 
Thus, $H$ is normal and $G/H$ is an Abelian complex Lie group. 
In the following we define a special complex Lie group called a Cousin group, 
and then give a classification of complex Abelian Lie groups.

\subsubsection{Cousin group}

Cousin was the first to call attention to Abelian non-compact complex Lie groups,  
now called Cousin groups or toroidal groups,   
with the additional property that $\mathscr{O}(C)\cong \mathbb C$.
These groups appear in the classification of Abelian complex Lie groups \cite{Mor66}.
 
\begin{definition}[Cousin group] \label{cousingroup}
A Cousin group is a complex Lie group $C$ which admits no non-constant holomorphic function.
\end{definition}

Let $C$ be a connected complex Lie group with $\mathscr{O}(C)\cong \mathbb C$.
Since $GL_{\mathbb C}(V)$ is holomorphically separable,
there is no non-constant holomorphic homomorphism $C\to GL_{\mathbb C}(V)$ 
to the general linear group of a complex vector space $V$.
Let $V=\mathfrak C$ be the Lie algebra of $C$. 
The kernel of the adjoint representation ${\rm Ad} : C \to GL_{\mathbb C}(\mathfrak C)$ is the center of $C$
showing that {\bf a connected complex Lie group $C$ with $\mathscr{O}(C)\cong \mathbb C$ is Abelian}.    

Since a Cousin group is Abelian, the exponential map $\exp :\mathfrak C\to C$ is a
surjective homomorphism. 
The kernel $\Gamma$ of the exponential map is discrete, so $C\cong \mathfrak C/\Gamma$.
Since $\mathfrak C$ is a vector space, it is isomorphic to $\mathbb C^n$ for some $n$.
This gives us the structure of a Cousin group $C={\mathbb C}^n /\Gamma$ where $\Gamma$ is a discrete additive 
subgroup of ${\mathbb C}^n$. 
{\bf Note that we have the real torus ${\rm Span}(\Gamma)_\mathbb R/\Gamma$ inside any Cousin group.}

\subsubsection{Classification of connected, Abelian, complex Lie groups}  \label{cocaclg}

We first give a classification of connected, Abelian, real Lie group $G$ with Lie algebra $\mathfrak g$.
Since $[X,Y]=0$ for all $X, Y\in\mathfrak g$ we can consider the Lie algebra $\mathfrak g$
as a vector space over $\mathbb R$. For an Abelian connected Lie group the exponential map 
$exp: {\mathfrak g}\to G$ is a surjective homomorphism
with discrete kernel, thus a finitely generated free Abelian group (Theorem 3.6, page 25, \cite{BT85}).      
Hence, ${\rm Ker} \exp = \langle v_1,\cdots ,v_p \rangle_{\mathbb Z}$ where
$\{v_1,\cdots,v_p \}$ is a linearly independent set of vectors in $\mathfrak g$.
Let $V := \langle v_1,\cdots ,v_p \rangle_{\mathbb R}$ and choose a complementary vector subspace $W$
with ${\mathfrak g}=V\oplus W$.
Recall $\dim\mathfrak g=n$, $\dim V=p$. So $\dim W=n-p$. Then
\begin{equation*}
\begin{aligned}
 G \; \cong  \; &  \mathfrak g/{\rm Ker} \exp              \\                   
   \cong \; & (V\oplus W)/\langle v_1,\cdots ,v_p \rangle_{\mathbb R}   \\  
   \cong \; & V/\langle v_1,\cdots ,v_p \rangle_{\mathbb Z} \oplus W     \\
        = \; &      (\mathbb S ^1)^p\times \mathbb R ^{n-p}
\end{aligned}
\end{equation*}
Here $K:=(\mathbb S^1)^p$ is  the product of $p$ copies of a circle $\mathbb S^1$   
and is the unique maximal compact subgroup of $G$.    

\medskip   
For an Abelian complex Lie group $G=\mathbb C^n /\Gamma$ one has the following 
classification due to Remmert and Morimoto, e.g., see \cite{Mor66}.   
We now choose a complex vector space $W$ that is complementary to 
$V := \langle v_1, \ldots, v_p \rangle_{\mathbb C}$.  
Since $G$ is holomorphically isomorphic to $(V/\Gamma) \times W$, we can 
reduce to the setting where $\Gamma$ generates $V$ as a complex vector space   
and $W \cong \mathbb C^t$.           

\medskip   
Let $V_{\Gamma} := \langle \Gamma \rangle_{\mathbb R}$.  
Then the corresponding subgroup $K := V_{\Gamma}/\Gamma$ 
is the maximal compact subgroup of $G$.   
Set $W_{\Gamma} := V_{\Gamma} \cap i V_{\Gamma}$, the maximal complex vector subspace of $V_{\Gamma}$.   
The $W_{\Gamma}$--orbits in $G$ are isomorphic to $W_{\Gamma}/W_{\Gamma}\cap\Gamma$ and 
give a complex foliation of $G$.  
The closure of the $W_{\Gamma}$--orbit through the identity of $G$ is a subtorus $L_1$ of $K$ and we can choose a 
complementary totally real subtorus $L_2$ such that $K = L_1 \times L_2$.   
There are uniquely defined vector subspaces $\mathfrak l_i$ of $V$ such that 
$L_i = {\mathfrak l_i}/\Gamma\cap\mathfrak l_i$ and we set $U_i := {\mathfrak l_i} + i {\mathfrak l_i}$.   
Assuming $\Gamma$ generates $V$ as a complex vector space, we have   
\[  
        G \; = \; V/\Gamma \; = \; U_1/(U_1 \cap \Gamma) \times U_2/(U_2 \cap \Gamma)
\]  
Note that $ U_2/(U_2 \cap \Gamma) \cong (\mathbb C^*)^s$, since $L_2$ is totally real.   

\medskip  
We claim $ U_1/(U_1 \cap \Gamma)$ is a Cousin group, i.e., that $\mathscr{O}( U_1/(U_1 \cap \Gamma)) = \mathbb C$.  
For any $f \in \mathscr{O}( U_1/(U_1 \cap \Gamma))$ the restriction of $|f|$ to $L_1$ attains a maximum 
at some point $x\in L_1$, since the latter group is compact.  
The orbit map $\mathfrak m \mapsto G$ is holomorphic and thus the pullback of $f$ to $\mathfrak m$ 
is holomorphic and its modulus attains a maximum at any preimage of $x$.  
By the Maximum Principle this pullback function is constant.  
Thus $f$ is constant on the orbit of $f$ through $x$.   
Since this orbit is dense in $L_1$, the restriction of $f$ to $L_1$ is constant.  
Finally, by using the Identity Principle it follows from the fact that $L_1 = {\mathfrak l_1}/\Gamma\cap\mathfrak l_1$ and 
$U_1 := {\mathfrak l_1} + i {\mathfrak l_1}$ that $f$ is constant on 
 $ U_1/(U_1 \cap \Gamma)$, i.e.,  $ U_1/(U_1 \cap \Gamma)$ is a Cousin group.


\begin{example}
The set $\{(1, 0), (0, 1), (i, i\alpha)\}$ is linearly independent     
over $\mathbb R$ where $0 < \alpha < 1$ and $\alpha\notin \mathbb Q$.
Let $V := \langle (1, 0), (0, 1), (i, i \alpha) \rangle_\mathbb R$. 
One can check that
$K := V/ \langle (1, 0), (0, 1), (i, i \alpha) \rangle_\mathbb Z$ is the maximal compact subgroup
of $C := \mathbb C ^2 / \langle (1, 0), (0, 1), (i, i \alpha) \rangle_\mathbb Z$.
We claim that $\mathscr{O}(C)=\mathbb C$.
Here $\mathfrak m = \langle (1,\alpha) \rangle_\mathbb C$. The orbit of $M$ in $C$ is dense in $K$. 
Thus $L=K$ in this case and $L^\mathbb C=K^\mathbb C=C$.
Suppose $f\in \mathscr{O}(C)$. Define $\sigma:\mathbb C\to\mathfrak m, z\mapsto z(1,\alpha)$.
Consider the holomorphic map:
$$
     \mathbb C\stackrel{\sigma}{\longrightarrow}\mathfrak m\stackrel{\exp}{\longrightarrow}
     M \stackrel{f}{\longrightarrow}\mathbb C
$$
Since $M$ lies in $K$ and $f(K)$ is compact, the holomorphic function 
$f\circ\exp\circ\sigma$ is a bounded, entire function, hence constant. 
But $\sigma$ and $\exp$ are not constant.
Therefore $f|_M$ is constant and thus $f|_K$ is constant. 
However, $K$ has real co-dimension one in $C$, it follows that $f$ is constant on $C$.  
\end{example}


\subsection{G nilpotent}

We need the following definition. 
\begin{definition}    
A (principal) Cousin group tower of length one is a Cousin group.
A (principal) Cousin group tower of length $n > 1$ is a (principal) holomorphic bundle with fiber 
a Cousin group and base a (principal)
Cousin group of length $n-1$.
\end{definition}
One has the following structure theorem.   

\begin{theorem} \cite{GH78}
Let $G$ be a connected, nilpotent, complex Lie group and let $H$ be a closed complex subgroup of $G$. 
Then there exists a closed complex subgroup $J$ of $G$, containing $H$, such that
\begin{itemize} 
\item $\mathscr{O}(G/H)=\pi^*(\mathscr{O}(G/J))$, where $\pi:G/H\to G/J$ is the bundle projection
\item $G/J$ is Stein
\item the fiber $J/H$ is connected 
\item $\mathscr{O}(J/H)=\mathbb C$ 
\item $J/H$ is a principal Cousin group tower (e.g., see Theorem IV.1.5 in \cite{Oel85}).     
\end{itemize}
\end{theorem}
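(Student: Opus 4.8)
The plan is to take $J$ to be the isotropy of the \emph{holomorphic reduction} of $X=G/H$ and then to extract the structure of the fibration $\pi\colon G/H\to G/J$ from the nilpotency of $G$. Concretely, I would set $J=\{\,g\in G: f(g\cdot eH)=f(eH)\ \text{for all }f\in\mathscr{O}(X)\,\}$. Since $x\mapsto g\cdot x$ is a biholomorphism of $X$, each $f_g(x):=f(g\cdot x)$ again lies in $\mathscr{O}(X)$, and a short computation with this observation shows that $J$ is a subgroup; it is a closed complex subgroup because it is cut out by the holomorphic conditions $f_g=f$, and it contains $H$. By construction $\pi^{*}$ maps $\mathscr{O}(G/J)$ onto $\mathscr{O}(G/H)$, giving the first bullet, and the same definition forces $\mathscr{O}(G/J)$ to separate the points of $G/J$, so the base is holomorphically separable. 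The remaining four assertions --- that the fiber is connected with only constant functions and is a Cousin tower, and that the base is Stein --- are precisely the statements that fail for general $G$ and for which I would have to use that $G$ is nilpotent.

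For the fiber I would argue by induction on $\dim J$, treating $F=J/H$ as a homogeneous space of the nilpotent group $J^{0}$. A nontrivial nilpotent group has a positive-dimensional connected center $Z(J)^{0}$; its orbit through $eH$ is a connected abelian complex subgroup, and one checks that it carries no nonconstant holomorphic function, so by the Remmert--Morimoto description recalled in Section~\ref{cocaclg}, together with the fact proved above that a connected complex Lie group with $\mathscr{O}=\mathbb{C}$ is abelian, this orbit is a Cousin group. Dividing it out leaves a nilmanifold of smaller dimension to which the inductive hypothesis applies, and reassembling the two stages exhibits $F$ as a principal holomorphic bundle with Cousin fiber over a Cousin tower, hence a principal Cousin tower; this is the content of the cited Theorem~IV.1.5 of \cite{Oel85}. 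In particular $\mathscr{O}(F)=\mathbb{C}$, which in turn forces $F$ to be connected, since a disconnected $F$ would admit a nonconstant locally constant, hence holomorphic, function. This yields the third, fourth and fifth bullets together.

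It remains to prove that the holomorphically separable base $G/J$ is Stein, and here again I would induct on $\dim G$. The base case is $G$ abelian: then $X$ is an abelian complex Lie group, the Remmert--Morimoto decomposition writes it as a product of a Cousin group with $(\mathbb{C}^{*})^{s}\times\mathbb{C}^{t}$, and the reduction base is the Stein factor $(\mathbb{C}^{*})^{s}\times\mathbb{C}^{t}$. For the inductive step I would fit $G/J$ into the intermediate fibration $G/J\to G/ZJ$, where $Z=Z(G)^{0}$; the base $G/ZJ$ is a holomorphically separable homogeneous space of the lower-dimensional nilpotent group $G/Z$ and so is Stein by induction, while the fiber is the central abelian orbit $Z/(Z\cap J)$, which by the maximality built into the holomorphic reduction contains no Cousin part and is therefore Stein. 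Finally I would assemble these with the permanence properties collected earlier --- covering spaces of a Stein base are Stein, finite quotients of Stein manifolds are Stein, and holomorphic vector bundles over a Stein base are Stein --- together with the Grauert--Oka principle (Theorem~\ref{GOka}) and Grauert's triviality theorem (Theorem~\ref{Grauert}) to trivialize the relevant structure bundles over the Stein base.

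I expect the real difficulty to be this last upgrade from holomorphic separability to the full Stein property, rather than the bookkeeping of the fibrations. Two points must be supplied by the nilpotent structure: that the subgroups $ZJ$ and the successive central orbits I peel off are genuinely \emph{closed} and give holomorphically locally trivial \emph{principal} bundles --- which can fail for a densely embedded one-parameter subgroup, and is exactly where the growth of centers and normalizers in a nilpotent group is indispensable --- and that no holomorphic function on a fiber is lost in passing to the base, so that separability really does descend and the Stein property can be built up one central layer at a time.
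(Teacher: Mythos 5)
Your overall scaffolding (take $J$ to be the isotropy of the holomorphic reduction, then exploit the center of the nilpotent group inductively) points in the right direction, but the crux of both inductions is missing, and in each case what is missing is the same ingredient: Malcev's cocompact real form. For the fiber, you assert that the orbit of the connected center "carries no nonconstant holomorphic function" and then divide out that whole orbit. This is precisely the hard point, and it is not true as stated: the central orbit $Z/(Z\cap\Gamma)$ is a connected abelian complex Lie group, hence of the form $\mathbb{C}^k\times(\mathbb{C}^*)^p\times C$ by the Remmert--Morimoto classification of \S\ref{cocaclg}, and nothing rules out $k+p>0$ even when $\mathscr{O}(G/\Gamma)=\mathbb{C}$, because holomorphic functions on a fiber do not propagate to the total space. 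The actual argument behind Theorem IV.1.5 of \cite{Oel85}, as sketched in the paper, proves only that the Cousin factor $C$ is positive-dimensional and then divides out $\widehat{C}$, the preimage of $C$, not the full central orbit. Showing $\dim C>0$ is where the real theory enters: by \cite{Mal49} there is a closed real subgroup $G_{\mathbb R}$ containing $\Gamma$ cocompactly; since $G/\Gamma$ is not Stein, $G_{\mathbb R}/\Gamma$ is not totally real, so $\mathfrak{m}=\mathfrak{g}_{\mathbb R}\cap i\mathfrak{g}_{\mathbb R}\neq 0$, and nilpotency forces $\mathfrak{m}\cap\mathfrak{z}\neq 0$, which plants a positive-dimensional complex subgroup in the maximal compact subgroup of the central orbit. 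Your proposal never invokes $G_{\mathbb R}$ at all. Relatedly, your logic for the third and fourth bullets is circular: $\mathscr{O}(J/H)=\mathbb{C}$ is a nontrivial conclusion special to nilpotent $G$ (the paper's $SL(2,\mathbb{C})$ example shows the fiber of a holomorphic reduction can be $\mathbb{C}^*$), and it is a hypothesis of Theorem IV.1.5, not an output of it; moreover a nonconstant locally constant function on a disconnected $F$ lives on $F$, not on $X$, so by itself it contradicts nothing in the definition of $J$.

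The Stein induction has a parallel gap: you assume $G/ZJ$ is holomorphically separable "by induction," but separability does not formally descend along $G/J\to G/ZJ$, since a separating function on $G/J$ need not be $Z$-invariant; you flag this yourself but never supply it. The paper's route, following \cite{GH78}, closes exactly this hole differently: it first reduces to discrete isotropy via the normalizer fibration --- Matsushima \cite{Mat51} gives that $N=N_G(H^0)$ is connected for nilpotent $G$, so $G/N\cong\mathbb{C}^n$ and the bundle splits, $G/H\cong G/N\times N/H$, by Theorem \ref{GOka} --- and then, in the discrete case, proves by induction over the central fibration the three-way equivalence that $G/\Gamma$ is holomorphically separable if and only if $G_{\mathbb R}/\Gamma$ is totally real if and only if $G/\Gamma$ is Stein. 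That equivalence is the engine that upgrades separability to Steinness in one stroke, and again it runs through the real form, which is also why the reduction to discrete isotropy (where Malcev applies) is not optional bookkeeping. Finally, your closing appeal to trivialization should be handled with care: a holomorphic fiber bundle with Stein fiber and Stein base need not be Stein (Skoda's example \cite{Sk77}, cited in the paper), so the principal structure of the central fibrations must be preserved at every stage, which is yet another reason the closedness of the orbits you rightly flag is essential.
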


\begin{sketch}[Proof sketch]
One can reduce the situation to the discrete case by the following argument. 
The holomorphic reduction of $G/H$ factors through the normalizer fibration 
since the base of the normalizer fibration is holomorphically separable (Lie's flag theorem). 
So it is enough to consider the normalizer fibration.
\begin{displaymath}
    \xymatrix{
        G/H \ar[dr]^{N/H}\ar[d] &  \\
        G/J \ar[r]       & G/N }
\end{displaymath}
By assumption $G$ is a connected nilpotent Lie group, and $H^0$ is a connected subgroup of $G$. 
Matsushima \cite{Mat51} proved that for a nilpotent Lie group $G$,   
the normalizer $N:=N_G(H^0)$ of a connected subgroup is connected.   
So $G/N=\mathbb C^n$, and 
$G/H=G/N\times N/H$ by Theorem \ref{GOka}.
As a consequence, it is enough to study the fiber of the normalizer fibration
which has discrete isotropy (to be discussed in section \ref{normalizerfibration}).   
From now on we assume $H=\Gamma$ is a discrete subgroup of $G$ 
such that the smallest closed, connected, complex subgroup of $G$ containing $\Gamma$ is all of $G$.
The center $Z$ of $G$ is a positive dimensional $\Gamma$-normal subgroup of $G$, i.e., 
$Z.\Gamma$ is a closed subgroup of $G$ (see \cite{GH78}).
We have the fibration 
$$
             G/\Gamma\xrightarrow{Z.\Gamma/\Gamma} G/Z.\Gamma.
$$ 
This fibration is applied to the base of the holomorphic reduction to prove by induction that 
$G/\Gamma$ is holomorphically separable if and only if $G_{\mathbb R}/\Gamma$ is totally real
in $G/\Gamma$ if and only if $G/\Gamma$ is Stein.
For more explanation see \cite{GH78}.
\end{sketch}


D. Akhiezer \cite{Akh84} and K. Oeljeklaus \cite{Oel85} showed the following result that applies to the fiber of 
the holomorphic reduction of   
any complex nilmanifold $X$ and proves the remaining point in this structure result for complex nilmanifolds.    

\begin{theorem}[Theorem IV.1.5. \cite{Oel85}]
Let $X=G/\Gamma$ be a complex nilmanifold with $\mathscr{O}(G/\Gamma)\cong\mathbb C$. 
Then $X$ is a principal Cousin group tower.
\end{theorem}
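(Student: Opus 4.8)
The plan is to argue by induction on $\dim_{\mathbb C} G$, using the center fibration that already appears in the proof sketch above. First I would reduce to the case where $G$ is simply connected (pass to the universal cover $\widetilde G\to G$ and lift $\Gamma$, which does not change $X$) and where the smallest closed connected complex subgroup $H_0$ containing $\Gamma$ is all of $G$. The latter is forced by the hypothesis $\mathscr{O}(X)\cong\mathbb C$: if $H_0\subsetneq G$, then $X=G/\Gamma\to G/H_0$ is a holomorphic fiber bundle whose base $G/H_0$ is biholomorphic to some $\mathbb C^m$ with $m\ge 1$ (a quotient of a simply connected nilpotent complex Lie group by a closed connected complex subgroup is Stein), so pulling back coordinate functions would produce nonconstant elements of $\mathscr{O}(X)$, a contradiction. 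The base case of the induction is $G$ abelian, where $X$ is an abelian complex Lie group with $\mathscr{O}(X)\cong\mathbb C$, hence a Cousin group, i.e. a tower of length one.

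For the inductive step I would let $Z$ denote the center of $G$, which is connected and positive dimensional since $G$ is a nontrivial simply connected nilpotent group; by the fact recalled in the sketch above, $Z\cdot\Gamma$ is closed. This gives the holomorphic fibration $X=G/\Gamma\to G/Z\cdot\Gamma=:X'$ with fiber $F:=Z/(Z\cap\Gamma)$, an abelian complex Lie group. Because $Z$ is central, left translation by $Z$ descends to a free holomorphic action of $F$ on $X$ with quotient $X'$, so this is a \emph{principal} $F$-bundle. The base $X'=(G/Z)/\overline\Gamma$ is a complex nilmanifold of strictly smaller dimension, where $\overline\Gamma$ is the image of $\Gamma$ in $G/Z$, which is discrete since $Z\cdot\Gamma$ is closed; and since pullback injects $\mathscr{O}(X')$ into $\mathscr{O}(X)=\mathbb C$, we get $\mathscr{O}(X')\cong\mathbb C$. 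By the induction hypothesis $X'$ is a principal Cousin group tower of length $n-1$.

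The crux, and the step I expect to be the main obstacle, is to show that the fiber $F$ is a Cousin group. One cannot argue merely from closedness of $F$ in $X$, since holomorphic functions on a central fiber need not extend to the total space; and the tempting shortcut of extending a character $Z\to\mathbb C^*$ (or homomorphism $Z\to\mathbb C$) cutting out a Stein factor of $F$ to a character of $G$ typically fails, precisely because the interesting central directions lie inside the commutator subgroup $[G,G]$. Instead I would argue by contradiction using the compatibility of the holomorphic reduction with the center fibration for nilpotent groups that is established in the preceding structure theorem: if $F$ were not Cousin, the Remmert--Morimoto classification would supply a Stein quotient $\mathbb C$ or $\mathbb C^*$ of $F$ arising from a central direction, and pushing this direction through the holomorphic reduction of $X$ would exhibit a nontrivial Stein factor there, equivalently a nonconstant element of $\mathscr{O}(X)$, contradicting $\mathscr{O}(X)\cong\mathbb C$. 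Hence $F$ is Cousin.

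Finally I would assemble the tower: $X$ is a principal holomorphic bundle with Cousin fiber $F$ over the principal Cousin group tower $X'$ of length $n-1$, so by definition $X$ is a principal Cousin group tower of length $n$, completing the induction.
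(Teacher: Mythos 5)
Your inductive skeleton, the reductions, and the principality of the central fibration are all fine, but the step you yourself flagged as the crux is a genuine gap: the claim that the fiber $F = Z/(Z\cap\Gamma)$ of the \emph{full} center fibration is a Cousin group is false in general, and the contradiction argument you sketch for it does not work. Concretely, let $G = H_3(\mathbb C)\times\mathbb C$ with coordinates $(x,y,z,w)$, where $H_3(\mathbb C)$ is the complex Heisenberg group, so that $Z\cong\mathbb C^2$ in the coordinates $(z,w)$, and let $\Gamma$ be generated by $(1,0,0,\alpha_1)$, $(0,1,0,\alpha_2)$, $(i,0,0,\alpha_3)$, $(0,i,0,\alpha_4)$. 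A word in these generators lies in $Z$ only if all net exponents vanish, so $Z\cap\Gamma = \Gamma'$ is the group $\mathbb Z + i\mathbb Z$ in the $z$--axis, and the central fiber is $F \cong T\times\mathbb C$ with $T$ a one--dimensional torus --- not Cousin. On the other hand, fibering only by the $z$--axis $\widehat C\cong\mathbb C$ (which is exactly $G'$ here) exhibits $X = G/\Gamma$ as a principal $T$--bundle over $B = \mathbb C^3/\bar\Lambda$ in the coordinates $(x,y,w)$, where $\bar\Lambda$ is the rank--four image of $\Gamma$; for a generic choice of the $\alpha_i$ no nonzero $\mathbb C$--linear functional maps $\bar\Lambda$ into a cyclic subgroup of $\mathbb C$, so by the Remmert--Morimoto classification $B$ has no $\mathbb C$ or $\mathbb C^*$ factor and is a Cousin group of hypersurface type. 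Then every $f\in\mathscr{O}(X)$ is constant on the compact torus fibers, descends to the Cousin base, and is constant: $\mathscr{O}(X)\cong\mathbb C$. So the hypotheses of the theorem hold while $F$ has the Stein quotient $\mathbb C$ in the $w$--direction, and your proposed rescue --- that a Stein direction of $F$ would ``push through the holomorphic reduction'' to a nonconstant function on $X$ --- fails: the $w$--direction yields no function on $X$ because it only becomes part of a Cousin group after being combined with the \emph{non-central} $x,y$ directions. With your decomposition, $X$ is merely a principal $(T\times\mathbb C)$--bundle over a torus, which is not a Cousin tower presentation, so the induction stalls.

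This is precisely why the paper's proof does not fiber by the whole center. It writes the central orbit as $\mathbb C^k\times(\mathbb C^*)^p\times C$ with $C$ Cousin, and the real work is showing $\dim C > 0$: since $\mathscr{O}(X)\cong\mathbb C$, the manifold $X$ is not Stein, so for the Malcev hull $G_{\mathbb R}\supseteq\Gamma$ the ideal $\mathfrak m := \mathfrak g_{\mathbb R}\cap i\mathfrak g_{\mathbb R}$ is nonzero; in a nilpotent algebra every nonzero ideal meets the center, so $\mathfrak z\cap\mathfrak m\neq 0$, and the corresponding positive--dimensional complex subgroup has its orbit inside the maximal compact subgroup of the central orbit, forcing $\dim C > 0$. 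One then fibers by the preimage $\widehat C$ of $C$ alone: this subgroup is still central (so the bundle is still principal, as in your argument), its orbits are closed, and the base is a lower--dimensional nilmanifold with $\mathscr{O}\cong\mathbb C$, so the induction closes. In the example above this amounts to fibering by $\mathbb C_z$ rather than by all of $Z$. Your remaining steps (reduction to $G$ simply connected, discreteness of the image of $\Gamma$ in $G/Z$, injectivity of pullback on functions, and the final assembly of the tower) are correct and agree with the paper; the repair needed is to replace $Z$ by $\widehat C$ and to supply the $\mathfrak m\cap\mathfrak z\neq 0$ argument that makes $\widehat C$ positive dimensional.
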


\begin{sketch}[Proof sketch]  
The proof is by induction on $\dim G$ and for $\dim G = 1$, it is clear that $G/\Gamma$ is a torus. 
If $G$ is Abelian, clearly $G/\Gamma$ is a complex Lie group with $\mathscr{O}(G/\Gamma) = \mathbb C$ 
and thus $G/\Gamma$ is a Cousin group.  
So we assume the center $Z$ of $G$ satisfies $0 < \dim Z < \dim G$.  
Its orbit $Z/Z\cap\Gamma$ is closed in $G/\Gamma$ and is a connected Abelian complex Lie group.   
In the previous section we noted that $Z/Z\cap\Gamma$   
 is isomorphic to a product $\mathbb C^k \times (\mathbb C^*)^p \times C$, where $C$ is a Cousin group.  
We need to show that $\dim C > 0$ in order to apply the induction. 

\medskip 
As noted above there is a real connected subgroup $G_{\mathbb R}$ of $G$ containing $\Gamma$ cocompactly  
and since $G/\Gamma$ is not Stein, $G_{\mathbb R}/\Gamma$ is not totally real.  
Thus the ideal $\mathfrak{m} := \mathfrak{g}_{\mathbb R}\cap i\mathfrak{g}_{\mathbb R} \not= 0$. 
Let $\mathfrak{z}$ be the center of $\mathfrak{g}$, and set $\mathfrak{h}=\mathfrak{z}\cap \mathfrak{m}$.
In a nilpotent Lie algebra every non-zero ideal intersects the center non-trivially, so $\mathfrak{h}\neq 0$ 
(see page 13 in \cite{Hum72}).    
Let $H$ be the Lie group corresponding to $\mathfrak{h}$.    
Let $\mathfrak k$ denote the Lie algebra of the maximal compact subgroup $K$ of $Z/Z\cap\Gamma$.   
Note that $K$ contains a positive dimensional complex Lie subgroup, namely the orbit of $H$ 
through $e$.  
Thus $\dim C > 0$.  
Let $\widehat{C}$ denote the preimage of $C$ in $G$   and note that $\widehat{C}$ is a 
central closed complex subgroup of $G$ that has closed orbits in $G/\Gamma$.         
Then the holomorphic fibration $G/\Gamma \to G/\widehat{C}.\Gamma$ has 
the positive dimensional Cousin group $C$ as fiber and the lower dimensional 
complex nilmanifold $G/\widehat{C}.\Gamma$ as base.   
Also because of the fact that $\widehat{C}$ is central in $G$, this bundle is a principal bundle.  
Since necessarily $\mathscr{O}(G/\widehat{C}.\Gamma) = \mathbb C$, the 
result now follows by induction.   
\end{sketch}


\subsection{G solvable}    \label{CRSmanifolds}

Let $G$ be a connected solvable complex Lie group and $\Gamma$ be a discrete subgroup.
Assume there exists a closed, connected (real) subgroup $G_\mathbb R$ of $G$
containing $\Gamma$ such that $G_\mathbb R / \Gamma$ is compact. 
The triple $(G,G_\mathbb R, \Gamma )$ will be called a CRS manifold. 
The condition for the existence of such a real subgroup $G_{\mathbb R}$ in 
a given complex solvable Lie group $G$ and discrete subgroup $\Gamma$ is not known. 
However, if $G$ is a nilpotent complex Lie group, then it contains such a real Lie subgroup \cite{Mal49}. 
Note that the classification of $G/\Gamma$ for a K\"{a}hler solvmanifold is given in \cite{GO11}   
whenever $G_{\mathbb R}$ exists and $G_\mathbb R /\Gamma$ is of codimension two inside $G/\Gamma$.
C\oe ur\'{e} and Loeb present the following example \cite{CL85} which states that in the solvable case
this real subgroup might exist but $G/\Gamma$ might not even be K\"{a}hler.

\begin{example}\label{CL}
Let $G_K = K\ltimes K^2$, where $K=\mathbb Z$ or $\mathbb C$ with the group operation given by 
\[
          (z,b)\circ(z',b') \; := \; ( z+z' , e^{Az'}b + b' )   
\]
where $z, z'\in K$, $b,b'\in K^2$ and $A$ is the real logarithm of the matrix 
 $    \left( \begin{array}{cc}  2 & 1 \\   1 & 1 \\  \end{array} \right)  $.  
Then $G_\mathbb C$ is a connected, simply connected three dimensional complex Lie group 
that contains the discrete subgroup
$G_\mathbb Z$ such that its holomorphic reduction is given by 
$$
         {G_\mathbb C}/{G_\mathbb Z} \to {G_\mathbb C}/{G'}_{\mathbb C}.{G_\mathbb Z}
$$
with the base biholomorphic to $\mathbb C^*$ and the fiber to $\mathbb C^*\times\mathbb C^*$. 
\end{example}

Huckleberry and E. Oeljeklaus showed that the base of the holomorphic reduction is Stein (Theorem, p. 58 in \cite{HO86}).
The following two theorems give valuable information for holomorphic reductions of K\"{a}hler solvmanifolds.


\begin{theorem}\cite{OR88}\label{OeRi88}
Let $X$ be a K\"{a}hler solvmanifold and let 
\[   
          X \; \stackrel{F}{\rightarrow} \;  Y 
\]    
denote the holomorphic reduction of $X$. 
Then $Y$ is a Stein manifold and F is a Cousin group.   
Moreover the first fundamental
group $\pi_1 (X)$ contains a nilpotent subgroup of finite index.
\end{theorem}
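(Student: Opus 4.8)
The plan is to analyze $X = G/\Gamma$ through its holomorphic reduction and to read off the three assertions from the interaction of the K\"{a}hler hypothesis with the solvable structure of $G$. First I would reduce to the case that $G$ is simply connected, by passing to the universal covering group and lifting $\Gamma$; since a simply connected solvable complex Lie group is contractible, this identifies $\pi_1(X)$ with $\Gamma$ and lets me compute the fundamental group directly from the fibration. Writing the holomorphic reduction as $X \to Y = G/J$ with fiber $F = J^0/(J^0 \cap \Gamma)$, I recall from its construction that $Y$ is holomorphically separable, that $\mathscr{O}(X) = \pi^{*}\mathscr{O}(Y)$, and that $\mathscr{O}(F) \cong \mathbb{C}$. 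The three claims then concern $Y$, $F$, and the extension of $\pi_1(Y)$ by $\pi_1(F)$ respectively.

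To show that $Y$ is Stein I would establish the general principle that a holomorphically separable homogeneous space of a connected solvable complex Lie group is Stein, arguing by induction on $\dim G$. Using Lie's theorem I would choose a one-dimensional closed complex normal subgroup $N$ of $G$ and fiber $Y$ over the quotient by the $N$-action; either the $N$-orbits are closed, producing a genuine fibration over a lower-dimensional separable solvmanifold to which the inductive hypothesis applies, or their closures would create holomorphic functions incompatible with separability. When the resulting bundle over the Stein base is topologically trivial I would trivialize it holomorphically by the Grauert--Oka principle (Theorem \ref{GOka}) and conclude that the total space is Stein. This step is also precisely the content of the Huckleberry--Oeljeklaus theorem \cite{HO86}, which I would cite as a shortcut.

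The heart of the argument, and the step I expect to be the main obstacle, is that $F$ is a Cousin group. As a fiber of the holomorphic reduction, $F$ is a closed complex submanifold of the K\"{a}hler manifold $X$ and is therefore itself K\"{a}hler, and it satisfies $\mathscr{O}(F) \cong \mathbb{C}$. I would prove that $J^0$ must be abelian: its commutator subgroup $(J^{0})'$ is nilpotent, closed and normal, and I would study the orbits of $(J^{0})'$ inside $F$. The K\"{a}hler condition obstructs these nilpotent directions, through the same vanishing $\mathfrak{m} \cap \mathfrak{g}' = 0$ for $\mathfrak{m} := \mathfrak{j}_{\mathbb{R}} \cap i\mathfrak{j}_{\mathbb{R}}$ that already governs the nilmanifold examples (cf. \cite{OR87}), whereas $\mathscr{O}(F) \cong \mathbb{C}$ forces $\mathfrak{m} \neq 0$; together these collapse the derived series and force $(J^{0})'$ to be trivial. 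Once $J^0$ is abelian, $F$ is a connected abelian complex Lie group with no non-constant holomorphic functions, i.e.\ a Cousin group (Definition \ref{cousingroup}). This is where the K\"{a}hler hypothesis does its essential work, and the delicate point is controlling the closures of the commutator orbits so as to exclude the non-abelian ``Cousin-tower'' behavior that is allowed in the merely nilpotent case \cite{Oel85}.

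Finally, for the fundamental group I would feed the fibration $F \to X \to Y$ into the homotopy exact sequence. Since $F$ is a Cousin group, $\pi_1(F)$ is a free abelian lattice $\cong \mathbb{Z}^{r}$; its image $A$ in $\pi_1(X) = \Gamma$ is a normal abelian subgroup and the quotient is $\pi_1(Y)$, giving an extension $1 \to A \to \Gamma \to \pi_1(Y) \to 1$ with $A$ abelian. To deduce that $\Gamma$ is virtually nilpotent I would once more exploit the K\"{a}hler condition: it forces the semisimple part of the adjoint action of $G$ on its nilradical to have unimodular eigenvalues --- exactly the property that fails for the Co\oe ur\'{e}--Loeb example \cite{CL85}, whose defining matrix has non-unit eigenvalues and which is indeed not K\"{a}hler. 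Unimodularity makes $\Gamma$ a finitely generated group of polynomial growth, hence virtually nilpotent; alternatively the same induction on dimension shows the displayed extension is virtually nilpotent at each stage. The principal risk here is justifying rigorously that being K\"{a}hler constrains these eigenvalues, a point I would trace back to the obstruction used in the fiber computation.
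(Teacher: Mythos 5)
You should first note that the paper itself contains no proof of this statement: Theorem \ref{OeRi88} is imported verbatim from \cite{OR88}, so the only benchmark is the Oeljeklaus--Richthofer argument, and your outline does not reach it. The first concrete error is your claim that $\mathscr{O}(F)\cong\mathbb{C}$ holds ``from the construction'' of the holomorphic reduction. It does not: the paper warns in \S\ref{hreduction} that the fiber of a holomorphic reduction may carry non-constant holomorphic functions, and Example \ref{CL} (the C{\oe}ur\'{e}--Loeb solvmanifold, discussed in \S\ref{CRSmanifolds}) exhibits a solvmanifold whose holomorphic reduction has fiber $\mathbb{C}^*\times\mathbb{C}^*$ --- as far from Cousin as possible. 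Thus the constancy of functions on $F$, the connectedness of $F$ (equivalently $J=J^0\cdot\Gamma$, which you also assume silently when writing $F=J^0/(J^0\cap\Gamma)$), and the abelianness of $J^0$ are all part of what the K\"{a}hler hypothesis must be shown to deliver; only the Steinness of $Y$, where your citation of \cite{HO86} is appropriate, comes for free from the literature.

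The heart of your argument fails on two counts. First, the ideal $\mathfrak{m}=\mathfrak{g}_0\cap i\mathfrak{g}_0$ you invoke presupposes a closed real subgroup $G_0$ containing $\Gamma$ cocompactly; by Malcev \cite{Mal49} such a subgroup exists when $G$ is nilpotent, but the paper states explicitly in \S\ref{CRSmanifolds} that its existence is unknown for general solvable $G$, and the obstruction $\mathfrak{m}\cap\mathfrak{g}'_0=0$ of \cite{OR87} is derived precisely in that cocompact setting --- so the tool is unavailable where you need it. Second, even granting $\mathfrak{m}$, the inference ``$\mathfrak{m}\cap\mathfrak{g}'=0$ and $\mathfrak{m}\neq 0$, hence $\mathfrak{g}'=0$'' is a non sequitur: both conditions hold simultaneously in any non-abelian solvable algebra in which $\mathfrak{m}$ is transverse to the commutator ideal, so nothing collapses the derived series. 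The same missing input undermines your treatment of $\pi_1(X)$: the extension $1\to A\to\Gamma\to\pi_1(Y)\to 1$ with $A$ abelian is fine as far as it goes, but virtual nilpotency requires exactly the unimodularity constraint on the adjoint eigenvalues that you concede you cannot justify, and the polynomial-growth transfer from $G$ to $\Gamma$ is only valid when $\Gamma$ is cocompact in something --- again the nonexistent real form. The actual proof in \cite{OR88} is a substantially different analysis (closedness of nilradical and commutator orbits, and a genuine use of the K\"{a}hler form rather than the compact-fiber criterion of \cite{OR87}), and your sketch, while architecturally sensible, restates the two hard assertions rather than proving them.
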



\begin{theorem}\cite{GO08}  \label{GO08}
Suppose $G$ is a connected, solvable, complex Lie group and $H$
is a closed complex subgroup of $G$ with $X := G/H$ a K\"{a}hler manifold. 
Let 
\[
     G/H\; \longrightarrow \;   G/I
\]
be the holomorphic reduction. Then there is a subgroup of finite index $\hat{I}\subset I$
such that the bundle
\[
\hat{X}:=G/\hat{I}\cap H \to G/\hat{I}
\]
is a holomorphic $I^{0}/H\cap I^{0}$- principal bundle 
and represents the holomorphic reduction of $\hat{X}$. 
\end{theorem}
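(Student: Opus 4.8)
\begin{sketch}[Proof sketch]
The plan is to analyze the fiber of the holomorphic reduction, show that after passing to a suitable finite-index subgroup its identity component becomes a Cousin group that serves as a principal structure group, and then verify that the resulting fibration is again the holomorphic reduction of the total space. I would begin by recording the two defining features of $G/H \to G/I$: the base $G/I$ is holomorphically separable (indeed Stein, since it is the base of the holomorphic reduction of a K\"ahler solvmanifold), and the fiber $I/H$ satisfies $\mathscr{O}(I/H) = \mathbb{C}$.

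Next I would pin down the identity component of the fiber. Set $H_1 := H \cap I^0$, so that the $I^0$-orbit of the base point is $I^0/H_1$, a connected K\"ahler solvmanifold with $\mathscr{O}(I^0/H_1) = \mathbb{C}$ (holomorphic functions restrict from $I/H$). To turn this orbit into a group I must show $H_1 \triangleleft I^0$, and I would do so in two moves. First, consider the normalizer fibration $I^0/H_1 \to I^0/N$ with $N := N_{I^0}(H^0)$; by Lie's flag theorem the base $I^0/N$ is holomorphically separable (the $I^0$-orbit of $\mathfrak{h}^0$ in the relevant Grassmannian of $\mathfrak{i}^0$ is of the form $(\mathbb{C}^*)^s \times \mathbb{C}^t$), and since $\mathscr{O}(I^0/H_1) = \mathbb{C}$ forces every function on a separable quotient to be constant, the base is a point and $N = I^0$, i.e.\ $I^0$ normalizes $H^0$. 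Second, $I^0/H^0$ is then a connected solvable complex Lie group and $I^0/H_1 = (I^0/H^0)/(H_1/H^0)$ is a K\"ahler solvmanifold with discrete isotropy and $\mathscr{O} = \mathbb{C}$, so Theorem \ref{OeRi88} identifies it as a Cousin group. In particular $H_1/H^0$ is normal, whence $H_1 = H \cap I^0 \triangleleft I^0$ and $I^0/(H \cap I^0)$ is the Cousin group that will be the structure group.

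Finally I would treat the component group $Q := I/I^0$ and produce $\hat{I}$. Writing $\bar{H} := H I^0/I^0 \subseteq Q$, the connected components of $I/H$ are indexed by $Q/\bar{H}$; I would argue this set is finite, using that $\pi_1(X)$ contains a nilpotent subgroup of finite index (Theorem \ref{OeRi88}), and then select a finite-index subgroup $\hat{I} \le I$ with $I^0 \subseteq \hat{I}$, with $\hat{I}/I^0 \subseteq \bar{H}$, and with $\hat{I} \subseteq N_I(H)$. For such a choice one has $\hat{I} = I^0 (\hat{I} \cap H)$ and $\hat{I} \cap H \triangleleft \hat{I}$, so $\hat{X} = G/(\hat{I} \cap H) \to G/\hat{I}$ is a holomorphic principal bundle with group $\hat{I}/(\hat{I} \cap H) \cong I^0/(H \cap I^0)$, the Cousin group found above. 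To see it is the holomorphic reduction of $\hat{X}$, note that $G/\hat{I} \to G/I$ is a finite covering of a Stein manifold, hence Stein and holomorphically separable, while the fiber is a Cousin group with no non-constant holomorphic functions; thus $G/\hat{I}$ is the maximal holomorphically separable quotient of $\hat{X}$.

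The step I expect to be the main obstacle is the last one: simultaneously arranging that $\hat{I}$ has finite index, that its image in $Q$ lies in $\bar{H}$ (which requires the finiteness of $Q/\bar{H}$), and that $\hat{I}$ normalizes $H$ so that $\hat{I} \cap H$ is genuinely normal in $\hat{I}$. The delicate point is the conjugation action of the component group $Q$ on the Cousin group $I^0/(H \cap I^0)$, and I expect the nilpotent-of-finite-index structure of $\pi_1(X)$ to be exactly the input that lets one pass to a finite-index subgroup on which this action is compatible with $H$.
\end{sketch}
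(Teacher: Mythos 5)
A preliminary remark: the thesis contains no proof of this statement --- it is quoted verbatim from \cite{GO08} --- so your sketch can only be measured against the argument of that source, fragments of which the thesis does invoke later (in the proof of Proposition \ref{torbdle}).

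The middle of your sketch is sound and uses exactly the thesis's toolbox: the normalizer fibration plus Lie--Snow to get $N_{I^0}(H^0)=I^0$, then Theorem \ref{OeRi88} to identify $I^0/(H\cap I^0)$ as a Cousin group, and at the end the observation that a finite cover of a Stein base is Stein while Cousin fibers carry no non-constant functions, so $G/\hat{I}$ is indeed the holomorphic reduction of $\hat{X}$. But your opening premise is misstated: $\mathscr{O}(I/H)=\mathbb{C}$ is \emph{not} a defining feature of holomorphic reductions --- the thesis's own example in \S\ref{hreduction}, with $G=SL(2,\mathbb{C})$ and fiber $J/H\cong\mathbb{C}^*$, refutes this. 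In the solvable K\"ahler setting it is a theorem, namely part of Theorem \ref{OeRi88}, which moreover gives the fiber connected; once you quote that, your entire discussion of the component set $Q/\bar{H}$ collapses to the statement $I=H\cdot I^0$.

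The genuine gap is the step you yourself flag as the main obstacle, and the tool you propose for it cannot work. What you need is that the conjugation representation $\alpha:H\to{\rm Aut}\bigl(I^0/(H\cap I^0)\bigr)$ has \emph{finite image}, so that $\hat{I}:=\ker\alpha\cdot I^0$ has finite index; virtual nilpotency of $\pi_1(X)$ does not deliver this. Concretely, let $u=\bigl(\begin{smallmatrix}1&1\\0&1\end{smallmatrix}\bigr)$ act on $\mathbb{C}^2$, set $G=\mathbb{C}\ltimes_{e^{tN}}\mathbb{C}^2$ with $e^N=u$, and $\Gamma=\mathbb{Z}\ltimes\mathbb{Z}[i]^2$. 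Here $\pi_1(G/\Gamma)\cong\Gamma$ is nilpotent (not merely virtually so), the holomorphic reduction is a torus bundle over $\mathbb{C}^*$ with $I^0=\mathbb{C}^2$, yet $\alpha$ has infinite order and one checks directly that \emph{no} finite-index $\hat{I}\subset I$ makes $\hat{I}\cap\Gamma$ normal in $\hat{I}$: normality forces $(1-u^m)w\in\mathbb{Z}[i]^2$ for all $w\in\mathbb{C}^2$, which fails for every $m\neq 0$. This $X$ is of course not K\"ahler, and that is precisely the point: the K\"ahler hypothesis must enter at this step through machinery strictly stronger than the nilpotency of the fundamental group. The actual proof in \cite{GO08} passes to the Hochschild--Mostow hull $G_a=(\mathbb{C}^*)^k\ltimes G$ of \cite{HM64} and shows that the Zariski closure $G_a(\Gamma)$ of the isotropy is nilpotent \emph{and contains} $I^0$; the finite index comes from the finiteness of component groups of algebraic groups, and the needed triviality of the action comes from the fact that the isotropy then centralizes $I^0$ after passing to this finite index --- exactly the fact the thesis quotes in the proof of Proposition \ref{torbdle} (``$\Gamma$ centralizes $J^0$, see the proof of Theorem 1 in \cite{GO08}''). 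So your architecture is reasonable, but the decisive finiteness step is unproved, and the input you propose for it is demonstrably insufficient.
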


\subsection{$G$ semi-simple}\label{semisimple}
    
Let $G$ be a semisimple complex Lie group and recall that $G$ admits the structure of  
a linear algebraic group.   
A Borel subgroup of $G$ is a maximal connected solvable subgroup of $G$.
A parabolic subgroup is a subgroup of $G$ containing a Borel subgroup.     
If $P$ is a parabolic subgroup of $G$, then the manifold $G/P$ is a flag manifold.
For information on flag manifolds we refer the reader to \cite{Wol69} and \S 3.1 in \cite{Akh95}.   

\begin{theorem}[\cite{BO88} and Corollary 4.12 in \cite{GMO13}]   \label{BO}
Let $G$ be a semi-simple complex Lie group and $H$ a closed complex subgroup of $G$. 
Then the following conditions are equivalent: 
\begin{description}
\item (i) H is algebraic.
\item (ii) the homogeneous space $G/H$ is K\"{a}hler.
\end{description}
\end{theorem}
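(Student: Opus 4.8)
The proof divides according to the two implications, with $(i)\Rightarrow(ii)$ routine and $(ii)\Rightarrow(i)$ carrying all the content. For $(i)\Rightarrow(ii)$: if $H$ is algebraic, then by Chevalley's theorem on quotients of linear algebraic groups the homogeneous space $G/H$ carries the structure of a quasi-projective variety; being homogeneous it is smooth, hence realised as a locally closed complex submanifold of some $\mathbb{P}_N$. Since a complex submanifold of a K\"ahler manifold is again K\"ahler (restrict the Fubini--Study form), $G/H$ is K\"ahler.

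For $(ii)\Rightarrow(i)$, assume $X=G/H$ is K\"ahler with form $\omega$. Passing to the universal cover I may take $G$ simply connected, so that $G$ is $2$-connected and $H_2(G,\mathbb{R})=0$; this changes neither the quotient nor the algebraicity of the isotropy. Let $\hat H$ be the Zariski closure of $H$ in the affine algebraic group $G$; it is an algebraic subgroup, and proving $H$ algebraic amounts to proving $H=\hat H$. Consider the holomorphic fibration
$$ \hat H/H \;\longrightarrow\; G/H \;\longrightarrow\; G/\hat H, $$
whose base is quasi-projective by the easy direction and whose fibre $F:=\hat H/H$ is a closed complex submanifold of $X$, hence itself K\"ahler. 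Because $\hat H=H\cdot\hat H^0$ is a union of cosets of $\hat H^0$ and so is Zariski closed, the fibre equals $\hat H^0/(H\cap\hat H^0)$, and I may assume $\hat H$ connected and $B:=H\cap\hat H^0$ Zariski dense in it; the goal becomes $B=\hat H$.

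Here one must locate the non-algebraicity. Connected semisimple and connected unipotent complex subgroups of $G$ are automatically algebraic, so any failure of $B$ to be Zariski closed is confined to the toroidal directions of a maximal torus of $\hat H$; by the classification of abelian complex Lie groups in \S\ref{cocaclg}, Zariski density forces the associated quotient to admit no nonconstant characters, i.e.\ to be a Cousin group. Thus if $H$ is not algebraic the fibre $F$ acquires a positive-dimensional Cousin factor, and in particular contains the compact real torus noted in \S\ref{cocaclg}. I would then contradict the K\"ahler hypothesis by positivity: in the model situation where this factor is a compact complex torus it contains an elliptic curve $C$ with $\int_C\omega>0$, so $[C]\neq 0$ in $H_2(X,\mathbb{R})$; yet, because $G$ is $2$-connected, the base $G/\hat H$ is of affine-cone type (Theorem~\ref{AG941}), and a semisimple group has no nontrivial characters, the fibre class transgresses to zero in the Leray sequence, giving $[C]=0$ and a contradiction. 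Hence $F$ is a point, $B=\hat H$, and $H$ is algebraic.

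The main obstacle is the general toroidal case, where the Cousin factor is noncompact and contains no compact complex subtorus, so the bare volume argument does not apply and fibrewise K\"ahlerness is by itself no obstruction (an elliptic curve is even projective); one must genuinely exploit how the algebraic group $\hat H$ sits inside the semisimple $G$. I expect to handle this through the Leray spectral sequence of $G/H\to G/\hat H$ combined with the $2$-connectedness of $G$ and the vanishing of its character group, replacing the compact curve by the compact real torus of the Cousin fibre together with a Hodge-theoretic, equivalently invariant plurisubharmonic, estimate. This is precisely the technical heart carried out in \cite{BO88} and refined in \cite{GMO13}.
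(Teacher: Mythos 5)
First, a point of comparison: the paper does not prove this theorem at all — it is quoted as a known result from \cite{BO88} and Corollary 4.12 in \cite{GMO13}, whose proofs run through invariant plurisubharmonic functions and analytic hypersurfaces on semisimple groups (respectively, pseudoconvexity arguments). Your easy direction (i)$\Rightarrow$(ii) is fine and standard (Chevalley quotient is quasi-projective, restrict the Fubini--Study form), and your reductions for (ii)$\Rightarrow$(i) — universal cover, Zariski closure $\hat H = H\cdot\hat H^{0}$, fibration $\hat H/H \to G/H \to G/\hat H$, replacing $H$ by the Zariski-dense $B = H\cap\hat H^{0}$ — are all correct. But since you end by deferring ``the technical heart'' to \cite{BO88} and \cite{GMO13}, what you have is a reduction plus a citation of the result to be proved, not a proof.

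The genuine gap is your structural claim that ``Zariski density forces the associated quotient to admit no nonconstant characters, i.e.\ to be a Cousin group,'' on which the entire positivity argument rests. This is false: take $\hat H = \mathbb{C}\times\mathbb{C}^{*}$ (a unipotent subgroup times a maximal torus, embedded for instance in $SL(2,\mathbb{C})\times SL(2,\mathbb{C})$) and $B = \{(z,e^{z}) : z\in\mathbb{C}\}$. Then $B$ is a closed, connected, non-algebraic complex subgroup that is Zariski dense in $\hat H$ (expand an invariant Laurent polynomial $\sum_n p_n(w)u^{n}$ and use independence of the $e^{nz}$), yet the holomorphic character $(w,u)\mapsto ue^{-w}$ identifies $\hat H/B$ with $\mathbb{C}^{*}$ — holomorphically separable, Stein, containing no compact complex torus whatsoever. (The paper makes the analogous cautionary point in the example of section \ref{hreduction}: such fibers can be $\mathbb{C}^{*}$ rather than Cousin.) So precisely in a typical non-algebraic case your compact real torus and your elliptic curve $C$ do not exist, and the volume argument has nothing to act on. Independently, even in your ``model situation'' the vanishing $[C]=0$ in $H_{2}(X,\mathbb{R})$ is unsubstantiated: Theorem~\ref{AG941} is inapplicable (it requires $\mathscr{O}(X)\neq\mathbb{C}$ and $d_{X}\le 2$, neither of which you have), and $2$-connectedness of $G$ controls $\pi_{2}(X)\cong\pi_{1}(H)$, not the image of $H_{2}(F)$ in $H_{2}(X)$ — for honestly K\"ahler torus bundles (e.g.\ products $T\times Y$) the fiber class is nonzero, so any vanishing argument must invoke non-algebraicity in an essential way, which your sketch never does. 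These two steps are exactly where the plurisubharmonic-function machinery of \cite{BO88} enters, and no purely topological shortcut of the kind you propose is known.
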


One important consequence of this theorem is that when $X$ is
a K\"{a}hler manifold under the transitive action of a semisimple group with discrete isotropy, 
then the isotropy group is automatically a finite set of points.
One should note that since there is no known classification of discrete subgroups of complex Lie groups, 
even in low dimensional cases such as $SL(2,\mathbb C)$, 
the classification of semisimple homogeneous manifolds is impossible at the moment.


\subsection{G linear algebraic}

\begin{theorem}  \label{linearalgebraic}   
Let $G$ be a connected complex linear algebraic group and $H$ an algebraic subgroup of $G$. 
Set $X:=G/H$.
\begin{enumerate}
\item \cite{Akh83} 
Suppose  $d_X=2$. 
Then the space $X$ is a twisted product $X=G\times_{P}F$.
Here $P$ is a parabolic subgroup of $G$ and 
the manifold $F$ is isomorphic to $\mathbb A^1$, $\mathbb C^*\times \mathbb C^*$, 
or $\mathbb{P}_{2}\setminus Q$ with $Q$ a quadric,
and the transitive operation $P\times F\to F$ is given by affine transformation, 
by group translations, or by projective transformations preserving $Q$.  
\item \cite{Akh77} If $d_{G/H}=1$, 
then there exists a parabolic subgroup $P$ of $G$ containing $H$ such that $G/H\to G/P$ is 
a $\mathbb C^*$-bundle over the flag manifold $G/P$.
\end{enumerate}
\end{theorem}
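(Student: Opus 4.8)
The plan is to combine the structure theory of linear algebraic groups with the additivity of $d$ under algebraic fibrations, thereby reducing both statements to the classification of a single low-dimensional affine fiber. First I would use the Levi--Malcev decomposition $G = S \ltimes R_u(G)$ (Theorem \ref{Levimalcev}) together with the algebraicity of $H$ to produce the canonical fibration of $X$ over a flag manifold. Concretely, one takes the (smallest) parabolic subgroup $P$ of $G$ containing $H$ for which the fiber $F := P/H$ is quasi-affine, i.e. the maximal flag-manifold quotient of the algebraic homogeneous space $X$; this yields a $G$-equivariant holomorphic bundle
\[
    F \; \longrightarrow \; X = G/H \; \longrightarrow \; G/P
\]
realizing $X$ as the twisted product $G \times_P F$, which is exactly the fibration appearing in the statement. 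The base $G/P$ is a flag manifold, hence a connected compact orientable manifold, so its top $\mathbb Z_2$-homology is nonzero and $d_{G/P} = 0$.

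Next I would invoke additivity of $d$ over a fibration with connected fiber, the same mechanism underlying the relation $d_X = d_C + d_A$ in the main theorem, to get $d_X = d_F + d_{G/P} = d_F$. Thus the hypothesis $d_X = 1$ (respectively $d_X = 2$) is equivalent to $d_F = 1$ (respectively $d_F = 2$), and the problem collapses to classifying the affine homogeneous space $F$. Here I would simplify further by observing that the unipotent radical of $P$ contributes only $\mathbb C$-factors, which are invisible to the boundary and so do not change $d_F$; hence $F$ may be analyzed as a homogeneous space of a reductive group (or, in the solvable situation, of a torus).

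Finally I would classify $F$ by passing to an equivariant algebraic compactification $\bar F$ and reading off $d_F$ from the boundary $\bar F \setminus F$ via Lefschetz duality and the long exact sequence of the pair $(\bar F, \bar F \setminus F)$; the codimension of the top homology of $F$ is governed by the divisorial part of the boundary and by the number of its components. For $d_F = 1$ a rank count forces a single one-dimensional affine fiber, so $F \cong \mathbb C^*$ and the projection is a $\mathbb C^*$-bundle, giving part (2). For $d_F = 2$ the surviving affine surfaces are precisely $\mathbb A^1$ (with $P$ acting affinely), $\mathbb C^* \times \mathbb C^*$ (with $P$ acting by translations), and $\mathbb P_2 \setminus Q$ (with $P$ acting by projective transformations preserving the quadric $Q$), which reassemble into $G \times_P F$ and give part (1). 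The hard part will be this last classification, and especially isolating the exceptional model $\mathbb P_2 \setminus Q$: it is affine but neither a product nor a linear bundle, so ruling out every other affine homogeneous surface requires a case analysis of the low-rank groups ($SL_2$ and $\mathrm{SO}_3 \cong PSL_2$) acting on a surface with a preserved boundary conic, together with the verification that its $\mathbb Z_2$-homology (that of the real projective plane) does have its top nonvanishing class in codimension two. Securing the additivity of $d$ over the parabolic fibration in sufficient generality---connected fiber but possibly disconnected structure group---is the other technical point I would need to nail down before the reduction in the first two steps is legitimate.
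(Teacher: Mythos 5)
First, a point of comparison: the paper contains no proof of Theorem \ref{linearalgebraic} at all; it is quoted as background directly from Akhiezer's papers \cite{Akh77} and \cite{Akh83}. So your sketch can only be judged on its own merits, and there it has a genuine gap. The fatal step is your claim that ``the unipotent radical of $P$ contributes only $\mathbb C$-factors, which are invisible to the boundary and so do not change $d_F$,'' reducing the classification of $F$ to homogeneous spaces of reductive groups. This is false: $d_{\mathbb C} = 2$ while $d_{\mathbb C^*} = 1$, and by the very formula the paper records in Example \ref{calculatingdX} (Lemmas 1 and 2 of \cite{Akh83}), namely $d_{G/H} = \dim_{\mathbb C} G - \dim_{\mathbb C} H + \dim_{\mathbb C} U - \dim_{\mathbb C} V$, every unipotent direction in the fiber contributes $2$ to $d$, not $0$. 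Worse, the case your reduction would erase is precisely one of the three models in part (1): $F = \mathbb A^1$ with $P$ acting by affine transformations is $\mathrm{Aff}(\mathbb C)/\mathbb C^*$, a purely unipotent direction with $d_F = 2$. Discarding unipotent contributions would collapse this fiber to a point and your classification could never output $\mathbb A^1$, so the method cannot recover the stated list.

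There is a second, structural gap: the existence of a parabolic $P \supseteq H$ whose fiber $P/H$ is of the required type is not a preliminary normalization but the actual content of Akhiezer's theorems. Producing even a proper parabolic containing a non-reductive $H$ with $R_u(H) \subseteq R_u(P)$ requires the Borel--Tits mechanism (the Theorem in \S 30.4 of \cite{Hum75}, which the paper invokes elsewhere), and minimality of $P$ only yields a quasi-affine fiber, not one of the three models; your ``rank count'' for $d_F = 1$ does not by itself exclude, say, $F = \mathbb C^2 \setminus \{0\}$, which also has $d = 1$ and is eliminated only by enlarging $P$ so that the $\mathbb C^*$-fibration over $\mathbb P_1$ is absorbed into the base. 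Ironically, the one point you single out as needing care --- additivity of $d$ over the parabolic fibration with possibly disconnected structure group --- is the unproblematic one: since $H$, $P$, $G$ are all algebraic, $d_{G/H} = d_{G/P} + d_{P/H}$ is exactly Example \ref{calculatingdX}, with no orientability hypothesis needed, in contrast to the non-algebraic fibration lemma (Lemma \ref{flemma}).
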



\subsection{$G$ reductive}

A complex Lie group is reductive if it is the complexification of a totally real maximal compact subgroup.   
Then a finite covering of any reductive complex Lie group has the form $S \times R$, where  $R=(\mathbb C^*)^k$.   
In passing, we note that here $S\unlhd G$. 
The following result had been proved earlier.  

\begin{theorem}[\cite{Mat60}, \cite{Oni60}]   
Let $G$ be a reductive complex Lie group, H be a closed complex subgroup.
Then $G/H$ is Stein if and only if $H$ is reductive.
\end{theorem}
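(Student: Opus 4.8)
The plan is to prove the two implications separately, working throughout with $G$ as an affine algebraic group. This is legitimate because a reductive complex Lie group is linear algebraic and, by the remark just above, has a finite covering of the form $S \times (\mathbb C^*)^k$; moreover I will pass freely between ``affine variety'' and ``Stein manifold'' using the standard fact (recorded in the examples of Section \ref{steinmanifolds}) that a closed complex submanifold of $\mathbb C^N$ is Stein. Since a finite covering of a Stein manifold is Stein and reductivity is insensitive to taking identity components and finite covers, I may also assume $G$ is connected.

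For the implication that $G/H$ Stein forces $H$ reductive, I would argue topologically, using precisely the vanishing theorem of Serre quoted above. Let $K$ be a maximal compact subgroup of $G$ and choose a maximal compact subgroup $K_H$ of $H$ inside $K$. By Mostow's theorem on the topology of homogeneous spaces of Lie groups with finitely many components, $G/H$ is diffeomorphic to a real vector bundle over the compact orbit $K/K_H$, hence homotopy equivalent to it. Because $G$ is the complexification of $K$ we have $\dim_{\mathbb R} K = \dim_{\mathbb C} G$, and likewise $\dim_{\mathbb R} K_H = \dim_{\mathbb C} L$, where $L$ is a Levi factor of $H$ (the unipotent radical $R_u(H)$ is isomorphic to an affine space and contributes nothing to the maximal compact subgroup). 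Writing $n = \dim_{\mathbb C} G/H$, a dimension count gives $\dim_{\mathbb R} K/K_H = \dim_{\mathbb C} G - \dim_{\mathbb C} L = n + \dim_{\mathbb C} R_u(H)$. If $H$ were not reductive, then $R_u(H) \neq \{e\}$ and this real dimension $d$ would exceed $n$; but $K/K_H$ is a closed manifold, so $H_d(K/K_H, \mathbb Z_2) = \mathbb Z_2 \neq 0$, whence $H_d(G/H, \mathbb Z_2) \neq 0$ with $d > n$. This contradicts Serre's vanishing theorem for the $n$-dimensional Stein manifold $G/H$, so $H$ must be reductive.

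For the converse, that $H$ reductive yields $G/H$ Stein, I would use invariant theory. Let $H$ act on the affine variety $G$ by right translations; its orbits are exactly the cosets $gH$, which are closed in $G$ since $H$ is closed. As $H$ is reductive, the Hilbert--Nagata finiteness theorem shows the invariant ring $\mathbb C[G]^H$ is finitely generated, so $G /\!/ H := \operatorname{Spec} \mathbb C[G]^H$ is an affine variety and the quotient morphism $\pi \colon G \to G /\!/ H$ separates closed orbits. Since here every orbit is already closed, $\pi$ separates all orbits and its fibres are precisely the cosets $gH$, so $G /\!/ H$ is the geometric quotient $G/H$. The right translation action is free with closed orbits, so $G \to G/H$ is a principal $H$-bundle and $G/H$ is a smooth affine variety, hence Stein.

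The hard part will be this converse direction, which is the substance of Matsushima's theorem. The delicate points are the finite generation of the invariants and, above all, the identification of the categorical (GIT) quotient $G /\!/ H$ with the honest homogeneous space $G/H$ rather than some coarser space of closed orbits; this is exactly where the closedness of the cosets $gH$ is indispensable. Once that is in hand, verifying smoothness of the quotient and invoking the affine $\Rightarrow$ Stein passage are routine.
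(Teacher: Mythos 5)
Your two-directional skeleton (Mostow fibration plus Serre vanishing one way, GIT the other) is a legitimate modern route to this theorem --- the paper itself gives no proof, citing Matsushima and Onishchik --- but as written it has a genuine gap: you silently assume throughout that $H$ is an \emph{algebraic} subgroup of $G$ with finitely many components, whereas the hypothesis is only that $H$ is a closed complex subgroup, and closed complex subgroups of a reductive group need be neither. Two concrete examples show where this bites. First, $H = \{2^n : n \in \mathbb Z\} \subset \mathbb C^* = G$ is closed with infinitely many components; here $G/H$ is an elliptic curve, which is \emph{not} homotopy equivalent to $K/K_H = \mathbb S^1$, so the Mostow fibration you invoke (valid for finitely many components, and stated in this thesis only for connected $H$) fails outright --- and note that in this example $\dim_{\mathbb R} K/K_H = 1 = \dim_{\mathbb C} G/H$, so even the strict inequality $d > n$ your Serre argument needs is false. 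Second, $H = \{(e^t, e^{it}) : t \in \mathbb C\} \subset (\mathbb C^*)^2$ is a closed \emph{connected} complex subgroup, isomorphic to $\mathbb C$, whose Zariski closure is all of $(\mathbb C^*)^2$; for such an $H$ the objects your dimension count is built on --- an algebraic Levi factor $L$ and a unipotent radical $R_u(H)$ --- simply do not exist, and cannot be recovered from the Zariski closure, which here is reductive even though $H$ is not. What your count actually requires, and what must be proved Lie-theoretically, is the statement that for a connected complex subgroup $H$ of a linear group one has $\dim_{\mathbb R} K_H \le \dim_{\mathbb C} H$ (compact subgroups of linear groups are totally real, by Liouville after conjugating into a unitary group), with equality precisely when $H$ is reductive; and the component group of $H$ must be dealt with separately.

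The same omission undermines the converse direction at its stated crux. To apply Hilbert--Nagata and speak of $\mathbb C[G]^H$ at all, the right-translation action of $H$ on $G$ must be algebraic, i.e., $H$ must be an algebraic subgroup; you never establish this. It is true --- every holomorphic finite-dimensional representation of a reductive complex Lie group is rational, applied to $H \hookrightarrow G \subset GL_N(\mathbb C)$ --- but it is precisely the step that makes GIT available. Relatedly, your justification that ``the cosets $gH$ are closed in $G$ since $H$ is closed'' conflates the Euclidean and Zariski topologies: Euclidean closedness of cosets is automatic for \emph{any} closed subgroup, including the non-algebraic $H$ above where no GIT quotient exists, and is useless for separating orbits by invariants; what you need is that translates of an \emph{algebraic} subgroup are Zariski closed, which again presupposes the missing algebraicity. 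Once ``reductive $\Rightarrow$ algebraic'' is inserted, the remainder of your converse (finite generation, identification of $G/\!/H$ with $G/H$, smoothness via the principal bundle $G \to G/H$, and affine $\Rightarrow$ Stein) is a correct and standard chain, and your forward direction becomes correct for algebraic $H$ with finitely many components; the non-algebraic and disconnected cases you have not addressed are exactly where the cited classical proofs do real work.
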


In passing we note that the K\"{a}hler setting is characterized by the following.  

\begin{theorem} [Theorem 5.1 in \cite{GMO13}]  
Suppose $G$ is a reductive complex Lie group with Levi--Malcev decomposition $G=S.R$ and 
let $H$ be a closed complex subgroup of $G$.   
Then $G/H$ is K\"{a}hler if and only if $S.H$ is closed in $G$ and $S\cap H$ is an algebraic subgroup of $S$.  
\end{theorem}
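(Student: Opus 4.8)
\begin{sketch}[Proof proposal]
The plan is to exploit that the semisimple factor is normal, $S\unlhd G$, and to reduce both implications to the semisimple case settled by Theorem \ref{BO}. First I would pass to a finite covering $\rho\colon G^{*}=S^{*}\times R^{*}\to G$, with $S^{*}$ the simply connected cover of $S$ and $R^{*}\cong(\mathbb C^{*})^{k}$, and replace $H$ by $H^{*}:=\rho^{-1}(H)$. This is harmless: a manifold is K\"{a}hler exactly when one (hence every) finite covering is --- pull the K\"{a}hler form back for one direction, and average the pulled-back form over the deck group of the Galois closure to descend it for the other --- while closedness of $S.H$ and algebraicity of $S\cap H$ are preserved under $\rho$ and under taking preimages. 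So assume $G=S\times R$ with $R\cong(\mathbb C^{*})^{k}$, and let $\pi\colon G\to A:=G/S\cong(\mathbb C^{*})^{k}$ be the projection. Because $S\subseteq S.H$ and $S$ is a direct factor, one checks $S.H=S\times\pi(H)$; hence \emph{$S.H$ is closed in $G$ if and only if $\Lambda:=\pi(H)$ is closed in the abelian group $A$}, and this is the statement I will really work with.

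Assume now $G/H$ is K\"{a}hler. The algebraicity of $S\cap H$ comes essentially for free: the $S$-orbit of the base point is the injectively immersed complex submanifold $S.eH\cong S/(S\cap H)$, and the pullback under this immersion of the K\"{a}hler form of $G/H$ is again closed and positive of type $(1,1)$, since restricting a positive Hermitian form to a complex subspace keeps it positive. Thus $S/(S\cap H)$ is K\"{a}hler, and Theorem \ref{BO} forces $S\cap H$ to be algebraic. The real content of this direction is therefore the closedness of $S.H$. To isolate it I set $L:=\overline{S.H}=S\times\overline{\Lambda}$, a closed subgroup, so that $G/H\to G/L$ is a holomorphic fiber bundle and the fiber $L/H$ is a closed complex submanifold of $G/H$, hence itself K\"{a}hler. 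On $L/H$ the normal factor $S$ acts with orbit $S/(S\cap H)$, and this orbit is \emph{dense} precisely because $S.H$ is dense in $L$. Everything now reduces to showing this dense orbit is in fact open, equivalently $\overline{\Lambda}=\Lambda$.

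Ruling out a proper dense orbit of the semisimple factor is the step I expect to be the main obstacle. The directions transverse to the dense orbit $S/(S\cap H)$ inside $L/H$ are accounted for by the central torus $\overline{\Lambda}$ acting modulo $\Lambda$, and the quotient $\overline{\Lambda}/\Lambda$ is of toroidal (Cousin) type, carrying a compact real torus exactly as in the density phenomenon analysed for abelian complex Lie groups earlier in the text. The plan is to feed the K\"{a}hler form of the submanifold $L/H$ into a maximum-principle argument along this compact real torus --- in the same spirit as the computation showing that $U_{1}/(U_{1}\cap\Gamma)$ is Cousin --- to force the putative transverse holomorphic directions to collapse, so that $\dim_{\mathbb C}(L/H)=\dim_{\mathbb C}S/(S\cap H)$. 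Since $L/H$ is connected and all of its $S$-orbits share this dimension, the orbit is then open and equals $L/H$; hence $\overline{\Lambda}=\Lambda$ and $S.H$ is closed, completing the forward direction.

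For the converse, suppose $S.H$ is closed and $S\cap H$ is algebraic. Then $G/H\to G/S.H$ is a holomorphic fiber bundle whose fiber $S/(S\cap H)$ is K\"{a}hler by Theorem \ref{BO}, and whose base $A/\Lambda$ is an abelian complex Lie group, hence K\"{a}hler: by the Remmert--Morimoto classification \cite{Mor66} it is a product of some $\mathbb C^{a}$, $(\mathbb C^{*})^{b}$ and a Cousin group, each a K\"{a}hler manifold. It remains to assemble a global K\"{a}hler form on the total space, which is the technical heart of this direction. Here I would use the canonical fibration of the fiber $S/(S\cap H)$ over a flag manifold with Stein fiber (either factor possibly trivial), trivialize the relevant homogeneous bundles over the Stein factors of the base by the Grauert--Oka principle (Theorem \ref{GOka}), and build the metric in stages out of the already-K\"{a}hler pieces --- the compact flag manifold, the Stein fiber, and the abelian base --- the delicate point being the global closedness and positivity of the resulting $(1,1)$-form. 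Combining the two implications with the bundle $G/H\to G/S.H$ then yields the stated characterization.
\end{sketch}
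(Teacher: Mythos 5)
Your framing is sensible --- the reduction to $G=S\times(\mathbb C^*)^k$ is harmless (in fact more easily than you argue: $\rho$ induces a biholomorphism $G^*/\rho^{-1}(H)\cong G/H$, so no averaging over a deck group is needed), and the algebraicity of $S\cap H$ via the immersed Kähler orbit $S/(S\cap H)$ and Theorem \ref{BO} is correct. But the key step of your forward direction has a genuine flaw: you set $L:=\overline{S.H}=S\times\overline{\Lambda}$ and treat $G/H\to G/L$ as a holomorphic bundle with Kähler fiber $L/H$, yet the closure of a complex subgroup need not be complex. For irrational $\alpha$ the image of $z\mapsto(e^z,e^{\alpha z})$ is a complex one-parameter subgroup of $(\mathbb C^*)^2$ whose closure is the real hypersurface subgroup $\{\,|w_2|=|w_1|^{\alpha}\,\}$, a CR subgroup carrying no complex structure; in such cases $L$ is not a complex subgroup, $L/H$ is not a complex manifold, and ``Kähler'' cannot even be formulated for it. Compounding this, your announced maximum-principle step is a hope rather than an argument: the computation in section \ref{cocaclg} propagates constancy of \emph{holomorphic functions} along a compact torus, and you give no mechanism by which a Kähler \emph{form} forces transverse directions to ``collapse.'' The available repair uses the normality of $S$ directly: since $gS.x=Sg.x$, all $S$-orbits in $G/H$ are translates of one another, hence equidimensional, and by Theorem 3.6 of \cite{GMO11} a non-closed orbit in this Kähler setting would have orbits of strictly smaller dimension in its boundary; therefore $S.H/H$ is closed. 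This is precisely the argument this paper records as Lemma \ref{Sorbitclosed} in the discrete case, and it is the route of the cited source (note the paper itself states the theorem as a quotation from \cite{GMO13}, without proof).

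The converse is also missing its heart, and the heuristic behind your assembly is false: a holomorphic bundle with Kähler fiber and Kähler base need not be Kähler --- the paper's own Heisenberg example (subsection \ref{non-compact}) is a torus bundle over $\mathbb C^*\times\mathbb C^*$ with Kähler fiber and base and non-Kähler total space. Moreover the tool you propose, Grauert--Oka (Theorem \ref{GOka}), requires a Stein base, whereas your base $A/\Lambda$ is typically a Cousin group, which admits no non-constant holomorphic functions and is as far from Stein as possible, so the proposed trivializations are unavailable. What actually carries this direction is structure you did not exploit: the radical is \emph{central}, so $G/H\to G/S.H$ is a flat-type bundle obtained from $S/(S\cap H)\times A$ by dividing out a discrete action through $\Lambda$, and one glues an $S$-invariant Kähler form on $S/(S\cap H)$ --- which exists once $S\cap H$ is algebraic, by Berteloot \cite{Be87} --- to the flat invariant form of the abelian base. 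Making that gluing closed and positive globally is the technical content of Theorem 5.1 in \cite{GMO13}; as written, your sketch records the difficulty but does not resolve it.
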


\begin{theorem} (Lemma in \S 2.4 in \cite{BO73}) \label{reduct}
Assume $G$ is a reductive complex Lie group.
Let $H$ be a closed subgroup with $\overline{H}$ as its Zariski closure.
Every $H$-invariant holomorphic function on $G$ is necessarily $\overline{H}$-invariant. 
\end{theorem}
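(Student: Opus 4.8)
The plan is to reduce the assertion to a statement about the right-translation stabilizer of a single holomorphic function, and then to exploit the fact that on a reductive group every holomorphic function expands in matrix coefficients of finite-dimensional representations, on each of which the Zariski closure is ``visible.'' By symmetry I may assume the action is by right translation, so that ``$H$-invariant'' means $f(gh)=f(g)$ for all $g\in G$, $h\in H$; write $R_g f(x):=f(xg)$. The goal becomes to show that the stabilizer $\mathrm{Stab}(f):=\{g\in G: R_g f=f\}$, which is a closed subgroup containing $H$, in fact contains $\overline{H}$. Since $\overline{H}$ is by definition the smallest Zariski-closed subgroup containing $H$, it suffices to exhibit enough algebraicity inside $\mathrm{Stab}(f)$.

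Next I would use the reductive structure $G=K^{\mathbb C}$, with $K$ a maximal compact real form, to decompose $f$ along the left regular action of $K$. Consider the Peter--Weyl projections $f_\lambda:=d_\lambda\int_K \overline{\chi_\lambda(k)}\,(L_k f)\,dk$, where $L_k f(x)=f(k^{-1}x)$; integrating holomorphic functions over the compact group $K$ again yields a holomorphic function, so each $f_\lambda$ is left-$K$-finite and holomorphic, hence left-$G$-finite, hence a regular (algebraic) function lying in the finite-dimensional bi-invariant isotypic block $V_\lambda^*\otimes V_\lambda\subset \mathbb C[G]$. Because $f_\lambda$ is built only from left translations, the projection commutes with the right translations $R_h$; therefore $R_h f=f$ for $h\in H$ forces $R_h f_\lambda=f_\lambda$, i.e.\ each piece $f_\lambda$ is again $H$-invariant.

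On the finite-dimensional space $V_\lambda^*\otimes V_\lambda$ the right translation is a rational representation of the algebraic group $G$, so the stabilizer of the vector $f_\lambda$ is a Zariski-closed subgroup of $G$. It contains $H$, hence contains the Zariski closure $\overline{H}$, and thus each $f_\lambda$ is $\overline{H}$-invariant. Writing the expansion $f=\sum_\lambda f_\lambda$ and evaluating at $x\bar h$ and at $x$ then yields $f(x\bar h)=\sum_\lambda f_\lambda(x\bar h)=\sum_\lambda f_\lambda(x)=f(x)$ for every $\bar h\in\overline{H}$, which is exactly the claim.

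The step I expect to be the main obstacle is the convergence of the matrix-coefficient expansion $f=\sum_\lambda f_\lambda$ on all of $G$, rather than only on the compact form $K$ where the Peter--Weyl theorem is classical. The point is that holomorphicity of $f$ on the whole (Stein) group $G$ forces the Peter--Weyl coefficients to decay fast enough to dominate the growth of the matrix coefficients off $K$; establishing this holomorphic Peter--Weyl theorem for complex reductive groups, equivalently that $\mathbb C[G]$ is dense in $\mathscr{O}(G)$ in the compact-open topology and that the isotypic expansion reconstructs $f$, is the genuine analytic content of the lemma. Once that convergence is in hand, the algebraic stabilizer argument above is routine, and the preservation of $H$-invariance under the left-$K$ projection is a direct computation.
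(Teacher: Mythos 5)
The thesis itself gives no proof of this statement---it is quoted directly from Barth--Otte \cite{BO73}---and your argument is in essence the original one: Barth--Otte likewise expand holomorphic functions on a reductive group into representative functions and exploit the fact that each isotypic component lies in a finite-dimensional rational $G$-module, where the stabilizer of a vector is Zariski closed, so that $H$-invariance of each component propagates to $\overline{H}$. Your outline is correct, including the two key mechanisms: the Peter--Weyl projections commute with right translations, and a left $K$-finite holomorphic function on $G=K^{\mathbb C}$ is left $G$-finite (by the identity principle, since $K$ is totally real of maximal dimension in $G$) and hence regular, because finite-dimensional holomorphic representations of a reductive complex group are rational. One simplification worth recording: the locally uniform convergence of the holomorphic Peter--Weyl series, which you single out as the main analytic obstacle, is true but not needed---for fixed $\bar h\in\overline{H}$ set $g(x):=f(x\bar h)-f(x)$ and observe that every isotypic projection of $g$ under the left $K$-action vanishes (since each $f_\lambda$ is already $\overline{H}$-invariant), whence $g\equiv 0$ by mere completeness of the matrix coefficients for the continuous $K$-action on $\mathscr{O}(G)$, with no summation of the expansion required.
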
   

In other words, for $G$ reductive the holomorphic reduction $G/J$ of the complex homogeneous space 
$G/H$ is also the holomorphic reduction for $G/\overline{H}$ and so the holomorphic reduction map factors  
through $G/\overline{H}$ as is given in the following commutative diagram   
\[  
     \begin{array}{ccccc}    G/H & & \longrightarrow & & G/\overline{H} \\  
               & \searrow & & \swarrow & \\  
               & & G/J & & \end{array}  
\]  
                 
In fact $J$ is algebraic (see Satz in \S 2.5 in \cite{BO73}).


\section{Akhiezer's question in K\"{a}hler setting}\label{Akhiezerquestion}

A K\"{a}hler manifold $X=G/\Gamma$ with discrete isotropy $\Gamma$
and $d_X\leq 2$ is a solvmanifold (see proposition \ref{solvmanifold}). 
This interesting phenomenon which we prove is related to      
a question of Akhiezer \cite{Akh84}.
The original question concerned the existence of analytic hypersurfaces in complex homogeneous manifolds.   
We consider the following variant of this question in the K\"{a}hler setting.


\subsubsection{Modified Question of Akhiezer.} 

Suppose $X := G/H$ is a K\"{a}hler homogeneous manifold that satisfies 
\begin{description}
\item (i) $\mathscr{O}(X)=\mathbb C$
\item (ii) There is no proper parabolic subgroup of $G$ which contains $H$
 \end{description}  
Is $G$ then solvable?   
  
Note that if $G$ is known to be solvable, then it follows that $X$ is a Cousin group by Theorem \ref{OeRi88},    
since in that theorem the base of the holomorphic reduction of $X$ will be a point.    
Hence $X$ will be isomorphic to the fiber of its holomorphic reduction which is a Cousin group. 

\medskip   
We will need the following.   

\begin{theorem}(Corollary 6, p. 49 in \cite{HO84}]  \label{HO84}
Let $G/H$ be a complex homogeneous manifold, and $N:=N_G(H^0)$. 
If $\mathscr{O}(G/H) = \mathbb C$ and if $N\neq G$, then $H$ is contained in a proper parabolic 
subgroup $P$ in $G$.
\end{theorem}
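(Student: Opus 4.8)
The plan is to reduce the statement to a property of the normalizer fibration $G/H \to G/N$ and then to realize its base as a single $G$-orbit inside a projective space, where the absence of non-constant holomorphic functions can be played off against Lie's flag theorem.

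First I would record two elementary reductions. Since the identity component $H^0$ is normal in $H$, the group $H$ normalizes $H^0$, so $H \subseteq N_G(H^0) = N$; consequently it suffices to produce a proper parabolic subgroup $P$ with $N \subseteq P$. Moreover, the normalizer fibration $\pi \colon G/H \to G/N$ is a surjective holomorphic map, so the pullback $\pi^* \colon \mathscr{O}(G/N) \to \mathscr{O}(G/H)$ is injective; from $\mathscr{O}(G/H) = \mathbb{C}$ we obtain $\mathscr{O}(G/N) = \mathbb{C}$. Thus I have replaced the problem by the following: given a homogeneous space $G/N$ with $\mathscr{O}(G/N) = \mathbb{C}$ and $N \neq G$, exhibit a proper parabolic containing $N$.

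Next I would give $G/N$ a concrete geometric model. Writing $k = \dim_{\mathbb{C}} \mathfrak{h}$ and letting $G$ act on the Grassmannian $\mathrm{Gr}(k, \mathfrak{g})$ through the adjoint representation, the point $[\mathfrak{h}]$ has stabilizer exactly $N$: for the connected subgroup $H^0$ one has $\mathrm{Ad}(g)\mathfrak{h} = \mathfrak{h}$ if and only if $g H^0 g^{-1} = H^0$, i.e.\ $g \in N_G(H^0)$, and the Lie algebra of $N$ is the normalizer $\mathfrak{n}_{\mathfrak{g}}(\mathfrak{h})$. Composing with the Plücker embedding produces a $G$-equivariant injective holomorphic immersion $\iota \colon G/N \hookrightarrow \mathbb{P}(V)$, where $V := \Lambda^{k} \mathfrak{g}$ and $G$ acts linearly via $\Lambda^{k}\mathrm{Ad}$. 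Its closure $Y := \overline{\iota(G/N)}$ is then a compact $G$-invariant subset of $\mathbb{P}(V)$.

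The parabolic should now be extracted from $Y$ by a fixed-point argument. Fixing a maximal connected solvable (Borel) subgroup $B \subseteq G$ and applying Lie's flag theorem to the linear action of $B$ on $V$, one obtains a $B$-fixed point $y_0$ lying in a minimal non-empty closed $B$-invariant subvariety of $Y$, which is a closed, hence compact, orbit; its stabilizer $P := G_{y_0}$ contains $B$ and is therefore parabolic, and it is proper as soon as $y_0$ is not $G$-fixed. The crux, and the step I expect to be the main obstacle, is to show that $\mathscr{O}(G/N) = \mathbb{C}$ forces the orbit $\iota(G/N)$ itself to be closed in $\mathbb{P}(V)$, so that $[\mathfrak{h}]$ already lies on a closed orbit and $N$ is conjugate into $P$ (with $N \neq G$ guaranteeing properness). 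Here the radical $R$ of $G$ (Theorem \ref{Levimalcev}) enters: by Lie's flag theorem together with Snow's theorem (Theorem 3.3 in \cite{Sno85}), every $R$-orbit in $\mathbb{P}(V)$ is biholomorphic to some $(\mathbb{C}^*)^s \times \mathbb{C}^t$ and hence Stein, so a non-closed orbit would carry non-constant holomorphic functions incompatible with $\mathscr{O}(G/N) = \mathbb{C}$. Making this incompatibility precise — controlling how the functions on the Stein $R$-orbits, with $R$ normal in $G$, descend to $G/N$ — is the delicate part of the argument. Once the orbit is known to be closed, identifying $N$ with a proper parabolic (for instance through the Tits fibration of the compact homogeneous space $\iota(G/N)$) finishes the reduced problem, and $H \subseteq N$ then yields the desired proper parabolic containing $H$.
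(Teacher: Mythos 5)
Your first two steps are correct and match the setup this paper itself records in section \ref{normalizerfibration}: $H$ normalizes $H^0$ so $H \subseteq N$, the pullback along $G/H \to G/N$ gives $\mathscr{O}(G/N) = \mathbb{C}$, and $G/N$ is exactly the $G$-orbit of $[\mathfrak h]$ in ${\rm Gr}(k,\mathfrak g) \subseteq \mathbb{P}(\Lambda^k\mathfrak g)$. The endgame is also sound in outline: if that orbit is compact, it is a flag manifold (Lemma \ref{flag}), so $N$, and hence $H$, lies in a proper parabolic subgroup. (Two caveats: the Borel fixed point theorem is an algebraic-category statement, so in the holomorphic category you should quote Lemma \ref{flag} directly rather than run a fixed-point argument on the closure $Y$; and note that the paper itself offers no proof to compare against -- the statement is imported verbatim from Corollary 6, p.~49 of \cite{HO84} -- so your argument must stand on its own.)

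The genuine gap is the crux step, which you flag yourself, but the mechanism you sketch for it is a non sequitur. You propose that a non-closed orbit is incompatible with $\mathscr{O}(G/N) = \mathbb{C}$ because the $R$-orbits are Stein. But by Snow's theorem \cite{Sno85} every positive-dimensional $R$-orbit in $\mathbb{P}(V)$ is Stein \emph{regardless} of whether anything is closed, and holomorphic functions on an orbit of a subgroup neither descend to nor extend across $G/N$: they live on the would-be fibers of $G/N \to G/R\cdot N$, and it is Stein \emph{bases}, not Stein fibers, that produce functions on a total space. If your incompatibility were valid, it would show outright that the radical acts trivially on any homogeneous space with $\mathscr{O} = \mathbb{C}$, which cannot be derived this way: a Cousin group has $\mathscr{O} = \mathbb{C}$ yet contains many Stein $\mathbb{C}^*$-orbits of subgroups, and the paper's own Lemma \ref{dlem2} explicitly treats the case $\mathscr{O}(R.\Gamma/\Gamma) \neq \mathbb{C}$ coexisting with $\mathscr{O}(G/\Gamma) = \mathbb{C}$ -- precisely the coexistence your sketch declares impossible. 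The argument consistent with the paper's toolkit runs in the opposite direction, via the commutator fibration of section \ref{commutatorfibration}: since $G' = \overline{G}'$ is algebraic with closed orbits in the base, one gets $G/N \to G/G'\cdot N$ with Stein base, so $\mathscr{O}(G/N) = \mathbb{C}$ forces $G = G'\cdot N$ and exhibits $G/N$ as an orbit of an algebraic group with algebraic isotropy; one must then show that a noncompact such algebraic homogeneous space always carries nonconstant holomorphic functions (using, e.g., that an algebraic subgroup not contained in a proper parabolic sits in a reductive subgroup, Theorem in \S 30.4 of \cite{Hum75}, whence a Stein base). That is the real content of Corollary 6 in \cite{HO84}, and your proposal does not supply it.
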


Next we prove that the radical orbits in some special setting are closed.
We use this fact to prove proposition \ref{solvmanifold}.
Note that the following example shows the radical orbits might not be closed in general.   

\begin{example}
Consider $G := \mathbb S^1 \times \mathbb R$, where we consider the 
circle as complex numbers of modulus one.  
Let $\Gamma$ be the discrete subgroup $\{ (\pi i k \alpha, k ) | k\in\mathbb Z \}$, where $\alpha$ is an irrational number.  
Note that $\Gamma . \mathbb R$ is not closed in $G$.    
For a complex version of this embed the $\mathbb S^1$ into the diagonal matrices in 
$S = SL(2,\mathbb C)$ which is biholomorphic to $\mathbb C^*$.
Set $G = S \times \mathbb C$. One can see that $\Gamma.R$ is not closed in $G$ where $R$ denotes the radical of $G$. 
\end{example}

\begin{lemma}  \label{radicalclosed}    
Suppose $G$ is a mixed connected complex Lie group, i.e., $G = S.R$ is a Levi--Malcev decomposition, 
where both $S$ and $R$ have positive dimension.  
Let $X:=G/\Gamma$, where the discrete isotropy $\Gamma$ is not contained in any proper parabolic subgroup of $G$. 
If there is no non-constant holomorphic function on $X$, then the radical orbits of $G$ are closed in $X$.
\end{lemma}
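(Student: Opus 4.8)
The plan is to pass to the semisimple quotient and to identify the closure of the radical orbit with a normal subgroup by means of Theorem \ref{HO84}.

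First I would reduce the statement to a condition on the image of $\Gamma$ in the semisimple quotient. Let $\pi \colon G \to G/R =: \overline{S}$ be the quotient by the radical, so that $\overline{S}$ is semisimple. Since $R$ is connected, solvable and normal it lies in every Borel subgroup, hence in every parabolic subgroup of $G$; thus the parabolic subgroups of $G$ are exactly the $\pi$-preimages of those of $\overline{S}$. Because $R$ is normal, the radical orbit through $g\Gamma$ is $gR\Gamma/\Gamma$, so all radical orbits are closed in $X$ if and only if $R\Gamma$ is closed in $G$, which holds if and only if $q := \pi(\Gamma)$ is closed in $\overline{S}$. As $G$ is second countable, $\Gamma$ and hence $q$ is countable, and a countable closed subgroup of a Lie group is discrete; so, writing $B := \overline{q}$ and $B^{0}$ for its identity component, $q$ is closed exactly when $B^{0} = \{e\}$. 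The hypothesis that $\Gamma$ lies in no proper parabolic of $G$ translates, via the correspondence above, into the statement that $q$, and therefore $B$, lies in no proper parabolic of $\overline{S}$. The whole lemma is thus reduced to proving $B^{0} = \{e\}$.

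Next I would produce a normal subgroup. Set $A := \overline{R\Gamma}$, a closed complex subgroup of $G$ containing $R$, with $A^{0} = \pi^{-1}(B^{0})$. Since the projection $G/\Gamma \to G/A$ is a surjective holomorphic map, pullback embeds $\mathscr{O}(G/A)$ into $\mathscr{O}(G/\Gamma) = \mathbb{C}$, so $\mathscr{O}(G/A) = \mathbb{C}$. Now I apply Theorem \ref{HO84} to $G/A$: if $N_{G}(A^{0}) \neq G$ then $A$, and with it $\Gamma$, would lie in a proper parabolic subgroup of $G$, contradicting the hypothesis. Hence $A^{0}$ is normal in $G$, and therefore $B^{0} = A^{0}/R$ is a connected normal subgroup of the semisimple group $\overline{S}$. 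Consequently $B^{0}$ is one of the almost-direct factors of $\overline{S}$; in particular it is complex semisimple, and writing $\overline{S} = B^{0}\cdot \overline{S}'$ with $\overline{S}'$ the complementary factor, the preimage $A^{0}$ has the form $S_{1}R$, where $S_{1}$ is a semisimple subgroup of a Levi factor mapping isomorphically onto $B^{0}$.

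It remains to exclude $B^{0}\neq\{e\}$, and this is the main obstacle; the idea is to induct on $\dim G$. If $\{e\}\subsetneq B^{0}\subsetneq \overline{S}$, then $A^{0}=S_{1}R$ is a proper connected normal subgroup of $G$ which is again mixed and has radical $R$, and its orbit $F_{0} := A^{0}/(A^{0}\cap\Gamma)$ is a closed complex submanifold of $X$. A short computation, using that $B/B^{0}$ is discrete, shows that $\pi(A^{0}\cap\Gamma)$ is dense in $B^{0}$; density then forces $A^{0}\cap\Gamma$ to lie in no proper parabolic of $A^{0}$ and forces the radical orbit $R\,(A^{0}\cap\Gamma)$ to be dense, hence not closed, in $F_{0}$. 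If one can establish that $\mathscr{O}(F_{0}) = \mathbb{C}$, the inductive hypothesis applied to $A^{0}$ gives that the radical orbits of $A^{0}$ are closed in $F_{0}$, a contradiction, forcing $B^{0}=\{e\}$. The delicate point, which I expect to be the crux, is exactly establishing $\mathscr{O}(F_{0}) = \mathbb{C}$ for the fiber (restriction from $X$ need not be surjective), together with ruling out the top case $B^{0} = \overline{S}$, in which $R\Gamma$ is dense in $G$. For the former I would exploit that $X \to G/A = \overline{S}'/\pi(\Gamma)$ is a homogeneous holomorphic bundle whose base also carries only constant holomorphic functions, combined with a maximum-principle argument along the dense radical orbit; the latter case I would attack by the same circle of ideas, showing that a dense projection into a nontrivial semisimple group is incompatible with $\mathscr{O}(X)=\mathbb{C}$ and the absence of proper parabolics.
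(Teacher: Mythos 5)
Your opening reductions are sound (the equivalence of closedness of all radical orbits with closedness of $R\Gamma$, the identity $A = \overline{R\Gamma} = \pi^{-1}(B)$, and the observation that everything comes down to $B^{0} = \{e\}$), but there is a genuine gap at the step where you invoke Theorem \ref{HO84} for $G/A$: nothing guarantees that $A$, equivalently $A^{0} = \pi^{-1}(B^{0})$, is a \emph{complex} subgroup of $G$. The topological closure of a complex subgroup of a complex Lie group is in general only a real subgroup --- $B^{0}$ could be, for instance, a compact real form sitting inside a factor of $\overline{S}$ --- so $G/A$ need not be a complex manifold, the assertion ``$\mathscr{O}(G/A) = \mathbb{C}$'' is not meaningful, and Theorem \ref{HO84}, which concerns complex homogeneous manifolds with closed \emph{complex} isotropy, does not apply. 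This is exactly the difficulty the paper's proof is engineered around: it sets $H := \overline{R\Gamma}$, passes iteratively from $\mathfrak{h}$ to $\mathfrak{h} + i\mathfrak{h}$ to produce a minimal closed connected complex solvable subgroup $I \supseteq H^{0}$, applies Theorem \ref{HO84} to the fibration $G/\Gamma \to G/J$ with $J := N_{G}(I)$, and uses the minimality of $I$ to conclude $\Gamma \subseteq H \subseteq J \subseteq P$, a contradiction. You cannot first establish normality of $A^{0}$ and deduce complexness afterwards (via ideals of $\overline{\mathfrak{s}}$ being complex), since the normality conclusion itself rests on the inapplicable theorem.

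The second and more fundamental omission is Auslander's theorem, proved using results of Zassenhaus \cite{Aus63}: the identity component of $\overline{R\Gamma}$ is \emph{solvable}. This is the decisive input of the paper's proof --- there it both rules out $I$ being normal (a connected solvable normal subgroup properly containing $R$ would contradict maximality of the radical, which is how the paper gets $J \neq G$ so that Theorem \ref{HO84} bites) and disposes of all the cases you struggle with. It would also close your own reduction instantly: with it, $B^{0} = A^{0}/R$ would be simultaneously solvable and (granting normality) an almost-direct semisimple factor of $\overline{S}$, hence trivial, with no induction at all; in particular the case $B^{0} = \overline{S}$, i.e.\ $R\Gamma$ dense in $G$, dies at once because $G = S.R$ is mixed and hence not solvable. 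Without it, your inductive scheme leaves open precisely the hard cases, as you yourself concede: establishing $\mathscr{O}(F_{0}) = \mathbb{C}$ for the $A^{0}$-orbit (restriction from $X$ is not surjective and nothing in your setup supplies this), and above all the top case $B^{0} = \overline{S}$, where $A = G$, the condition $N_{G}(A^{0}) = G$ holds vacuously, there is no proper normal subgroup to induct on, and you offer only ``the same circle of ideas.'' The dense case is the heart of the lemma, not a loose end, so the argument as written does not prove the statement; the repair is not more induction but the paper's combination of Auslander's solvability theorem, the complexification device, and Theorem \ref{HO84} applied to the normalizer of the minimal closed connected complex solvable group.
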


\begin{proof}
All $R$ orbits are closed if and only if $R.\Gamma$ is closed. 
To prove that $R.\Gamma$ is closed we assume it is not and derive a contradiction.
Let $H:={\overline{R.\Gamma}}$ where by bar we mean the topological closure.   
Auslander, using earlier results of Zassenhaus \cite{Aus63} 
proved that $H^0$ is solvable.    
If $H^0$ is not a complex Lie group, let $H_1$ be the connected Lie subgroup of $G$ 
associated to the complex Lie algebra $\mathfrak h_1:=\mathfrak h +i\mathfrak h$,   
where $\mathfrak h$ denotes the Lie algebra of $H^0$.  
Note that $H_1$ is solvable and complex.   
If $H_1$ is not closed, we continue this process which has to stop after a 
finite number of steps, because $G$ has finite dimension.

\medskip    
We let $I$ be the {\bf 'minimal' closed connected complex solvable} Lie group obtained in this manner. 
Define $J := N_G (I)$.
Note that $I$ is not the radical, since, by assumption, $R$ does not have closed orbits and so $J \not= G$.  
Now consider the fibration $G/\Gamma \to G/J$.  
Since $J$ is, by definition, the normalizer of a connected group 
we see that $G/J$ is an orbit in some projective space (see section \ref{normalizerfibration}).
Since $\mathscr{O}(G/\Gamma)\cong \mathbb C$, it then follows that $\mathscr{O}(G/J)\cong \mathbb C$.
Theorem \ref{HO84} applies, so $J$ is contained in a proper parabolic subgroup of $G$.
If we prove that $\Gamma\subset J$ then $\Gamma$ 
is contained in this proper parabolic subgroup which is a contradiction to our assumptions.
To see this first note that for any $g\in H$ we have $gH^0 g^{-1} = H^0$ and so we have $H^0 \subseteq gIg^{-1}\cap I$.
But $I$ is the minimal closed connected complex solvable Lie group that contains $H^0$,
so $gIg^{-1} = I$, hence $H \subseteq J$.   
But $\Gamma \subset H$. 
Thus $\Gamma \subset J$ and this contradiction proves that  $R.\Gamma$ is closed in $G$.
\end{proof} 

\medskip  
We first look at the case where $G = S \times R$, i.e., $G$ is the trivial product group of a maximal 
semisimple subgroup $S$ with the radical $R$ of $G$.  

\begin{lemma} [Lemma 6 in \cite{AG94}]   \label{lem6}  
Suppose $G$ is a complex Lie group whose Levi decomposition is a direct product $S\times R$. 
Let $\Gamma$ be a discrete subgroup of $G$ such that $S\cap \Gamma$ is finite. 
Then $\Gamma$ is contained in a subgroup of the form $A\times R$, 
where $A$ is an algebraic subgroup of $S$ such that its identity component $A^0$
is solvable.
\end{lemma}  

\medskip 
  
We will use the following theorem in chapter \ref{mainresults}.

\begin{theorem}[p. 116 in \cite{OR87} \& Theorem 2 in \cite{Gi04}] \label{Gi041}
Suppose $G$ is a connected simply connected complex Lie group
whose Levi-Malcev decomposition is a direct product $G = S \times R$.     
Suppose $\Gamma$ is a discrete subgroup of $G$ such that $X:=G/\Gamma$ satisfies the conditions 
\begin{itemize}
\item [] (a) $X$ is K\"{a}hler
\item [] (b) $\Gamma$ is not contained in a proper parabolic subgroup of $G$ and 
\item [] (c) $X$ does not possess non-constant holomorphic functions
\end{itemize}
Then $S = \{e\}$.   
That is, $G/\Gamma$ is a Cousin group.
\end{theorem}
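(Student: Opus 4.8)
\begin{sketch}[Proof proposal]
The plan is to argue by contradiction: I assume $S \neq \{e\}$ and derive a contradiction, after which the final assertion is immediate from the discussion preceding Theorem~\ref{HO84}. Indeed, once $S = \{e\}$ the group $G = R$ is solvable, $X = G/\Gamma$ is a K\"{a}hler solvmanifold with $\mathscr{O}(X) = \mathbb{C}$, and Theorem~\ref{OeRi88} forces the base of its holomorphic reduction to be a point, so that $X$ coincides with the Cousin fiber. My first reduction is to the \emph{mixed} case. If $R = \{e\}$, then $G = S$ is semisimple, so by Theorem~\ref{BO} the K\"{a}hler hypothesis~(a) makes $\Gamma$ algebraic; a discrete algebraic subgroup is finite, whence $X = S/\Gamma$ is an affine variety with $\mathscr{O}(X) \neq \mathbb{C}$, contradicting~(c). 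Hence if $S \neq \{e\}$ we also have $\dim R > 0$, and $G = S \times R$ is mixed, which is precisely the hypothesis of Lemma~\ref{radicalclosed}.

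Next I would exploit the closedness of the radical orbits. By Lemma~\ref{radicalclosed} the subgroup $R.\Gamma$ is closed, so $\Lambda := \pi_S(\Gamma)$ is a closed subgroup of $S$, where $\pi_S \colon G = S \times R \to S$ is the projection. Since $\Gamma$ is countable, so is $\Lambda$, and a countable closed subgroup of a Lie group is discrete; thus $\Lambda$ is discrete in $S$. The projection induces a holomorphic fibration $X = G/\Gamma \to G/R.\Gamma \cong S/\Lambda$, and pulling back holomorphic functions shows $\mathscr{O}(S/\Lambda) = \mathbb{C}$. Because $S$ is semisimple, hence reductive, Theorem~\ref{reduct} applies to $S/\Lambda$: its holomorphic reduction coincides with that of $S/\overline{\Lambda}$, where $\overline{\Lambda}$ is the (algebraic) Zariski closure of $\Lambda$. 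As $\mathscr{O}(S/\Lambda) = \mathbb{C}$ the reduction is trivial, so $\mathscr{O}(S/\overline{\Lambda}) = \mathbb{C}$ as well; for an algebraic subgroup of a semisimple group this forces $\overline{\Lambda}$ to be parabolic. If $\overline{\Lambda}$ were a \emph{proper} parabolic $P$ of $S$, then $\Gamma \subseteq \Lambda \times R \subseteq P \times R$, and $P \times R$ is a proper parabolic subgroup of $G$, contradicting hypothesis~(b). Therefore $\overline{\Lambda} = S$, i.e.\ $\Lambda$ is Zariski dense in $S$.

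It remains to eliminate the Zariski-dense case, and this is where I expect the real difficulty to lie and where the K\"{a}hler hypothesis~(a) is genuinely used. The plan is to show that $\Delta := S \cap \Gamma$ is finite and then to invoke Lemma~\ref{lem6}. Granting finiteness, Lemma~\ref{lem6} places $\Gamma$ inside a subgroup $A \times R$ with $A$ an \emph{algebraic} subgroup of $S$ whose identity component $A^{0}$ is solvable; then $\Lambda \subseteq A$, and since $A$ is Zariski closed while $\Lambda$ is Zariski dense we get $S = \overline{\Lambda} \subseteq A$, whence $A = S$ and $A^{0} = S$ is solvable, which is absurd for a nontrivial semisimple group. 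To establish finiteness of $\Delta$ I would argue that otherwise its Zariski closure is a positive-dimensional subgroup of $S$ normalized by the Zariski-dense group $\Lambda$, hence normal in $S$, and so contains a simple factor $S_{1}$ with $\Delta \cap S_{1}$ infinite and Zariski dense in $S_{1}$. The $S_{1}$-orbit of the base point is then a complex submanifold $S_{1}/(\Delta \cap S_{1})$ of the K\"{a}hler manifold $X$, so it is itself K\"{a}hler and complex parallelizable with Zariski-dense discrete isotropy in the semisimple group $S_{1}$; this contradicts the rigidity of such quotients, in the cocompact case directly through Wang's theorem (Lemma~\ref{Wang} and \cite{Wan54}), which says that a compact complex parallelizable K\"{a}hler manifold is a torus, and in general through the Borel density theorem combined with the same parallelizability obstruction.

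The hardest step is this last one: controlling $\Delta = S \cap \Gamma$ and converting Zariski density into a genuine geometric contradiction. The delicate points are that the $S_{1}$-orbit need not be closed a priori, so I must first verify that the relevant orbit, or its closure, is a K\"{a}hler homogeneous space to which Borel--Remmert type rigidity \cite{BR62} applies, and that the non-cocompact situation is really excluded. If the orbit-closure argument proves awkward, an alternative is to bypass the factor $S_{1}$ and prove directly that a nontrivial semisimple complex group admits no K\"{a}hler quotient by an infinite discrete Zariski-dense subgroup carrying only constant holomorphic functions, again via Wang's theorem after passing to a suitable cocompact reduction. Either route reduces the theorem to this single rigidity statement, and the preceding paragraphs show that it suffices.
\end{sketch}
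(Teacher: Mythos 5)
Your overall skeleton is sound and, in fact, close to the published argument: the thesis itself does not prove Theorem \ref{Gi041} but cites it (p.~116 in \cite{OR87}, Theorem 2 in \cite{Gi04}), and the ingredients you assemble are exactly the ones the paper develops around it. Lemma \ref{radicalclosed} does apply under hypotheses (b) and (c) to close the $R$-orbits, the direct-product structure then makes $\Lambda=\pi_S(\Gamma)$ closed and discrete, the fibration $X\to S/\Lambda$ kills nonconstant functions on $S/\Lambda$, and Lemma \ref{lem6} combined with Zariski density of $\Lambda$ gives the final contradiction. One point you state too quickly: $\mathscr{O}(S/\overline{\Lambda})=\mathbb{C}$ does not by itself force $\overline{\Lambda}$ to be parabolic. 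You need the dichotomy the paper uses in Theorem \ref{AG942}: by the Theorem in \S 30.4 of \cite{Hum75}, either $\overline{\Lambda}$ lies in a proper parabolic subgroup of $S$ (which, crossed with $R$, is a proper parabolic of $G$ and contradicts (b)), or $\overline{\Lambda}$ is reductive, in which case $S/\overline{\Lambda}$ is Stein by Matsushima--Onishchik and $\mathscr{O}=\mathbb{C}$ forces $\overline{\Lambda}=S$. That repairs the wording; the logic of your second paragraph then goes through.

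The genuine gap is your treatment of the finiteness of $\Delta:=S\cap\Gamma$, which you correctly identify as the crux but do not actually prove. First, the reduction to a simple factor fails: if $\overline{\Delta}^{0}$ is normal in $S$ it is indeed a product of simple factors, but it does not follow that $\Delta\cap S_{1}$ is infinite or Zariski dense in any factor $S_{1}$ --- a discrete group embedded ``diagonally'' via two inequivalent faithful representations can be Zariski dense in $S_{1}\times S_{2}$ while meeting each factor trivially. Second, the rigidity you invoke does not exist off the compact case: Wang's theorem (Lemma \ref{torus}, \cite{Wan54}) needs compactness, and noncompact K\"{a}hler parallelizable quotients of semisimple groups are plentiful ($S$ itself is Stein, as is $S/\Delta$ for $\Delta$ finite), so ``parallelizability obstruction plus Borel density'' proves nothing; nor is there any ``suitable cocompact reduction'' for a general Zariski-dense discrete subgroup --- infinite-covolume free subgroups of $SL(2,\mathbb{C})$ are Zariski dense and contained in no lattice. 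The correct tool is already in the paper and makes the whole paragraph unnecessary: by Lemma \ref{Sorbitclosed} (resting on \cite{GMO11}) the $S$-orbit $S/\Delta$ is closed in the K\"{a}hler manifold $X$, hence a closed complex submanifold and therefore K\"{a}hler; Theorem \ref{BO} (\cite{BO88}) then forces $\Delta$ to be algebraic, and a discrete algebraic group, having finitely many components, is finite --- precisely the remark following Theorem \ref{BO} and the step used in Lemma \ref{lem8}. With that substitution your proof closes; the concluding identification of $X$ as a Cousin group via Theorem \ref{OeRi88} is fine as you wrote it.
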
   

\medskip
Depending on the analytic behavior of the radical orbits there is an answer to the Akhiezer's question.
If the radical orbits do not possess any non-constant holomorphic functions then we have the following theorem
which basically presents us to the situation of the Theorem \ref{Gi041}. 

\begin{theorem}[Theorem 3 in \cite{Gi04}]  \label{Gi04}
Suppose $G$ is a connected complex Lie group and $\Gamma$ is a discrete
subgroup of $G$ such that $G/\Gamma$ satisfies the condition (a), (b), and (c)  of the Theorem \ref{Gi041}.
Let $R$ denote the radical of $G$. 
Assume the typical radical orbit has no non-constant holomorphic function, 
that is, $\mathscr{O}(R.\Gamma/\Gamma) = \mathbb C$.   
Then $S = \{e\}$, that is, G is solvable. 
\end{theorem}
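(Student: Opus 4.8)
The plan is to argue by contradiction: assume the semisimple factor $S$ is nontrivial and reduce the problem, via the radical fibration, to a purely semisimple situation, where Theorem \ref{BO} (equivalently, the direct-product statement of Theorem \ref{Gi041}) forces $S=\{e\}$. Throughout I would exploit that the extra hypothesis $\mathscr{O}(R.\Gamma/\Gamma)=\mathbb{C}$ is exactly what lets the non-direct, non-simply-connected case behave like the setting of Theorem \ref{Gi041}.

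First I would pass to the universal covering $\pi:\tilde G\to G$ and replace $\Gamma$ by $\pi^{-1}(\Gamma)$, which is again discrete and yields the same homogeneous space; so I may assume $G$ is simply connected with Levi--Malcev decomposition $G=S\ltimes R$. If $R=\{e\}$ then $G=S$ is semisimple, and since $X=S/\Gamma$ is K\"ahler with discrete isotropy, Theorem \ref{BO} makes $\Gamma$ algebraic, hence finite; but then $S/\Gamma$ is a finite quotient of an affine group and carries many holomorphic functions, contradicting (c). Thus I may assume $R$ has positive dimension, so $G$ is mixed and Lemma \ref{radicalclosed} applies: the radical orbits are closed in $X$. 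In particular the orbit of the base point, $R.\Gamma/\Gamma\cong R/(R\cap\Gamma)$, is a closed complex submanifold of the K\"ahler manifold $X$, hence itself K\"ahler; being a solvmanifold with $\mathscr{O}=\mathbb{C}$ by hypothesis, Theorem \ref{OeRi88} identifies it as a Cousin group $C$, which is in particular abelian and contains its maximal compact real torus.

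Next I would form the holomorphic fibration $X=G/\Gamma\to Y:=G/R.\Gamma$. Since $R$ is normal and $R.\Gamma$ is closed, $Y=S/\Gamma_S$ where $\Gamma_S:=\Gamma R/R$ is a countable closed, hence discrete, subgroup of the semisimple group $G/R\cong S$, with fiber the Cousin group $C$. The conditions descend to $Y$: the bundle projection is surjective, so $\mathscr{O}(Y)\hookrightarrow\mathscr{O}(X)=\mathbb{C}$ gives $\mathscr{O}(Y)=\mathbb{C}$; and because every parabolic subgroup of $G$ contains $R$ and is the preimage of a parabolic of $S$, the fact that $\Gamma$ lies in no proper parabolic of $G$ forces $\Gamma_S$ to lie in no proper parabolic of $S$. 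The decisive point is then to show that $Y$ is again K\"ahler, equivalently that the extension $S\ltimes R$ may be replaced by the direct product $S\times R$, placing us exactly in the hypotheses of Theorem \ref{Gi041}. Here I would use that the fiber $C$ is an abelian Cousin group whose large compact torus can be acted on by the connected group $S$ only through the discrete automorphisms of its lattice, hence trivially; this is precisely where $\mathscr{O}(R.\Gamma/\Gamma)=\mathbb{C}$ enters. Granting this, Theorem \ref{BO} applies to $Y=S/\Gamma_S$: $\Gamma_S$ is algebraic, and being also discrete it is finite, whence $Y$ is a finite quotient of the affine group $S$ and $\mathscr{O}(Y)\neq\mathbb{C}$ unless $S=\{e\}$, contradicting $\mathscr{O}(Y)=\mathbb{C}$. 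Therefore $S=\{e\}$ and $G$ is solvable.

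The main obstacle is the decisive point above: transferring the K\"ahler property, or equivalently the splitting into a direct product, from $X$ down to the semisimple base $Y=S/\Gamma_S$. This is the only step that genuinely uses $\mathscr{O}(R.\Gamma/\Gamma)=\mathbb{C}$, and it is what turns the general semidirect, non-simply-connected situation into the direct-product setting already handled in Theorem \ref{Gi041}; everything else is bookkeeping with the radical fibration and the descent of the three conditions.
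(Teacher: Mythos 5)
Your overall scaffolding is sound and, for what it is worth, matches how this thesis actually uses the result: the thesis itself quotes the theorem from \cite{Gi04} without reproducing a proof, and your reduction steps are the standard ones. Passing to the universal cover, disposing of the purely semisimple case via Theorem \ref{BO} (finite isotropy, hence a Stein quotient, contradicting (c)), invoking Lemma \ref{radicalclosed} to close the radical orbits, identifying the closed orbit $R.\Gamma/\Gamma$ as a K\"ahler solvmanifold with $\mathscr{O}=\mathbb C$ and hence a Cousin group by Theorem \ref{OeRi88} (so $R$ is Abelian), and checking that conditions (a)--(c) descend along $G/\Gamma \to G/R.\Gamma = S/\Gamma_S$ --- all of this is correct bookkeeping, and your final contradiction (algebraic, hence finite, $\Gamma_S$ forces $\mathscr{O}(S/\Gamma_S)\neq\mathbb C$ unless $S=\{e\}$) is fine.

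The genuine gap is exactly the step you flag and then wave through: transferring the K\"ahler property to $Y=S/\Gamma_S$, equivalently trivializing the $S$-action on $R$ so as to land in the direct-product setting of Theorem \ref{Gi041}. This is not a technicality --- it is the entire content of the theorem beyond Theorem \ref{Gi041} --- and the heuristic you offer for it does not work as stated. The semisimple factor $S$ has no natural action on the fiber $C=R/(R\cap\Gamma)$: the fibration $G/\Gamma\to G/R.\Gamma$ is $G$-equivariant, but the stabilizer of a fiber is $R.\Gamma$, not $S$, and the adjoint action of $S$ on $R$ does \emph{not} preserve the lattice $\Lambda:=\Gamma\cap R$ --- only conjugation by elements of $\Gamma$ does. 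So the principle ``a connected group acting through automorphisms of a lattice acts trivially'' has no group to apply to. Even the natural repair --- note that $R$ Abelian makes ${\rm Ad}(\gamma)|_{\mathfrak r}$ depend only on the $S$-part of $\gamma$, that ${\rm Ad}(\Gamma)|_{\mathfrak r}$ preserves $\Lambda$, that the Cousin condition gives $\langle\Lambda\rangle_{\mathbb C}=\mathfrak r$ so the lattice-preserving complex-linear maps form a discrete subgroup of $GL(\mathfrak r)$, and that the image of $\Gamma$ in $S$ is Zariski dense (itself requiring a Lemma \ref{lem8}/Theorem \ref{AG942}-type argument from (b) and the K\"ahler hypothesis) --- does not finish by a naive Zariski-closure computation: the closure of $\Lambda$ in $\mathfrak r$ is its complex span, i.e.\ all of $\mathfrak r$, so one gets no constraint on $\rho(S)$ this way. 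Concluding that the representation of $S$ on $\mathfrak r$ is trivial requires a substantive additional argument (in \cite{Gi04} this is where the K\"ahler hypothesis is used in earnest, via tools in the spirit of \cite{BO88}), and without it your proposal is a correct reduction scheme wrapped around an unproven core.
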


If the $R$ orbits possess non-constant holomorphic functions,    
then the answer to Akhiezer's question in the general K\"{a}hler setting is not known.    
However, enough is known in certain cases that we are able to use these results   
to prove the classification in our main theorem.    
The remaining special case that we need is when $G = S \ltimes R$ is a semidirect 
product and the radical $R$ has complex dimension two.  
Then one has the following theorem which is extracted from the proof of Lemma 8 in \cite{AG94}.

\begin{lemma}  \label{lem8}   
Let $G$ be a complex Lie group with Levi-Malcev decomposition $G=S\ltimes R$ with 
$\dim_{\mathbb C} R=2$. 
Let $\Gamma$ be a discrete subgroup of $G$ such that $X=G/\Gamma$ is K\"{a}hler.    
Then $\Gamma$ is contained in a proper subgroup of $G$    
of the form $A \ltimes R$, where $A$ is a proper algebraic subgroup of $S$.   
\end{lemma}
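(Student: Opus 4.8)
The plan is to reduce the statement to a property of the image of $\Gamma$ under the quotient homomorphism $p\colon G=S\ltimes R\to G/R=S$. Since $p^{-1}(A)=A\ltimes R$ for any subgroup $A\le S$, it suffices to exhibit a proper algebraic subgroup $A\subsetneq S$ with $p(\Gamma)\subseteq A$; the natural candidate is the Zariski closure $A:=\overline{p(\Gamma)}$ taken inside the linear algebraic group $S$, which is automatically algebraic. Thus the lemma becomes the assertion that $p(\Gamma)$ is not Zariski dense in $S$, and I would organize the argument around the $S$-module structure of $\mathfrak{r}\cong\mathbb{C}^2$.

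First I would pin down the representation $\rho\colon S\to GL(\mathfrak{r})\cong GL_2(\mathbb{C})$. Because a semisimple group has no nontrivial characters, every one-dimensional subquotient is trivial, so $\rho$ is either trivial or the irreducible standard representation of an $SL_2$-factor of $S$ (the remaining factors acting trivially); in the latter case invariance of the commutator ideal $[\mathfrak{r},\mathfrak{r}]$ would produce a nontrivial invariant line, which is impossible, so $\mathfrak{r}$ is abelian and $R\cong\mathbb{C}^2$. If $\rho$ is trivial then $S$ centralizes $R$, hence $S\unlhd G$ and the $S$-orbit $S\Gamma/\Gamma\cong S/(S\cap\Gamma)$ is a locally closed complex submanifold of the K\"ahler manifold $X$, so it is K\"ahler; Theorem \ref{BO} forces $S\cap\Gamma$ to be algebraic, hence finite, and Lemma \ref{lem6} then gives $\Gamma\subseteq A\times R$ with $A^0$ solvable, whence $A\ne S$ and the lemma holds outright.

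In the standard-representation case I would argue by contradiction, assuming $p(\Gamma)$ Zariski dense. The key observation is that $\Gamma$ normalizes the discrete group $\Lambda:=\Gamma\cap R\subset\mathbb{C}^2$, so $\rho(p(\Gamma))\subseteq SL_2(\mathbb{C})$ acts $\mathbb{C}$-linearly and preserves $\Lambda$, hence its complex and real spans. A subspace invariant under a Zariski-dense subgroup is invariant under the closure $SL_2(\mathbb{C})$, which acts irreducibly over $\mathbb{C}$ and (since it is transitive on nonzero vectors) also over $\mathbb{R}$; therefore $\Lambda$ is either trivial or a full lattice. When $\Lambda$ is a full lattice, $R\Gamma=p^{-1}(p(\Gamma))$ is closed and $X\to S/p(\Gamma)$ is a holomorphic bundle with compact fiber the torus $\mathbb{C}^2/\Lambda$; fiber integration of the K\"ahler form then makes the base $S/p(\Gamma)$ K\"ahler, and Theorem \ref{BO} forces $p(\Gamma)$ to be finite, contradicting density.

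The hard part is the residual configuration $\Lambda=\{e\}$, in which $\Gamma$ meets $R$ trivially and is therefore the graph of a $1$-cocycle $c\in Z^1(p(\Gamma),\mathbb{C}^2)$ over a discrete, Zariski-dense $p(\Gamma)$; this is exactly the situation where the radical orbit $R\Gamma/\Gamma\cong\mathbb{C}^2$ carries nonconstant holomorphic functions, so the fiber-integration trick and the Gilligan-type results \ref{Gi041} and \ref{Gi04} do not apply, and the $S$-orbit is all of $S$ (already K\"ahler, giving no information). I expect this to be the genuine obstacle. The plan there is to show that the K\"ahler hypothesis forces $c$ to be a coboundary: when $c$ is a coboundary one conjugates $\Gamma$ by a translation $(e,w)$ into $S$, so that the orbit $S\Gamma/\Gamma\cong S/p(\Gamma)$ becomes a K\"ahler submanifold and Theorem \ref{BO} again makes $p(\Gamma)$ finite, contradicting density. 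Ruling out a nontrivial cohomology class (for instance by a Hodge-theoretic constraint coming from $X$ being K\"ahler, or by showing the relevant orbit must be closed as in Lemma \ref{radicalclosed}) is where the essential use of the K\"ahler condition enters, and completing that step finishes the proof.
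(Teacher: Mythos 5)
Your framing (reduce to non--density of $p(\Gamma)$ in $S$, split into trivial versus standard representation, note $R$ is Abelian in the nontrivial case) coincides with the paper's, and your trivial-action case is exactly the paper's argument via Theorem \ref{BO} and Lemma \ref{lem6}. But the nontrivial case contains a genuine error followed by an admitted gap. The error is the trichotomy: you claim Zariski density of $p(\Gamma)$ forces $\Lambda:=\Gamma\cap R$ to be trivial or a full lattice, because the real span of $\Lambda$ would be invariant under the Zariski closure $SL_2(\mathbb{C})$. The stabilizer of a \emph{real} subspace of $\mathbb{C}^2$ is only a real algebraic subgroup, and density in the complex Zariski topology says nothing about it: $SL_2(\mathbb{Z})$ is Zariski dense in $SL_2(\mathbb{C})$ yet preserves $\mathbb{Z}^2$, whose real span $\mathbb{R}^2$ is a proper real subspace that is certainly not $SL_2(\mathbb{C})$-invariant. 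Only your complex-span argument is valid, so ranks $1$, $2$, $3$ for $\Lambda$ are not excluded by your reasoning, and your case analysis is incomplete even before the case you flag. And that flagged case, $\Lambda=\{e\}$ with $\Gamma$ the graph of a cocycle, is left as an unproven hope that the K\"{a}hler hypothesis forces the cocycle to be a coboundary --- which is precisely the essential point, not a routine detail.

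The paper avoids stratifying by $\Gamma\cap R$ altogether, and this is the idea you are missing. Assuming $\pi(\Gamma)$ Zariski dense in $S$, Tits' theorem \cite{Tit72} gives a Zariski-dense free subgroup on two generators inside $\pi(\Gamma)$; lifting the generators into $\Gamma$ and passing to a torsion-free subgroup $\widetilde{\Gamma}_1$ of finite index (Corollary 6.13 in \cite{Rag08}), the K\"{a}hler hypothesis enters exactly once: by \cite{BO88} (Theorem \ref{BO}) each $gSg^{-1}$-orbit in the K\"{a}hler manifold $G/\widetilde{\Gamma}_1$ has finite isotropy, which by torsion-freeness is trivial, so every nontrivial element of $\widetilde{\Gamma}_1$ avoids $\bigcup_{g\in G} gSg^{-1}$. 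Lemma 7 of \cite{AG94} supplies a proper Zariski-closed set $Z\subset S$ with $\pi^{-1}(S\setminus Z)\subset\bigcup_{g\in G} gSg^{-1}$, whence $\pi(\widetilde{\Gamma}_1\setminus\{e\})\subset Z$, contradicting Zariski density. This single argument disposes simultaneously of all your subcases, including the cocycle configuration; your full-lattice subcase (fiber integration and Theorem \ref{BO}) is sound as far as it goes, but one cannot reach it through the false trichotomy, and without an argument killing the nontrivial cohomology class your proof does not close.
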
   
  
\begin{proof}  
Without loss of generality we may assume that $G$ is simply connected.  
Suppose that the adjoint action of $S$ on $R$ is trivial.  
Then $G=S \times R$ and $S \cap \Gamma$ is finite by Theorem \ref{BO}.   
The result follows by Lemma \ref{lem6}, i.e., $\Gamma \subseteq A \ltimes R$, 
where $A$ is a proper algebraic subgroup of $S$.

\medskip 
Assume now that the adjoint action of $S$ on $R$ is not trivial.   
Then we claim that $R$ is Abelian.   
If not, then $R'$ is one--dimensional and $S$--invariant.   
Hence the $S$--action on $R'$ is trivial and since $S$ is semisimple, 
this action is completely reducible.   
So there is a one dimensional subspace of $R$ complementary to $R'$ 
that is also $S$--invariant.   
But this action is also trivial which gives a contradiction to the $S$--action on $R$ 
being non--trivial.   
So $R$ is Abelian and the $S$--action on $R$ is irreducible.  

\medskip       
We next note that $G$ has the structure of a linear algebraic group.  
We see this in the following way.   
Consider a semisimple complex Lie group $S$ with an irreducible two dimensional representation 
$R: S \to GL(2,\mathbb C)$.  
Since $S$ is perfect, its image is too and thus this image must be isomorphic to $SL(2,\mathbb C)$ which is 3 dimensional.  
By the rank--nullity theorem the kernel of $R$ has codimension 3 and is a normal subgroup of $S$.   
Thus $S$ decomposes as a  
direct product $S = S_1 \cdot S_2$, where $S_2$ acts trivial and $S_1 \simeq SL(2,\mathbb C)$ 
with the usual action on $\mathbb C^2$.   
Any element $s\in S$ has the form $s_1 \cdot s_2$ with $s_1$ having different eigenvalues 
and $s_2$ acting trivially on $R$. 
So $G = S_2\times (S_1 \ltimes \mathbb C^2)$.   
Since $S_2$ and $S_1 \ltimes \mathbb C^2$ are both linear algebraic, it follows that $G$ has a linear algebraic structure.   

\medskip  
We claim that $\Gamma$ is contained in a proper algebraic subgroup of $G$.    
We assume not, i.e., that $\Gamma$ is Zariski dense in $G$ and derive a contradiction.   
Then $\pi(\Gamma)$ is Zariski dense in $S$, where $\pi:G \to S$ denotes the projection. 
It follows that $\pi(\Gamma)$ contains a free group with two generators that is also Zariski dense in $S$ \cite{Tit72}.  
We pick $\pi$--preimages of two of these generators in $\Gamma$ and let $\widetilde{\Gamma}$ be the 
subgroup of $\Gamma$ generated by these elements.  
There is a torsion free subgroup $\widetilde{\Gamma}_1$ of finite index in 
$\widetilde{\Gamma}$ (Corollary 6.13, page 95, \cite{Rag08}).  
Then $\pi(\widetilde{\Gamma}_1)$ is also Zariski dense in $S$.  
For any $g\in G$ the intersection $\widetilde{\Gamma}_1 \cap gSg^{-1}$ is finite, since the $gSg^{-1}$--orbit 
in the K\"{a}hler manifold $G/\widetilde{\Gamma}_1$ has finite isotropy \cite{BO88}.   
Since $\widetilde{\Gamma}_1$ is torsion free, we have $\widetilde{\Gamma}_1 \cap gSg^{-1} = \{ e \} $.   
Thus 
\[  
        \widetilde{\Gamma}_1 \; \setminus \; \{ e \} \; \subset \; G \; \setminus \; \bigcup_{g\in G} gSg^{-1} 
        \; \subset \; G \; \setminus \; \pi^{-1}(S \; \setminus \; Z) \; = \; \pi^{-1}(Z) , 
\]   
where $Z$ is a proper Zariski closed subset of $S$ given by Lemma 7 in \cite{AG94}.  
Then $\pi(\widetilde{\Gamma}_1) \subset Z$, contradicting the fact that $\pi(\widetilde{\Gamma}_1)$ is 
Zariski dense in $S$. 
Hence $\Gamma$ is not Zariski dense in $G$, i.e.,
$A:= \overline{\pi(\Gamma)}$ is a proper algebraic subgroup of $S$, so $\Gamma \subset A\ltimes R$ as desired.
\end{proof}

This has the following consequence which we use later.  

\begin{theorem}   \label{AG942} 
Suppose $G$ is a connected complex Lie group with Levi-Malcev decomposition $G=S\ltimes R$ with 
$\dim_{\mathbb C} R=2$. 
Let $\Gamma$ be a discrete subgroup of $G$ such that $X=G/\Gamma$ is K\"{a}hler,   
$\Gamma$ is not contained in a proper parabolic subgroup of $G$ and $\mathscr{O}(G/\Gamma) 
\simeq \mathbb C$.       
Then  $S=\{e\}$, i.e., $G$ is solvable.   
\end{theorem}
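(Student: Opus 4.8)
The plan is to use Lemma \ref{lem8} as the single structural input and then play conditions (b) and (c) against each other. Assume for contradiction that $S \neq \{e\}$. Since $\dim_{\mathbb C} R = 2$, Lemma \ref{lem8} applies and produces a proper algebraic subgroup $A \subsetneq S$ with $\Gamma \subseteq A \ltimes R =: Q$. Because $R$ is the closed normal radical of $G$ and $Q$ is precisely the preimage of the closed subgroup $A$ under the projection $\pi \colon G \to G/R = S$, the subgroup $Q$ is closed and the homogeneous fibration
\[ G/\Gamma \longrightarrow G/Q \cong S/A \]
is well defined, holomorphic, and surjective. Note that $\dim S/A > 0$, since $A$ is a proper subgroup of the connected group $S$. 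The strategy is to show that each of the two possible behaviors of $A$ relative to the parabolic subgroups of $S$ contradicts one of the standing hypotheses.

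First I would treat the case in which $A$ is contained in a proper parabolic subgroup $P$ of $S$. The parabolic subgroups of $G$ are exactly the $\pi$-preimages of parabolic subgroups of $S$, so $Q = A \ltimes R \subseteq P \ltimes R = \pi^{-1}(P)$, a proper parabolic subgroup of $G$. Then $\Gamma \subseteq Q$ lies in a proper parabolic subgroup of $G$, directly contradicting hypothesis (b). Hence $A$ is contained in no proper parabolic subgroup of $S$.

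The remaining case is the heart of the argument. By the Borel--Tits theorem, a closed subgroup of the semisimple (hence algebraic) group $S$ that is contained in no proper parabolic subgroup must have trivial unipotent radical, i.e. $A$ is reductive. The Matsushima--Onishchik criterion (the theorem recorded in the subsection on reductive $G$) then shows that $S/A$ is a Stein manifold; being of positive dimension and holomorphically separable, it carries a non-constant holomorphic function $f$. Pulling $f$ back along the surjection $G/\Gamma \to S/A$ yields a non-constant holomorphic function on $G/\Gamma$, contradicting hypothesis (c) that $\mathscr{O}(G/\Gamma) \cong \mathbb C$. Since both cases are impossible, the assumption $S \neq \{e\}$ fails and $G$ is solvable.

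I expect the main obstacle to be this parabolic dichotomy: one must pin down the correct notion of parabolic subgroup for the non-reductive group $G$ (the preimages of parabolics of $S = G/R$) so that the reduction from (b) is clean, and one must invoke Borel--Tits to convert ``avoids all proper parabolics'' into ``reductive,'' which is exactly what makes the Stein criterion applicable. A secondary point to verify is that $A$ is genuinely proper with $\dim S/A > 0$, which is precisely where the hypothesis $S \neq \{e\}$ enters and guarantees the pulled-back function is non-constant. As a sanity check, in the degenerate situation where $S$ acts trivially on $R$ one could instead appeal directly to Theorem \ref{Gi041} after passing to the universal cover, and the uniform argument above should reproduce that conclusion.
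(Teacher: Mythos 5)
Your proof is correct and follows essentially the same route as the paper: Lemma \ref{lem8} places $\Gamma$ in $A \ltimes R$ with $A$ a proper algebraic subgroup of $S$, and then the same dichotomy is run --- if $A$ lies in a proper parabolic of $S$ this lifts to a proper parabolic of $G$ containing $\Gamma$, contradicting (b), while otherwise the Borel--Tits theorem (Theorem in \S 30.4 of \cite{Hum75}) makes $A$ reductive, so $S/A$ is Stein by the Matsushima--Onishchik criterion and pulling back a non-constant function contradicts $\mathscr{O}(X) \simeq \mathbb C$. The only (harmless, indeed slightly cleaner) deviation is that you fiber directly over $S/A$ via the closed subgroup $A \ltimes R = \pi^{-1}(A)$, which lets you bypass the paper's appeal to Lemma \ref{radicalclosed} and its intermediate fibration through $G/R.\Gamma$, and your parabolic-containment dichotomy is stated more sharply than the paper's ``reductive / not contained in a proper reductive subgroup'' phrasing.
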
  

\begin{proof}  
The radical orbits are closed by lemma \ref{radicalclosed}.   
By lemma \ref{lem8} the subgroup $\Gamma$ is contained in a proper subgroup of $G$ of the form 
$A \ltimes R$, where $A$ is an algebraic subgroup of $S$.  
Thus there are fibrations    
\[  
         G/\Gamma \; \longrightarrow \;  G/R.\Gamma  \; \longrightarrow \; S/A ,  
\]  
where the base $G/R.\Gamma = S/\Lambda$ with $\Lambda := S \cap R.\Gamma$.  
If $A$ is reductive, then $S/A$ is Stein and we get non-constant holomorphic functions 
on $X$ as pullbacks using the above fibrations.    
But this contradicts the assumption that $\mathscr{O}(X) \simeq \mathbb C$.  
If $A$ is not contained in a proper reductive subgroup, then Theorem in \S 30.4 in \cite{Hum75} applies and     
$A$ is contained in a proper parabolic subgroup of $S$    
and so is $\Gamma$, thus contradicting the assumption that this is not the case.  
\end{proof}  

We finish this chapter with a very well known theorem of Borel and Remmert 
concerning compact complex K\"{a}hler manifold.
We will refer to this theorem through this thesis.

\begin{theorem}\cite{BR62}\label{torus.flag}
Let $X$ be a connected compact complex K\"{a}hler manifold such that 
the group of all holomorphic transformations of $X$ is transitive.
Then $X$ is the product of a complex torus and a flag manifold.
\end{theorem}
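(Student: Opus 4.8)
The plan is to realize $X$ as a homogeneous space of its own automorphism group and then produce the two advertised factors by combining the normalizer (Tits) fibration with the Albanese map. First I would set $G := \mathrm{Aut}(X)^0$, the identity component of the group of holomorphic transformations of $X$. Since $X$ is compact, $G$ is a connected complex Lie group, and by hypothesis it acts transitively, so $X = G/H$ for a closed complex subgroup $H$. I would fix the Levi--Malcev decomposition $G = S \ltimes R$ of Theorem \ref{Levimalcev}, with $S$ semisimple and $R$ the radical, to keep track of which part of $G$ will produce the flag factor and which the torus factor.

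Next I would form the normalizer fibration $G/H \to G/N$, where $N := N_G(H^0)$. Since $H^0$ is normal in $N$, the quotient $N/H^0$ is a complex Lie group, so the fiber $N/H = (N/H^0)/(H/H^0)$ is the quotient of a complex Lie group by the discrete subgroup $H/H^0$; hence each fiber is a compact complex parallelizable manifold in the sense of Lemma \ref{Wang}. Each fiber is a closed complex submanifold of the compact K\"ahler manifold $X$, so it inherits a K\"ahler structure. A compact complex parallelizable manifold that is K\"ahler must be a complex torus (Wang), so the fiber $N/H =: T$ is a complex torus.

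I would then identify the base $G/N$ as a flag manifold. The normalizer of the connected subgroup $H^0$ is the stabilizer of a point in a Grassmannian of $\mathfrak{g}$, so $G/N$ embeds $G$-equivariantly into a projective space and is therefore a compact projective rational homogeneous manifold; concretely it is acted on transitively by the semisimple part $S$ with parabolic isotropy $P$, giving $G/N = S/P =: F$, a flag manifold. At this stage $X$ is exhibited as a holomorphic torus bundle $T \to X \to F$ over a flag manifold.

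For the product decomposition I would bring in the Albanese map $\alpha : X \to \mathrm{Alb}(X)$, which exists with complex-torus target because $X$ is compact K\"ahler and is $G$-equivariant; homogeneity forces it to be a surjective submersion onto $\mathrm{Alb}(X)$, whose fibers are compact homogeneous K\"ahler manifolds of vanishing first Betti number, hence again flag manifolds. I would assemble the two fibrations into a single $G$-equivariant holomorphic map $\Phi : X \to \mathrm{Alb}(X) \times F$, $\Phi(x) = (\alpha(x), p(x))$, where $p : X \to G/N = F$ is the normalizer projection; a dimension count gives $\dim \mathrm{Alb}(X) + \dim F = \dim T + \dim F = \dim X$, so it suffices to check that $\Phi$ is an immersion, i.e., that $\ker d\alpha$ (tangent to the Albanese flag fiber) and $\ker dp$ (tangent to the torus fiber $T$) meet only in zero. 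The main obstacle is exactly this transversality: it amounts to showing that the torus fiber $T$ injects into $\mathrm{Alb}(X)$, equivalently that the holomorphic $1$-forms of $X$ restrict nontrivially to $T$, and this is precisely where the K\"ahler (Hodge-theoretic) hypothesis is indispensable. Granting it, $\Phi$ is an equivariant biholomorphism and $X \cong T \times F$ is the desired product of a complex torus and a flag manifold.
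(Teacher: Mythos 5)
The paper offers no proof of this statement at all: it is quoted from Borel--Remmert \cite{BR62} as a known classical result, so your proposal can only be measured against the standard argument, whose overall architecture you do reproduce correctly (normalizer fibration with torus fiber via Lemma \ref{torus} and flag base via Lemma \ref{flag}, then the Albanese map). The genuine gap is that the two steps you treat as separate --- the dimension count $\dim \mathrm{Alb}(X)=\dim T$ and the injectivity of $d\Phi$ along the torus fibers --- are one and the same unproven assertion, namely $b_1(X)=2\dim_{\mathbb C} T$, equivalently that the restriction map $H^0(X,\Omega^1_X)\to H^0(T,\Omega^1_T)$ is surjective. A priori the homotopy sequence $\pi_2(F)\to\pi_1(T)\to\pi_1(X)\to\pi_1(F)=0$ of the bundle $T\to X\to F$ gives only $b_1(X)\le 2\dim_{\mathbb C} T$, and the inequality can be strict for non-K\"ahler bundles: the Hopf surface is an elliptic-curve bundle over $\mathbb P_1$ with $b_1=1$. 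So presenting the dimension count as automatic and locating the entire difficulty in transversality misplaces the crux, and the phrase ``granting it'' concedes exactly the analytic content of the theorem. To close the gap you must actually prove the restriction statement, e.g.\ via Blanchard's results \cite{Bla56} on K\"ahler fiber spaces (holomorphic $1$-forms of the torus fiber extend when the base is simply connected, by degeneration of the Leray sequence), or via the original Borel--Remmert route: the Tits fibration is a principal $T$-bundle, $H^1(F,\mathscr O_F)=0$ for the flag manifold $F$, so the bundle is determined by its topological Chern class in $H^2(F,\Lambda)$, and the K\"ahler hypothesis forces that class to vanish, giving holomorphic triviality directly and bypassing $\Phi$ altogether.

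Two smaller repairs. First, even with $d\Phi$ injective everywhere you obtain only an unramified covering $X\to \mathrm{Alb}(X)\times F$ of compact manifolds, not yet a biholomorphism; you should either compare fundamental groups or note that any finite covering of a torus times a (simply connected) flag manifold is again of that form, which suffices for the statement. Second, your remark that the Albanese fibers are flag manifolds ``because they are compact homogeneous K\"ahler with vanishing first Betti number'' silently invokes the other half of Borel--Remmert's theorem and is circular as written; fortunately it is dispensable, since the flag factor you actually use is the base $G/N$ of the normalizer fibration. Along the same lines, the connectedness of the fiber $N/H$ should be recorded before applying Lemma \ref{torus}, which is stated for connected manifolds.
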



\chapter{Basic Tools}\label{tools}

\section*{Fibration Methods}

One of our standard tools throughout is the use of fibrations, in particular, 
those that arise as homogeneous fibrations, i.e., 
\[   
      G/H \; \xrightarrow{J/H} \; G/J .      
\] 
where $G$ is a complex Lie group, and $J$ and $H$ are closed complex
subgroups with $H$ contained in $J$.  
Let $H^0$ be the identity component of $H$, and $\widetilde{H}$ be the largest subgroup of $H$ normal in $J$. 
The group of the bundle is $J/\widetilde{H}$ acting in $J/H$ as left translations (see the Theorem on page 30, \cite{Ste57}).    
If this action is trivial the bundle splits as the trivial product $G/J\times J/H$.
 
Here we give a brief explanation of some fibrations that we will use in later chapters. 
For further discussion we refer the reader to the references \cite{HO84}, \cite{Hu90} and \cite{HO81}.


\section{Normalizer fibration}  \label{normalizerfibration}

The normalizer of a subgroup of a group has been a useful tool in group theory for a long time.  
Tits was the first who analyzed the structure of compact complex homogeneous manifolds 
using the normalizer subgroup \cite{Tit62}.  
Given a complex homogeneous manifold $X=G/H$, where $G$ is a (connected) complex Lie 
group and $H$ is a closed complex subgroup, let $N := N_G(H^{0})$ be the normalizer in $G$ 
of the identity component $H^{0}$ of $H$. 
Then $H\subset N$ and $N$ is a closed complex subgroup of $G$.    
One then has the fibration $G/H \to G/N$.  
This fibration is useful because its base $G/N$ is an orbit in some projective space.
To see this consider the adjoint representation ${\rm Ad} : G\to GL(\mathfrak{g})$, 
where $\mathfrak{g}$ is the Lie algebra of $G$. 
Let $\mathfrak{h}$ be the Lie algebra of $H$.
Since $\mathfrak{h}$ is a vector subspace of dimension $k$ in the vector space 
$\mathfrak{g}$ of dimension $n$, we may consider $\mathfrak{h}$ as a point in Grassmann manifold ${\rm Gr}(k,n)$, 
and $N$ as its isotropy group.
As a conclusion we can always consider the base of a normalizer fibration $G/N$ as an orbit in some Grassmann manifold.
By Pl\"{u}cker embedding theorem this Grassmann manifold can be equivariantly embedded 
in a complex projective space $\mathbb P_m$.

Since $H^0$ is normal in $N$, it acts trivially on the fiber $F = N/H$, 
and we can write the fiber as $F=\hat{N}/\Gamma$,
where $\Gamma :=H/H^0$ is a discrete subgroup of 
the complex Lie group $\hat{N} :=N/H^0$.
Thus, the fiber of the normalizer fibration is always complex parallelizable.   
If the fiber of the normalizer fibration
is also compact and K\"{a}hler, then the lemma below shows that it is a torus.

\begin{lemma} [Corollary 2 in \cite{Wan54}]  \label{torus}
Let $X$ be a compact complex parallelizable K\"{a}hler manifold. 
Then $X$ is a torus, i.e. $X = \mathbb C^n/\Gamma$, where $\Gamma$ is a co-compact, discrete additive subgroup.
\end{lemma}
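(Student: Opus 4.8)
The plan is to reduce to the underlying complex Lie group structure and then force it to be abelian. First I would invoke Lemma \ref{Wang}: since $X$ is compact and complex parallelizable, I may write $X = G/\Gamma$ with $G$ a connected complex Lie group and $\Gamma$ a discrete subgroup. Let $\mathfrak g$ be the Lie algebra of $G$, with structure constants $c_{ij}^k$ relative to a basis $v_1,\dots,v_n$. The fundamental vector fields $\xi_{v_1},\dots,\xi_{v_n}$ generating the $G$--action descend to a global holomorphic frame on $X$, and I would pass to the dual coframe $\omega_1,\dots,\omega_n$ of globally defined holomorphic $1$--forms. Since each $\omega_k(\xi_j)=\delta_{kj}$ is constant, the invariant formula $d\omega_k(\xi_i,\xi_j)=-\omega_k([\xi_i,\xi_j])$ yields the Maurer--Cartan structure equations
\[
   d\omega_k \;=\; -\tfrac{1}{2}\sum_{i,j} c_{ij}^k\,\omega_i\wedge\omega_j .
\]

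The heart of the argument is to show that each $\omega_k$ is closed, and this is exactly where compactness together with the K\"ahler hypothesis enters. Here I would appeal to Hodge theory on a compact K\"ahler manifold: every holomorphic $p$--form is harmonic, hence $d$--closed. Concretely, a holomorphic form is of type $(p,0)$ with $\bar\partial\omega = 0$; since $\bar\partial^{*}$ lowers antiholomorphic degree it annihilates $\omega$ automatically, so $\Delta_{\bar\partial}\omega = 0$, and the K\"ahler identity $\Delta = 2\Delta_{\bar\partial}$ makes $\omega$ harmonic and therefore closed. Applying this to each of the holomorphic $1$--forms $\omega_k$ gives $d\omega_k = 0$.

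Because the $\omega_i$ form a coframe at every point, the $2$--forms $\omega_i\wedge\omega_j$ with $i<j$ are pointwise linearly independent, so $d\omega_k = 0$ forces $c_{ij}^k = 0$ for all $i,j,k$. Thus $\mathfrak g$ is abelian, and since $G$ is connected, $G$ is an abelian complex Lie group. Then $\exp:\mathfrak g \cong \mathbb C^n \to G$ is a surjective homomorphism with discrete kernel, so $X = G/\Gamma = \mathbb C^n/\widetilde\Gamma$ for a discrete additive subgroup $\widetilde\Gamma$ (the preimage of $\Gamma$); compactness of $X$ forces $\widetilde\Gamma$ to be a full cocompact lattice, whence $X$ is a complex torus.

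I expect the main obstacle to be precisely the closedness of the invariant holomorphic $1$--forms, as this is the only genuinely analytic input and is where both hypotheses are indispensable: the complex Heisenberg nilmanifold is compact and parallelizable but not K\"ahler and not a torus, while $\mathbb C^n$ itself is K\"ahler and parallelizable but not compact and not a torus. Once the forms are known to be closed, the remainder is the Maurer--Cartan formalism and elementary linear algebra.
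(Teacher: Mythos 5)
Your proposal is correct and follows essentially the same route as the paper's proof: reduce to $X = G/\Gamma$ via Lemma \ref{Wang}, use the fact that holomorphic forms on a compact K\"ahler manifold are closed, and read off from the structure equations (equivalently, the invariant formula $d\omega_i(X_j,X_k) = -\omega_i([X_j,X_k])$, which is the paper's version of your Maurer--Cartan computation) that all structure constants vanish, so $\mathfrak g$ is abelian and $X$ is a torus. The only differences are expository: you spell out the Hodge-theoretic reason holomorphic forms are closed and the final lattice argument, both of which the paper cites or states briefly.
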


\begin{proof}  
By Lemma \ref{Wang} we have $X = G/\Gamma$. 
To prove the lemma it is enough to prove that the Lie algebra $\mathfrak g$ is Abelian and  
so $G \cong \mathbb C^n$.
Let $\omega_1, \dots, \omega_n$ be a basis of right-invariant holomorphic
1-forms on $G$.   
Because these forms are right G-invariant they are right $\Gamma$-invariant so we can consider them as forms on $X$.  
Since $X$ is compact and K\"{a}hler, holomorphic forms on $X$ are closed.   
Hence for the basis $X_1,\cdots,X_n$ of invariant vector fields on $X$ one has     
\[   
     0 \; = \; d\omega_i(X_j,X_k) \; = \; \frac{1}{2} \{ X_j (\omega_i(X_k)) - 
     X_k (\omega_i (X_j)) \}-\omega_i([X_j,X_k])= -\omega_i([X_j,X_k])
\]   
for all $i, j$, and $k$ (see page 36, \cite{KN63}). 
This implies  $[X_j,X_k]=0$ for all $j$ and $k$, and therefore the Lie algebra $\mathfrak g$ is Abelian.
\end{proof}

\medskip
The base of the normalizer fibration of a compact homogeneous manifold is a flag manifold. 
If a homogeneous complex manifold is not compact, it might happen that the base is compact.   
The next lemma is the Borel Fixed Point Theorem and is stated here for future reference.

\begin{lemma}\label{flag} 
If the base $G/N$ of the normalizer fibration is compact, 
then it is a flag manifold, i.e., homogeneous under the effective action of 
a semisimple group. 
\end{lemma}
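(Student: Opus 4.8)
The plan is to use the fact, established in the preceding discussion, that the base $G/N$ sits as a $G$--orbit inside some projective space $\mathbb{P}_m$ via the adjoint representation ${\rm Ad}\colon G \to GL(\mathfrak{g})$ followed by the Pl\"ucker embedding of the relevant Grassmannian. Writing $Y := G/N$ for this orbit and $y_0 \in Y$ for the base point corresponding to $\mathfrak{h}$, the hypothesis is that $Y$ is compact. First I would invoke Chow's theorem: a compact complex submanifold of $\mathbb{P}_m$ is a projective algebraic variety, so $Y$ is a complete variety which is moreover Zariski closed in $\mathbb{P}_m$.

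Next I would replace $G$ by an algebraic group having the same orbit. Since ${\rm Ad}$ has the center of $G$ as its kernel, the action on $\mathbb{P}_m$ factors through ${\rm Ad}(G)$, a complex linear group whose Zariski closure $\widehat{G} := \overline{{\rm Ad}(G)}$ is a linear algebraic group acting algebraically on $\mathbb{P}_m$. Because ${\rm Ad}(G)$ is Zariski dense in $\widehat{G}$ and the orbit $Y = {\rm Ad}(G)\cdot y_0$ is already Zariski closed by Chow, the orbit $\widehat{G}\cdot y_0$ is contained in the Zariski closure of $Y$, namely $Y$ itself; hence $\widehat{G}$ acts transitively on $Y$. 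This is the step that launders away the possible non-algebraicity of $G$.

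The heart of the argument is then the Borel Fixed Point Theorem. Let $\widehat{R}$ be the radical of $\widehat{G}$, a connected solvable algebraic group acting on the nonempty complete variety $Y$; it therefore has a fixed point $p \in Y$. Since $\widehat{R}$ is normal in $\widehat{G}$ and $\widehat{G}$ is transitive, for every $g$ the point $g\cdot p$ is fixed by $g\widehat{R}g^{-1} = \widehat{R}$; hence $\widehat{R}$ fixes all of $Y$ and acts trivially. Consequently $Y$ is homogeneous under the semisimple quotient $\widehat{S} := \widehat{G}/\widehat{R}$, and a projective homogeneous space of a semisimple group has parabolic isotropy, so $Y = \widehat{S}/P$ is a flag manifold. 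Passing to the quotient by the ineffective kernel of the $\widehat{S}$--action then exhibits $Y$ as homogeneous under the effective action of a semisimple group, as claimed.

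I expect the main obstacle to be precisely the passage from the complex Lie group $G$, which need not be algebraic, to an algebraic group $\widehat{G}$ acting with the same orbit: the delicate point is that enlarging ${\rm Ad}(G)$ to its Zariski closure must not enlarge the orbit $Y$, and this is guaranteed exactly by the compactness, hence Zariski-closedness via Chow, of $Y$. Once this is secured, Borel's theorem together with the normality of the radical finishes the proof almost formally.
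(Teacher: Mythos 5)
Your proof is correct and is exactly the argument the paper has in mind: the paper states this lemma without proof, remarking only that it rests on the Borel Fixed Point Theorem, and your writeup (Chow's theorem to make the compact orbit Zariski closed, passage to the Zariski closure of ${\rm Ad}(G)$ without enlarging the orbit, then Borel's fixed point theorem plus normality of the radical to show the radical acts trivially) is the standard fleshed-out version of that argument. All the steps check out, including the delicate point you flag about the Zariski closure preserving the orbit, which indeed follows from Zariski-continuity of the orbit map together with the closedness of $Y$.
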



\section{Commutator fibration}   \label{commutatorfibration}

The base of the normalizer fibration is an orbit in some projective space. 
In this setting $G$ is represented as a complex linear group with $\mathfrak g$ as its Lie algebra.   
Let $\overline{G}$ be the algebraic closure of $G$ and note that the  
corresponding $\overline{G}$--orbit $\overline{G}/\overline{H}$ contains   
the $G$--orbit $G/H$.   
By abuse of language we are using $\overline{H}$ to denote the isotropy of   
the $\overline{G}$ orbit and not the algebraic closure of $H$.  
Since $\overline{G}$ is an algebraic group acting algebraically on the projective space,   
its commutator subgroup $\overline{G}'$ is also algebraic and has closed orbits.    
This gives rise to the following diagram  
\[   
     \begin{array}{ccc}    G/H & \longrightarrow & \overline{G}/\overline{H}  \\  
     \downarrow & & \downarrow \\  
     G/I & \longrightarrow & \overline{G}/\overline{G}' . \overline{H}  \end{array}   
\]    
Let $G'$ be the commutator group of $G$ with $\mathfrak g '$ as its Lie algebra.  
By Chevalley's theorem (Theorem 13, page 173, \cite{Che51}), 
one has $\mathfrak{g}' = \overline{\mathfrak{g}}'$ and 
thus $G' = \overline{G}'$.  
Now we claim that $I = G'.H = G\cap\overline{G'}.\overline{H}$. 
Let $g=g_1 . g_2 \in G\cap \overline{G'}.\overline{H}$, where $g_1 \in \overline{G'}= G'$ and $g_2 \in \overline{H}$. 
But since $g_2\in G\cap \overline{H} = H$ we conclude $g = g_1 . g_2 \in G' . H$ and 
hence $G\cap \overline{G'}.\overline{H} \subseteq G'.H$.
The opposite inclusion is also true and we get the equality as desired. 
As a consequence, $G/I = G/G'.H$ is an orbit in the affine algebraic Abelian group $\overline{G}/\overline{G}' . \overline{H}$.  
Since the latter group is Stein, we see that $G/G'.H$ is also Stein and we have the commutator group fibration 
\[  
      G/H \; \longrightarrow \; G/G'. H  
\]  
If $G$ is also solvable, then the commutator group is unipotent. 
So the fiber of the above fibration is algebraically isomorphic to some affine space $\mathbb C^n$.


\section{Holomorphic reduction} \label{hreduction}

Let $X:=G/H$ be a homogeneous complex manifold, where $G$ is a (connected) complex Lie group,
and $H$ a closed complex subgroup.
Define an equivalence relation $\sim$ on $X$ by 
$$x\sim y\Longleftrightarrow f(x) = f(y)\;\;\;\forall f\in\mathscr{O}(X).$$
We have the natural map 
\[
\pi:X\to X/\sim=G/J
\]
where 
\[  
    J \; := \; \{ \ g \in G \ | \ f(gH) \ = \ f(eH) \quad\forall\quad f \in \mathscr{O}(G/H) \ \}  .  
\]   
Then $J$ is a closed complex subgroup of $G$ and contains $H$.  
The map $\pi$ is called the holomorphic reduction of $X$.   
Note that for an arbitrary complex manifolds (non-homogeneous), $X/\sim$ might not be locally compact and so 
might not have the structure of a complex space.
Also, the base of a holomorphic reduction need not to be Stein and 
one may not have $\mathscr{O}(J/H)\neq \mathbb C$.
\begin{example}
In $G:=SL(2,\mathbb C)$ we let 
\[  
       H \; := \;  \left\{  \left( \begin{array}{cc}  1 & k \\  0 & 1 \\ \end{array}   \right)  
         :\;\;  k\in\mathbb Z  \right\}
\]  

The algebraic closure of $H$ is  
\[  
         \overline{H} \; = \; \left\{  \left(  \begin{array}{cc} 1 & w \\ 0 & 1 \\  \end{array}   \right)  
            :\;\;  w\in \mathbb C   \right\}
\]   
We have the intermediate fibration
\[
       G/H \to G/\overline{H} \to G/J
\]
where $J$ is the subgroup of $G$ that appears in the definition of the holomorphic reduction of $G/\overline{H}$.
Since $G$ is reductive, the theorem of Barth and Otte (Theorem \ref{reduct}) applies and
every $H$-invariant holomorphic function on $G$ is necessarily $\overline{H}$-invariant.   
Note that $\overline{H}\subseteq J$.
Hence the holomorphic reduction of $G/\overline{H}$
and $G/H$ are the same. In this case $G/\overline{H} \cong \mathbb C^2 - \{(0,0)\}$ which is 
holomorphically separable, i.e., $J=\overline{H}$.

So we see that the base of the holomorphic reduction is not necessarily Stein. 
The fiber $J/H=\mathbb C^*$ is Stein and so not necessarily a Cousin group.
\end{example}

The holomorphic reduction $\pi$ has the following properties:
\begin{itemize}
\item The homogeneous complex manifold $G/J$ is holomorphically separable  
\item $\pi^{*} \mathscr{O}(G/J) \simeq \mathscr{O}(G/H)$, i.e.,    
all holomorphic functions on $G/H$ arise as the pullbacks  of holomorphic functions on $G/J$   
via the holomorphic map $\pi$. 
\end{itemize}


\section{Topological invariant $d_X$}  \label{dX}

By $d_X$ we mean the co-dimension of the top (singular) homology group of $X$
with coefficients in $\mathbb Z_2$ that does not vanish.

\begin{definition} 
Let $X$ be an oriented manifold with $\dim_\mathbb R X = n$.    
Define
\[
     d_X \; := \; \min \{ \ k \ : \ H_{n-k}(X,\mathbb Z_2) \ \neq \ 0 \ \} .        
\]
\end{definition}

Note that $d_X = 0$, exactly if the manifold $X$ is compact. 
Otherwise, $d_X$ is positive.
The integer $d_X$ is dual to an invariant introduced by H. Abels \cite{Abe76}.  
The following theorem is a good tool to calculate $d_X$, when $X$ has {\bf connected isotropy}.   
The reader should note that this does not apply in the setting of discrete isotropy and one needs   
the fibration lemma given in the next section in order to handle that setting.           

\begin{theorem}[Covariant fibration \cite{Mos55}]    \label{MOSTOW} 
Let $G$ be a connected real Lie group and $H$ a connected closed Lie subgroup.  
Further, suppose $K$ is a maximal compact subgroup of $G$ and $L$ is 
a maximal compact subgroup of $H$ contained in $K$.  
Then $G/H$ can be retracted by a strong retraction to the compact
space $K/L$, i.e., $G/H$ admits the structure of a real vector bundle over $K/L$. Thus 
$$
d_{G/H}=\dim(G/H)-\dim(K/L)
$$
\end{theorem}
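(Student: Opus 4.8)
The plan is to prove the stronger geometric statement — that $G/H$ is the total space of a real vector bundle over $K/L$ — and then read off the homological conclusion. First I would fix the base point $o = eH \in G/H$ and identify the $K$--orbit through $o$. Since $L \subseteq K \cap H$ and $K \cap H$ is a closed, hence compact, subgroup of $H$ containing the maximal compact subgroup $L$, maximality forces $K \cap H = L$; therefore $kL \mapsto k\cdot o$ is a well-defined embedding (well-defined because $L \subseteq H$, injective because $k^{-1}k' \in K \cap H = L$) of $K/L$ onto the compact orbit $K\cdot o \subseteq G/H$. This copy of $K/L$ is the candidate for the zero section.

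The structural heart is to produce a $K$--equivariant diffeomorphism $G/H \cong K \times_L V$, where $V$ is a suitable linear representation of $L$ and $K\times_L V$ denotes the vector bundle over $K/L$ associated to the principal $L$--bundle $K \to K/L$. To this end I would invoke the Cartan--Iwasawa--Malcev structure theory: there is a submanifold $E \cong \mathbb{R}^n$ with $n = \dim G - \dim K$, stable under conjugation by $K$ and carrying a linear (orthogonal) $K$--action, such that the multiplication map $K \times E \to G$ is a diffeomorphism. Applying the same structure to the closed connected subgroup $H$ with its maximal compact $L$, one arranges that $E_H := E \cap H$ is an $L$--invariant linear subspace giving a diffeomorphism $L \times E_H \cong H$ compatible with the decomposition of $G$. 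Passing to the quotient then exhibits $G/H$ as the associated bundle $K \times_L (E/E_H)$ with fiber $V = E/E_H \cong \mathbb{R}^{\,d}$, $d = \dim(G/H) - \dim(K/L)$, the zero section being exactly $K/L$.

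Granting this, the strong deformation retraction is immediate: fiber scaling $r_t([k,v]) := [k,(1-t)v]$ is well defined (because the $L$--action on $V$ is linear) and gives a strong deformation retraction of $G/H = K\times_L V$ onto its zero section $K/L$, which also records the promised vector bundle structure. Finally I would extract $d_{G/H}$. Because $G/H$ deformation retracts onto $K/L$, the two spaces have isomorphic $\mathbb{Z}_2$--homology; since $K/L$ is a closed manifold of dimension $\dim(K/L)$, its top homology $H_{\dim(K/L)}(K/L,\mathbb{Z}_2) = \mathbb{Z}_2$ is non-zero while all higher groups vanish. Hence the top non-vanishing $\mathbb{Z}_2$--homology of $G/H$ sits in degree $\dim(K/L)$, and by the definition of $d_{G/H}$ we obtain $d_{G/H} = \dim(G/H) - \dim(K/L)$.

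The main obstacle is the middle step — constructing the $K$--equivariant product decomposition and matching it with $H$. Since $\exp$ is in general neither surjective nor a diffeomorphism, one cannot simply split $\mathfrak g = \mathfrak k \oplus \mathfrak m$ and exponentiate; genuine global coordinates require the full Iwasawa--Malcev machinery, typically via the Levi--Malcev decomposition (Theorem \ref{Levimalcev}) to reduce to the semisimple and solvable cases and an inductive construction of the Euclidean factor, together with Mostow's refinement guaranteeing that the maximal compact subgroups can be nested as $L \subseteq K$ and that the complements can be chosen to respect $\mathfrak h$. Everything after this decomposition is formal.
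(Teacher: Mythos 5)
Your proposal is correct and follows essentially the same route as the paper, which gives no independent proof of this statement but cites Mostow \cite{Mos55} for the bundle structure and then deduces $d_{G/H}=s-t$ from the topological splittings $G=K\times\mathbb{R}^s$ and $H=L\times\mathbb{R}^t$ with $L\subseteq K$. The details you add --- the identification $K\cap H=L$ via maximality, the associated-bundle description $G/H\cong K\times_L V$ with its fiber-scaling strong deformation retraction, and reading off $d_{G/H}$ from the top $\mathbb{Z}_2$-homology of the closed connected manifold $K/L$ --- are all sound, and you correctly flag that the $K$-equivariant product decomposition adapted to $H$ is exactly the hard input supplied by Mostow's covariant-fibering theorem rather than something to be re-derived.
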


Since $G=K\times\mathbb R^s$ and $H=L\times\mathbb R^t$ where $L\subseteq K$
and $G/H\xrightarrow{\mathbb R^{s-t}}K/L$, we have $d_{G/H}=s-t$

For example since $X=\mathbb{C}^*$ is diffeomorphic to $\mathbb{S}^1\times\mathbb{R}$ we see that $d_X=1$.
Here we bring a more interesting example which we use later.

\begin{example}[Lemmas 1 and 2, \cite{Akh83}]\label{calculatingdX}  
Suppose $G$ is an algebraic group and $H$ is an algebraic subgroup with unipotent radicals $U$ and $V$ respectively. 
Then a direct computation of dimensions shows us that        
\[  
       d_{G/H}\; = \; \dim_{\mathbb C} G - \dim_{\mathbb C} H + \dim_{\mathbb C} U - \dim_{\mathbb C} V.
\]   
As a consequence, if $J$ is an algebraic subgroup such that $H\subset J \subset G$, then 
\[  
     d_{G/H} = d_{G/J} + d_{J/H}.
\]   
\end{example}

\begin{example}  \label{subgpBorel}   
We consider what happens for $S := SL(2,\mathbb{C})$ and various algebraic subgroups of this $S$.  
Note that $K =SU(2)$ here.    

\medskip   
{\bf Subexample (a):} \  
Let $H = B$ be a Borel subgroup of $S$.   
Then $S/H$ is compact and $d_{S/H} = 0$.   

\medskip   
{\bf Subexample (b):} \  
Let $H$ be a maximal unipotent subgroup of a Borel subgroup of $S$.  
Such $H$ is isomorphic to $\mathbb C$ and $L = \{ e \}$.   
The minimal compact orbit in $S/H$ is $K/L=SU(2)/\{e\}$ and $S/H$ fibers as a holomorphic   
$\mathbb C^*$--bundle over $\mathbb P_{1}$.   
It follows that $d_{S/H}=1$. 

\medskip   
{\bf Subexample (c):} \  
Now let $H = {\rm diag} (\alpha,\alpha^{-1})\cong\mathbb C^*$.  
Here $L \cong S^1$ and $V \cong    \mathbb R$, 
where the equivalences are topological.
Hence, the minimal compact orbit in $S/H$ is $K/L=SU(2)/{\mathbb S^1}$ and $S/H$ fibers 
as an affine $\mathbb C$--bundle over $\mathbb P_1$, i.e., $S/H$ is the affine quadric.   
Thus, $d_{S/H}=2$.   
The normalizer $N(H)$ of $H$ gives a 2:1 covering $S/H \to S/N(H)$, where the base 
is $\mathbb P_2 \setminus Q$ with $Q$ a quadric curve.  Thus $d_{S/N(H)}= d_{\mathbb P_2 \setminus Q} = 2$.  

{\bf Subexample (d):} \
Let $H = \{e\}$, then $d_S = 3$.  

\end{example}

\medskip   
\begin{example} 
Let $G$ be the Heisenberg group consisting of 
matrices of the form 
$$
     \left( \begin{array}{ccc}
      1 & x &y  \\
      0 & 1& z  \\
      0 & 0 & 1
     \end{array} \right)
 $$
where $x , y, z\in\mathbb C$. 
For any integer $k$ from 0 to 6, we can make discrete subgroups 
$\Gamma$ such that $d_{G/\Gamma}= k$. For example if we define $\Gamma$ to be a subgroup of $G$ with 
$x, z\in \mathbb Z$ and $y\in \mathbb{Z}+i\mathbb{Z}$
then $d_{G/\Gamma}=2$
since $G/\Gamma\xrightarrow{T}\mathbb{C}^*\times\mathbb{C}^*$, where $T$ is
a one dimensional complex torus.
\end{example}


\subsection{The fibration lemma}
If $H$ is a connected subgroup of a connected Lie group $G$ such that $I\subset H\subset G$, 
where $I$ is another connected subgroup of $G$,
by Iwasawa decomposition we have the homeomorphisms $G=K\times\mathbb R^s$, $H=L\times\mathbb R^t$, 
and $I=M\times\mathbb R^u$
where we can choose maximal compact subgroups $K, L$ and $M$ of $G , H$ and $I$ respectively,
such that $M\subseteq L\subseteq K$.    
By Theorem \ref{MOSTOW} we have the simple calculation    
$$
        d_{G/I} = s - u = s - t + t - u = d_{G/H} + d_{H/I}  .
$$
This was noted in example \ref{calculatingdX} in the setting of algebraic groups.

\medskip  
Since we are dealing with isotropy subgroups that are not connected, we need 
the next observations to handle these settings.   
Interesting such groups exist, e.g., see the proof of Proposition \ref{torbdle}.    

\begin{lemma}\label{flemma}      
Given a locally trivial fiber bundle $ X \stackrel{F}{\to} B$ with $F,X, B$ connected smooth manifolds 
the following were proved using spectral sequences in \S 2 in \cite{AG94}:    
\begin{enumerate}  
\item If the bundle is orientable (e.g., if $\pi_1(B) = 0$), then $d_X = d_F + d_B$.    
\item If $B$ has the homotopy type of a $q$--dimensional CW complex, then 
$d_X \ge d_F + (\dim B - q)$.    
\item If $B$ is homotopy equivalent to a compact manifold, then $d_X \ge d_F + d_B$.  
For example, this happens if $B$ is homogeneous with connected isotropy or is a solvmanifold \cite{Aus63}.   
\end{enumerate}  
\end{lemma}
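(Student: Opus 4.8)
The plan is to run the homology Serre spectral sequence of the fibration $F \to X \to B$ with coefficients in the field $\mathbb Z_2$, so that there are no extension problems and $H_m(X,\mathbb Z_2) \cong \bigoplus_{p+q=m} E^\infty_{p,q}$. It is convenient to rewrite the invariant in terms of the \emph{top} non--vanishing homology: if $\dim X = n$ then $d_X = n - t_X$, where $t_X := \max\{ k : H_k(X,\mathbb Z_2) \neq 0\}$, and similarly for $F$ and $B$. With $b := \dim B$ and $f := \dim F$ we have $n = b+f$, so the three claims become statements about $t_X$: part (1) asserts $t_X = t_B + t_F$, while parts (2) and (3) are the upper bounds $t_X \le Q + t_F$ (respectively $t_X \le t_B + t_F$).

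For part (1), since the bundle is orientable the action of $\pi_1(B)$ on $H_*(F,\mathbb Z_2)$ is trivial, so the local system is constant and the K\"unneth theorem over the field $\mathbb Z_2$ gives $E^2_{p,q} = H_p(B,\mathbb Z_2) \otimes H_q(F,\mathbb Z_2)$. The corner term $E^2_{t_B,t_F} = H_{t_B}(B)\otimes H_{t_F}(F)$ is non--zero, and I claim it is a permanent cycle that is not a boundary. Indeed, every outgoing differential $d^r$ lands in bidegree $(t_B - r,\, t_F + r -1)$, whose fiber factor $H_{t_F+r-1}(F)$ vanishes for $r \ge 2$ by maximality of $t_F$; and every incoming differential originates in bidegree $(t_B + r,\, t_F - r +1)$, whose base factor $H_{t_B+r}(B)$ vanishes by maximality of $t_B$. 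Hence $E^\infty_{t_B,t_F} = E^2_{t_B,t_F}\neq 0$. Moreover $(t_B,t_F)$ is the unique bidegree of maximal total degree with non--zero $E^2$ term, since any $(p,q)$ with $p+q > t_B + t_F$ forces $p > t_B$ or $q > t_F$ and hence $E^2_{p,q}=0$. Therefore $H_{t_B+t_F}(X,\mathbb Z_2)\neq 0$ while $H_k(X,\mathbb Z_2)=0$ for $k > t_B+t_F$, giving $t_X = t_B + t_F$ and $d_X = n - t_X = (b - t_B)+(f - t_F) = d_B + d_F$.

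For part (2) only the upper half of the argument is needed, and it survives the presence of a non--trivial local system. Writing $E^2_{p,q} = H_p(B,\, \mathcal H_q(F,\mathbb Z_2))$ with the possibly twisted coefficient system $\mathcal H_q$, the hypothesis that $B$ has the homotopy type of a $Q$--dimensional CW complex forces $E^2_{p,q}=0$ for $p > Q$, and maximality of $t_F$ forces $E^2_{p,q}=0$ for $q > t_F$ because $\mathcal H_q$ is built on $H_q(F,\mathbb Z_2)=0$. Thus every non--zero $E^2$ term, and a fortiori every $E^\infty$ term, has total degree at most $Q + t_F$, so $t_X \le Q + t_F$ and $d_X \ge d_F + (b - Q) = d_F + (\dim B - Q)$. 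For part (3), if $B$ is homotopy equivalent to a closed manifold $M$ (compact, without boundary) then $B$ has the homotopy type of a CW complex of dimension $Q = \dim M$; since every closed manifold is $\mathbb Z_2$--orientable, $\mathbb Z_2$--Poincar\'e duality gives $H_{\dim M}(M,\mathbb Z_2)\neq 0$, so $t_B = t_M = \dim M$ and hence $Q = \dim M = b - d_B$. Feeding this into part (2) yields $d_X \ge d_F + (b - Q) = d_F + d_B$.

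The main obstacle is the survival argument for the top corner in part (1): one must verify that no differential enters or leaves $E^2_{t_B,t_F}$, which is exactly where orientability (triviality of the $\pi_1(B)$--action, so that K\"unneth applies and the base factor kills incoming differentials) is essential. Once this permanence is established the rest is dimension bookkeeping, and parts (2) and (3) require only the vanishing ranges of the $E^2$ page together with the identification, via $\mathbb Z_2$--Poincar\'e duality, of the CW dimension of the compact model with $\dim B - d_B$.
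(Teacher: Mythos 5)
Your proposal is correct, and it is the same argument the paper points to: the paper gives no proof of its own but cites the spectral-sequence computations of \S 2 in \cite{AG94}, and your Serre spectral sequence over $\mathbb Z_2$ (corner-term permanence for part (1), $E^2$ vanishing ranges for part (2), and $\mathbb Z_2$--Poincar\'e duality identifying the CW dimension of the compact model with $\dim B - d_B$ for part (3)) is exactly that method, carried out correctly. Your parenthetical reading of ``compact manifold'' as closed in part (3) is the intended one, as the motivating examples ($K/L$ and compact solvmanifolds) are closed manifolds.
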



\chapter{Main Results}  \label{mainresults}

In this chapter we give the classification theorem for homogeneous K\"{a}hler manifolds
with discrete isotropy and with top non--vanishing homology in codimension at most two.

In section \ref{solvable} we prove that all such manifolds are solvmanifolds. 
This partially answers the 
modified question of Akhiezer discussed in section \ref{Akhiezerquestion}.

In section \ref{Tstars} we prove that any homogeneous K\"{a}hler manifold with discrete isotropy which is a torus 
bundle over $\mathbb C^*\times\mathbb C^*$ splits as a product.    

We use this in section \ref{maintheorem} to prove the main theorem. 
We prove that any homogeneous K\"{a}hler manifold $X$ with $d_X = 2$ and with discrete isotropy 
is a product of a Cousin group and 
one of $\{ e \}, \mathbb C^*$, $\mathbb C$, or $(\mathbb C^*)^2$.


\section{Reduction to solvable case}  \label{solvable}

We now prove one of the key ingredients that will allow us to prove our classification in the case of discrete isotropy.    
First we need the following lemmas.  

\begin{lemma}\label{tbdloverC}
Let $G$ be a connected complex Lie group that acts holomorphically, transitively, and     
effectively on any of the following complex manifolds:
\begin{enumerate}  
\item $C\times \mathbb C$,  
\item $C\times\mathbb C^*$, or
\item $C\times(\mathbb C^*)^2$
\end{enumerate}  
where $C$ is a Cousin group.   
Then $G$ is solvable.
\end{lemma}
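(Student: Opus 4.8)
The plan is to pass to the Lie algebra level and reduce the whole question to the base of the holomorphic reduction of $X=C\times A$. Since $C$ is a Cousin group we have $\mathscr{O}(C)=\mathbb{C}$, so every $f\in\mathscr{O}(X)$ is constant on each slice $C\times\{a\}$ and therefore factors through the projection $\pi\colon X\to A$; as $A$ (being $\mathbb{C}$, $\mathbb{C}^*$, or $(\mathbb{C}^*)^2$) is Stein and hence holomorphically separable, $\pi$ is exactly the holomorphic reduction (see section \ref{hreduction}) and $\mathscr{O}(X)=\pi^{*}\mathscr{O}(A)$. Because the $G$-action is effective, the fundamental vector field map $\mathfrak{g}\to\mathrm{Vect}^{\mathrm{hol}}(X)$, $v\mapsto\xi_v$, is injective, so it suffices to prove that $\mathfrak{g}$ is solvable (a connected Lie group is solvable iff its Lie algebra is).

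The key observation I would exploit is the shape of holomorphic vector fields on $X$. Using the translation-invariant frame $\partial_{c_1},\dots,\partial_{c_n}$ on the fibres $C=\mathbb{C}^n/\Gamma$ together with an invariant frame $\partial_{a_1},\dots$ on $A$, any $V\in\mathrm{Vect}^{\mathrm{hol}}(X)$ can be written $V=\sum_i f_i\,\partial_{c_i}+\sum_j g_j\,\partial_{a_j}$ with all coefficients in $\mathscr{O}(X)=\pi^{*}\mathscr{O}(A)$, i.e.\ depending only on the base variable $a$. Hence every such field is $\pi$-projectable and $\pi_{*}\colon\mathrm{Vect}^{\mathrm{hol}}(X)\to\mathrm{Vect}^{\mathrm{hol}}(A)$ is a Lie algebra homomorphism. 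Its kernel is the space of vertical fields $\sum_i f_i(a)\,\partial_{c_i}$, and since the $f_i$ do not depend on $c$ a one-line bracket computation shows this kernel is \emph{abelian}. Therefore $\mathfrak{g}$ is an extension of $\pi_{*}(\mathfrak{g})$ by an abelian ideal, so $\mathfrak{g}$ is solvable precisely when $\pi_{*}(\mathfrak{g})$ is. Equivalently, writing $\rho\colon G\to\mathrm{Aut}(A)$ for the induced action on the base, it suffices to show that $\rho(G)$ is solvable.

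For the first two cases this is immediate: $\mathrm{Aut}(\mathbb{C})=\mathrm{Aff}(1,\mathbb{C})$ is solvable, while the identity component of $\mathrm{Aut}(\mathbb{C}^*)$ is $\mathbb{C}^*$ acting by multiplication, which is abelian; since $\rho(G)$ is connected, $\pi_{*}(\mathfrak{g})$ is solvable in both cases and we are done. The remaining case $A=(\mathbb{C}^*)^2$ is the heart of the matter and the step I expect to be the main obstacle, because $\mathrm{Aut}((\mathbb{C}^*)^2)$ is \emph{not} solvable (it contains shear automorphisms). Writing $\rho(G)=\rho(S)\cdot\rho(R)$ with $\rho(R)$ a normal solvable subgroup, solvability of $\rho(G)$ is equivalent to $\rho(S)=\{e\}$, that is, to the assertion that the semisimple Levi factor $S$ acts trivially on $(\mathbb{C}^*)^2$.

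To settle this last point I would analyze the orbits of a hypothetical nontrivial connected semisimple complex group acting holomorphically on $(\mathbb{C}^*)^2$. Every orbit is a locally closed complex submanifold $S/L$ of complex dimension at most two. A positive-dimensional \emph{compact} orbit is impossible, since $(\mathbb{C}^*)^2$ is Stein and so contains no compact complex submanifold of positive dimension; in particular there are no one-dimensional orbits, because a one-dimensional homogeneous space of a semisimple group must be the flag manifold $\mathbb{P}_1$ (a non-compact homogeneous curve would force the action through the solvable group $\mathrm{Aut}(\mathbb{C})$ or $\mathrm{Aut}(\mathbb{C}^*)$). Thus a nontrivial action has a two-dimensional orbit, which is open; its complement is a union of zero-dimensional orbits, hence a discrete set, so there is a single open orbit $O=(\mathbb{C}^*)^2\setminus F$ with $F$ discrete and $\pi_1(O)=\pi_1((\mathbb{C}^*)^2)=\mathbb{Z}^2$. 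On the other hand, by the maximal-compact (Mostow) retraction (cf.\ Theorem \ref{MOSTOW}) the homogeneous space $O=S/L$ retracts onto the compact orbit $K/(K\cap L)$ of a maximal compact subgroup $K\subseteq S$, whose fundamental group is finite; this contradicts $\pi_1(O)=\mathbb{Z}^2$. Hence no nontrivial semisimple action exists, $\rho(S)=\{e\}$, so $\pi_{*}(\mathfrak{g})$ is solvable, and therefore $\mathfrak{g}$ — and with it $G$ — is solvable.
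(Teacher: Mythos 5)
Your reduction to the base is sound and is in fact a pleasant variant of the paper's argument: the paper also first shows the action descends to $A$ (via $\mathscr{O}(C)\cong\mathbb{C}$ and holomorphic separability of $A$), but where you observe that every holomorphic vector field on $C\times A$ is $\pi$-projectable with abelian vertical kernel, the paper argues at the group level that the Levi factor $S$, acting trivially on $A$, stabilizes a fiber $C\times\{z_0\}$ and maps into ${\rm Aut}^0(C)\cong C$, which is abelian; the two mechanisms are equivalent and your cases $A=\mathbb{C}$ and $A=\mathbb{C}^*$ are handled exactly as in the paper. The genuine gap is in your treatment of $A=(\mathbb{C}^*)^2$, at the final step. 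You contradict $\pi_1(O)$ infinite by asserting that $O=S/L$ retracts onto a compact orbit $K/(K\cap L)$ ``(cf.\ Theorem \ref{MOSTOW})''. But Theorem \ref{MOSTOW} is stated, and is only valid, for \emph{connected} closed isotropy -- the paper warns immediately after stating it that it fails in the non-connected setting -- and nothing in your argument controls $\pi_0(L)$. Indeed the homotopy sequence $\pi_1(S)\to\pi_1(S/L)\to\pi_0(L)\to 1$ shows that $\pi_1(O)\cong\mathbb{Z}^2$ is perfectly consistent with $L$ being a closed complex subgroup with infinitely many components, and this phenomenon really occurs for homogeneous surfaces of semisimple groups: taking $L=U\cdot\langle\,{\rm diag}(\lambda,\lambda^{-1})\,\rangle$ in $S=SL(2,\mathbb{C})$ with $|\lambda|\neq 1$ and $U$ maximal unipotent gives the Hopf surface $S/L=(\mathbb{C}^2\setminus\{0\})/\lambda^{\mathbb{Z}}$, which has $\pi_1\cong\mathbb{Z}$ and certainly does not retract onto a $K$-orbit. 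So the step as written would fail; what closes it is the K\"ahler input you never used: $O$ is open in the Stein manifold $(\mathbb{C}^*)^2$, hence K\"ahler, so by Theorem \ref{BO} the isotropy $L$ is \emph{algebraic}, in particular $\pi_0(L)$ is finite, and then the homotopy sequence (with $\pi_1(S)$ finite) contradicts $\pi_1(O)$ infinite. This is precisely how the paper argues (it writes $(\mathbb{C}^*)^2=S/H$ ``for some algebraic subgroup $H$'').

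A second, smaller lapse: you infer that the complement of the open orbit is discrete because it is a union of zero-dimensional orbits. Zero-dimensional orbits of the connected group $\rho(S)$ are fixed points, and a priori the fixed-point set could contain a curve; discreteness needs a linearization argument at a fixed point. Fortunately you do not need discreteness: the complement of the union of open orbits is exactly the fixed-point set, which is a proper closed analytic subset, so there is a single open orbit $O$ (the complement of an analytic set in a connected manifold is connected) and the inclusion $O\hookrightarrow(\mathbb{C}^*)^2$ induces a surjection on $\pi_1$, so $\pi_1(O)$ is still infinite, which is all the contradiction requires once the algebraicity of $L$ is in place.
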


\begin{proof}   
We first claim that the $G$--action on $C\times A$ 
induces a transitive $G$-action on $A$, where $A$ is $\mathbb C$, $\mathbb C^*$, or $\mathbb C^* \times \mathbb C^*$.
To see this let $p: C\times A \to A$ be the projection map to the second factor.
Since $\mathscr{O}(C)\cong \mathbb C$ and $A$ is holomorphically separable,
$p(g.(C \times \{ z \}))$ is always a point in $A$, for all $g\in G$ and $z\in A$. 
Since the $G$--action is holomorphic and $p$ is also holomorphic, this induces the required action on $A$. 

\medskip    
Let $G=S\ltimes R$ be a Levi-Malcev decomposition of the complex Lie group $G$ (Theorem \ref{Levimalcev}). 
For any complex Lie group $H$, if $\phi: S\to H$ is a Lie homomorphism, then $\phi(S)$ is semi-simple 
(see section 5.2., \cite{Hum72}), a fact which we use later.

\medskip 
If $A= \mathbb C$, it follows that the $G$--action on $\mathbb C$ is given by 
a homomorphism $\psi:G\to {\rm Aut} (\mathbb C)$. 
But ${\rm Aut} (\mathbb C)=\mathbb C^*\ltimes\mathbb C$ is solvable, i.e., $\psi(S)=\{e\}$ by a fact mentioned above.

\medskip 
Let $A=\mathbb C^*$. 
Note that ${\rm Aut}(\mathbb C^*)^0 \cong \mathbb C^*$, which is solvable.
Hence $\psi(S)=\{e\}$ for any homomorphism $\psi:G\to {\rm Aut} (\mathbb C^*)$.

\medskip 
Let $A = (\mathbb C^*)^2$ and assume that the $G$--action on $(\mathbb C^*)^2$ is given by 
a representation $\phi : G \to {\rm Aut} (\mathbb C^*)^2$.
First, $\phi(S)$ has no 1-dimensional orbit in $A$.   
Such an orbit would be a $\mathbb P_1$.
Since $(\mathbb C^*)^2$ is holomorphically separable, this is not possible.
If $S$ has a two dimensional orbit,  
then the $S$--action is transitive, i.e., $(\mathbb C^*)^2 = S/H$ for some algebraic subgroup $H$ of $S$.      
Consider part of the exact homology sequence for the fibration $S \xrightarrow{H} S/H$
as follows    
\[
       \cdots \; \longrightarrow \; \pi_1(S) \; \longrightarrow \; \pi_1((\mathbb C^*)^2)   
       \; \longrightarrow \;  \pi_0 (H) \to \cdots.
\]
Both $\pi_1(S)$ and $\pi_0 (H)$ are finite, but $\pi_1((\mathbb C^*)^2)\cong\mathbb Z^2$ is infinite giving us 
a contradiction.   
Hence $S$ acts ineffectively on $A = (\mathbb C^*)^2$ too.  

\medskip    
Under the $G$ action on $C\times A$, the semisimple group $S$  
stabilizes $C \times\{z_0\}$, which is given by 
a representation $\mu: G\to {\rm Aut}^{0} (C) \cong C$.
This means $\mu(S)=\{e\}$. 
So $S$ is acting ineffectively, i.e., $S=\{e\}$.
Hence $G$ is solvable in all cases. 
\end{proof}

\begin{lemma}\label{dlem2}   
Let $G$ be a connected complex Lie group and $\Gamma$ a discrete subgroup of $G$ 
such that $X=G/\Gamma$ is a homogeneous K\"{a}hler manifold.   
Assume that $\Gamma$ is not contained in any proper parabolic subgroup of $G$,  
$\mathscr{O}(G/\Gamma) = \mathbb C$, and $d_X \leq 2$.   
Then $G$ is solvable. 
\end{lemma}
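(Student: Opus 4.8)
The plan is to run the Levi--Malcev decomposition $G = S \ltimes R$ (Theorem \ref{Levimalcev}) and show $S = \{e\}$. Assume to the contrary that $G$ is mixed, so both $S$ and $R$ have positive dimension. The hypotheses $\mathscr{O}(X) = \mathbb{C}$ and ``$\Gamma$ not contained in a proper parabolic'' are exactly those of Lemma \ref{radicalclosed}, so I would first invoke it to conclude that the radical orbits are closed, i.e. $R.\Gamma$ is closed in $G$. This produces the homogeneous fibration $X = G/\Gamma \to B := G/R.\Gamma$ whose fiber is the radical orbit $F := R.\Gamma/\Gamma \cong R/(R \cap \Gamma)$, a K\"ahler solvmanifold with discrete isotropy. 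Three facts about the base should be recorded at once: the projection $\pi : G \to G/R \cong S$ identifies $B$ with $S/\pi(\Gamma)$; closedness of $R.\Gamma$ makes $\pi(\Gamma)$ a discrete subgroup of $S$; and if $\pi(\Gamma)$ were contained in a proper parabolic $P'$ of $S$, then $\Gamma \subset \pi^{-1}(P') = P' \ltimes R$ would be a proper parabolic of $G$, contradicting the hypothesis. Finally, since holomorphic functions pull back injectively along $\pi$, one has $\mathscr{O}(B) = \mathbb{C}$.

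The argument then splits according to the analytic behavior of the radical orbit. If $\mathscr{O}(F) = \mathscr{O}(R.\Gamma/\Gamma) = \mathbb{C}$, then conditions (a), (b), (c) of Theorem \ref{Gi041} hold together with the extra hypothesis of Theorem \ref{Gi04}, and that theorem immediately yields $S = \{e\}$, contradicting mixedness. So the whole of the work lies in the complementary case $\mathscr{O}(F) \neq \mathbb{C}$, i.e. the radical orbit carries non-constant holomorphic functions that, because $\mathscr{O}(X) = \mathbb{C}$, fail to extend to $X$.

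In that case I would exploit $d_X \le 2$ through the fibration lemma (Lemma \ref{flemma}), which gives $d_F \le d_X \le 2$. Since $F$ is a K\"ahler solvmanifold with $\mathscr{O}(F) \neq \mathbb{C}$, Theorem \ref{OeRi88} presents its holomorphic reduction $F \to Y_F$ with $Y_F$ Stein and positive-dimensional and fiber a Cousin group; because $R$ is solvable, $Y_F$ has the form $(\mathbb{C}^*)^a \times \mathbb{C}^b$ (Snow's theorem, as in the discussion of Lie's flag theorem), and computing $d_{(\mathbb{C}^*)^a \times \mathbb{C}^b} = a + 2b$ the bound $d_F \le 2$ forces $a + 2b \le 2$, so $Y_F$ is $\mathbb{C}^*$, $(\mathbb{C}^*)^2$, or $\mathbb{C}$. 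The goal is then to factor out the Cousin (and toral) part of the radical orbit, passing to a fibration $X \to G/\widehat{C}.\Gamma$ with closed central Cousin fiber, so as to reduce the radical to complex dimension two, at which point Theorem \ref{AG942} (the $\dim_{\mathbb{C}} R = 2$ incarnation of the present statement) applies and gives $S = \{e\}$. A cleaner alternative endgame, if one can show the base $B = S/\pi(\Gamma)$ is itself K\"ahler, is to apply Theorem \ref{BO}: a K\"ahler $S/\pi(\Gamma)$ with $S$ semisimple forces $\pi(\Gamma)$ to be algebraic, hence finite (being discrete, by the consequence of Theorem \ref{BO} noted in the text), whence $\mathscr{O}(B) \neq \mathbb{C}$ unless $S = \{e\}$, directly contradicting $\mathscr{O}(B) = \mathbb{C}$.

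The main obstacle is precisely this last case, $\mathscr{O}(F) \neq \mathbb{C}$. The difficulty is that the non-constant functions live only on the radical orbit and do not see the semisimple directions, so $d_X \le 2$ must be leveraged both to control $\dim_{\mathbb{C}} R$ and to strip away the compact-toral part of $F$ cleanly enough to land in the scope of Theorem \ref{AG942}; equivalently, one must extract enough K\"ahler or compactness structure on the base $S/\pi(\Gamma)$ to run the Theorem \ref{BO} argument. Verifying that the Cousin part of the radical orbit arises from a \emph{closed} (ideally central) complex subgroup of $R$ whose quotient is again of the required form is the delicate technical point, since for a merely solvable rather than nilpotent radical the relevant subgroup need not be central.
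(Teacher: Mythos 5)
Your opening matches the paper exactly: Lemma \ref{radicalclosed} closes the radical orbits, giving $G/\Gamma \to G/R.\Gamma$, and Theorem \ref{Gi04} disposes of the case $\mathscr{O}(R.\Gamma/\Gamma)=\mathbb{C}$. In the remaining case, however, your argument has a genuine gap, and it is precisely the one you flag at the end: you never produce a closed subgroup \emph{normal in $G$} by which to quotient. The paper's mechanism is the piece you are missing. It takes the holomorphic reduction $R.\Gamma/\Gamma \to R.\Gamma/J$ of the radical orbit, so $\Gamma \subset J \subset R.\Gamma$, and proves $J^{0}\lhd G$ as follows: since holomorphic functions on $G/J$ pull back to $G/\Gamma$, one has $\mathscr{O}(G/J)=\mathbb{C}$, so if $N_G(J^{0})\neq G$ then Theorem \ref{HO84} would place $J$, and hence $\Gamma$, inside a proper parabolic subgroup of $G$, contradicting hypothesis (b). Thus $J^{0}$ is normal in $G$ --- centrality, whose possible failure for solvable $R$ worries you, is never needed, and your instinct to factor out the Cousin fiber itself is replaced by quotienting by the identity component of the reduction subgroup. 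Setting $\widehat{G}:=G/J^{0}=S.\widehat{R}$ with $\widehat{R}:=R/J^{0}$, the fibration lemma plus Steinness of the reduction base give $2\geq d_X \geq d_{R.\Gamma/J}\geq \dim_{\mathbb{C}} R.\Gamma/J = \dim_{\mathbb{C}}\widehat{R}$ --- the same count as your $a+2b\le 2$ on $Y_F$, but now attached to an honest group quotient. If $\dim_{\mathbb{C}}\widehat{R}=1$, the $S$--action on $\widehat{R}$ is trivial (a semisimple group has no nontrivial one--dimensional representation) and Theorem \ref{Gi04} yields $S=\{e\}$; if $\dim_{\mathbb{C}}\widehat{R}=2$, either $\widehat{G}=S\times\widehat{R}$, again Theorem \ref{Gi04}, or $\widehat{G}$ is a nontrivial semidirect product and Theorem \ref{AG942} applies. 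Without the normality of $J^{0}$, your dimension bound on $Y_F$ is correct but connects to nothing: there is no group--theoretic reduction to which Theorem \ref{AG942} can be applied.

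Your ``cleaner alternative endgame'' is not salvageable as stated. There is no reason the base $B=S/\Lambda$ is K\"{a}hler: the K\"{a}hler property descends along a fibration essentially only when the fiber is compact (Blanchard's integration argument, used in the paper in Lemma \ref{StimesR} for torus fibers), and here the fiber $R.\Gamma/\Gamma$ is noncompact whenever $\mathscr{O}(R.\Gamma/\Gamma)\neq\mathbb{C}$. Moreover, if one \emph{could} show $B$ K\"{a}hler, then Theorem \ref{BO} would make the discrete group $\Lambda$ finite, and since $S$ is Stein this would contradict $\mathscr{O}(B)=\mathbb{C}$ at once --- in other words, K\"{a}hlerness of the base is essentially equivalent to the lemma itself, and assuming it is circular. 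The paper's entire detour through $J^{0}$, Theorem \ref{HO84}, and the quotient $\widehat{G}$ exists precisely to avoid ever needing a K\"{a}hler structure on $S/\Lambda$.
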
    

\begin{proof}  
According to Lemma \ref{radicalclosed} the $R$ orbits are closed and we have the fibration:
\[   
            G/\Gamma \; \longrightarrow \;  G/R.\Gamma \; = \; S/\Lambda
\]  
where $\Lambda=S\cap R.\Gamma$ is Zariski dense and discrete in $S$.     
There are two cases. 
One case is when the $R$ orbits have no non-constant holomorphic functions. 
Using Theorem \ref{Gi04} we get the desired conclusion.
The other case is when $\mathscr{O} (R.\Gamma/\Gamma)\neq \mathbb C$. 
Let $J$ be the closed complex subgroup of $R .\Gamma$ that contains $\Gamma$ 
so that we have the holomorphic reduction $R.\Gamma/\Gamma \to R.\Gamma/J$.   
(Note that this exists by the discussion in \S \ref{hreduction}.)  
This holomorphic reduction  
gives the intermediate fibration 
\[
          G/\Gamma\to G/J\to G/R.\Gamma
\]  
 
Note that since $J^{0}\lhd G$ (Theorem \ref{HO84}, page \pageref{HO84}) we can make the following quotients: 
\[
      \widehat{G}=G/J^{0} \quad\quad \widehat{R}=R/J^{0} \quad\quad  \widehat{J}=J/J^{0}
\]
which gives $G/J=\widehat{G}/\widehat{J}$ and $\widehat{G}=S.\widehat{R}$, where $S$ is a Levi factor for $G$.
Applying the result stated in section \ref{flemma} 
on page \pageref{flemma} and the topological condition $d_{G/\Gamma}\leq 2$ we have
\[
        2 \; \geq \; d_{G/\Gamma} \; \geq \; d_{R.\Gamma/J} \; \geq \; \dim_{\mathbb C} R.\Gamma/J. 
          \;\;   
\]
Hence $\dim_{\mathbb C} \widehat{R}=1$, or 2. 
If $\dim_{\mathbb C} \widehat{R}=1$, then $S$ acts trivially on $\widehat{R}$ and 
Theorem \ref{Gi04} on page \pageref{Gi04} applies and    
$\widehat{G}$ is solvable.       
Since $\widehat{G} = S.\widehat{R}$ we have $S = \{e\}$.
But $G= S.R$, hence $G$ is also solvable.  
If $\dim_{\mathbb C}\widehat{R}=2$, then either $\widehat{G}=S\times \widehat{R}$ which again is 
the case dealt with in Theorem \ref{Gi04},
or $\widehat{G}$ is not a trivial product of $S$ and $\widehat{R}$. 
This with Theorem \ref{AG942} on page \pageref{AG942} proves $G$ is solvable.    
\end{proof}


\begin{lemma}\label{closedness}
Let $C$ be a connected Abelian Lie group and $A$ a closed, connected subgroup. 
Then $d_{A}\leq d_{C}$.
\end{lemma}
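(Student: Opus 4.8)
The plan is to reduce everything to the additivity of the invariant $d$ along the tower $\{e\} \subset A \subset C$, an additivity which is available here precisely because $A$ is \emph{connected}.

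First I would recall that, by the classification of connected abelian real Lie groups recalled earlier, both $C$ and its closed connected subgroup $A$ split topologically (indeed as groups) as a maximal compact torus times a Euclidean factor, say $C \cong K_C \times \mathbb{R}^{d_C}$ and $A \cong K_A \times \mathbb{R}^{d_A}$. In particular $d_C$ and $d_A$ are exactly the dimensions of these vector parts, and one may arrange $K_A \subseteq K_C$. This places us in the situation of the Iwasawa-type decomposition underlying Theorem \ref{MOSTOW}, with compatible maximal compact subgroups $\{e\} \subseteq K_A \subseteq K_C$ for the chain $\{e\} \subset A \subset C$.

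Next I would invoke the additivity computation recorded in the fibration-lemma discussion (the consequence of Theorem \ref{MOSTOW} that $d_{G/I} = d_{G/H} + d_{H/I}$ whenever $I \subset H \subset G$ are connected). Taking $G = C$, $H = A$, and $I = \{e\}$ yields
\[
      d_C \; = \; d_{C/A} \; + \; d_A .
\]
Since $C/A$ is again a connected manifold, its invariant is a minimum of nonnegative codimensions and hence $d_{C/A} \geq 0$; therefore $d_A \leq d_C$, as required.

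There is no serious obstacle here. The only point to verify with care is that the additivity equality genuinely applies, i.e.\ that maximal compact subgroups can be chosen compatibly along the chain; for abelian $C$ this is automatic from the splitting $C = K_C \times \mathbb{R}^{d_C}$, so the connectedness hypothesis on $A$ is exactly what lets us bypass the inequality forms of Lemma \ref{flemma} and use the clean equality instead. Alternatively, one could run the principal bundle $A \to C \to C/A$ through Lemma \ref{flemma}(3), using that the abelian base $C/A$ is homotopy equivalent to its compact torus, to obtain $d_C \geq d_A + d_{C/A} \geq d_A$ directly.
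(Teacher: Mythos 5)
Your proof is correct and is essentially the paper's own argument: the paper likewise decomposes $C = K \times V$ and $A = L \times W$ using the uniqueness of maximal compact subgroups in the Abelian setting and concludes $d_A = \dim_{\mathbb R} W \le \dim_{\mathbb R} V = d_C$ directly. Your detour through the additivity identity $d_C = d_{C/A} + d_A$ is just that same Iwasawa/Mostow dimension count $s-u=(s-t)+(t-u)$ repackaged, so the two proofs coincide in substance.
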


\begin{proof}
 The maximal compact subgroups in this setting are unique and 
$C=K\times V$ and $A=L\times W$ with $L\subseteq K$ 
and with the vector space group $W$ contained in the vector space group $V$. 
It now follows from the definition of $d$ that 
$ d_A = \dim_{\mathbb R} W \le \dim_{\mathbb R} V = d_{C}$. 
\end{proof} 
    
\begin{lemma} \label{Sorbitclosed}  
Let $G$ be a connected complex Lie group, and $\Gamma$ a discrete subgroup.
Let $X=G/\Gamma$ be a K\"{a}hler manifold.
For any semisimple Lie subgroup $S$ of $G$, the $S$ orbit $S/S\cap\Gamma$ is closed.
\end{lemma}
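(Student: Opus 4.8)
The plan is to convert the statement into a closedness question inside the group and then feed the orbit into the Kähler criterion for semisimple groups. Writing $o = e\Gamma$ for the base point, the $S$–orbit of $o$ is the image of $S\Gamma$ under the covering map $\pi\colon G\to G/\Gamma$; since $\Gamma$ is a subgroup, $S\Gamma$ is $\pi$–saturated, so $S.o$ is closed in $X$ if and only if $S\Gamma$ is closed in $G$. The isotropy of the $S$–action at $o$ is exactly $S\cap\Gamma$, which is discrete, and the orbit map realises $S.o$ as the injectively immersed complex submanifold $S/(S\cap\Gamma)$.

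First I would record that this orbit is itself Kähler. The inclusion $S/(S\cap\Gamma)\hookrightarrow X$ is a holomorphic immersion, so pulling back the Kähler form of $X$ yields a closed, positive $(1,1)$–form on $S/(S\cap\Gamma)$; hence $S/(S\cap\Gamma)$ is a homogeneous Kähler manifold under the semisimple group $S$ with discrete isotropy. By Theorem \ref{BO} the isotropy $S\cap\Gamma$ must then be algebraic, and a discrete algebraic subgroup is finite. Thus $F := S\cap\Gamma$ is finite and the orbit is, as a complex manifold, a finite quotient of the semisimple group $S$.

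It remains to upgrade finiteness of the isotropy to closedness of the orbit, and this is where I expect the real work to lie. I would argue by contradiction: if $Y := \overline{S.o}$ strictly contains $S.o$, then $S.o$ is open and dense in $Y$ and the boundary $Y\setminus S.o$ is a nonempty closed $S$–invariant set of strictly smaller dimension, hence contains an $S$–orbit of minimal dimension, which is therefore closed in $X$. This boundary orbit $S/S_y$ is again an immersed complex submanifold, hence Kähler, so by Theorem \ref{BO} its isotropy $S_y$ is algebraic of positive dimension. The parallelizability of $X=G/\Gamma$ excludes the parabolic case at once: a trivial holomorphic tangent bundle forbids rational curves, so no flag manifold $S/P$ can occur as a boundary submanifold. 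The delicate case is a non–compact closed boundary orbit with reductive isotropy, such as an affine quadric $SL(2,\mathbb C)/\mathbb C^*$; to rule this out I would first average the Kähler form over a maximal compact $K\subset S$ (whose Lie algebra is semisimple, so the $K$–action admits a moment map $\mu$) and then use the Cartan decomposition $S=K\exp(\mathfrak p)$ together with properness of $\|\mu\|$ along the directions $\exp(\mathfrak p)$ to show the orbit map is proper, forcing the boundary to be empty. This properness argument is the step I expect to be the main obstacle.
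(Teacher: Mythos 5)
Your preliminary steps are fine as far as they go: the pullback of the K\"{a}hler form under the injective holomorphic immersion $S/(S\cap\Gamma)\hookrightarrow X$ does make the orbit K\"{a}hler, and Theorem \ref{BO} then forces $S\cap\Gamma$ to be algebraic, hence finite. But the proof does not close, and precisely where you yourself predicted trouble. Worse, your contradiction scenario is internally impossible for a reason you overlooked, and noticing it dissolves your entire third paragraph: the $S$-isotropy at \emph{any} point $g\Gamma$ of $X$ is $S\cap g\Gamma g^{-1}$, which is discrete because $\Gamma$ is. So every $S$-orbit in $X$ --- including any would-be boundary orbit --- has the same dimension $\dim S$, and a boundary orbit with positive-dimensional (parabolic or reductive) isotropy simply cannot occur; there is nothing to exclude case by case, no appeal to parallelizability, and no affine quadric to rule out. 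This is exactly how the paper argues: it invokes Theorem 3.6 of \cite{GMO11} (applicable here by Remark 2.2 of that paper), which guarantees that in this K\"{a}hler setting an $S$-orbit of minimal dimension is closed because a non-closed orbit has boundary consisting of orbits of strictly smaller dimension; since all isotropies are discrete, all orbits have the same (hence minimal) dimension, so all are closed. The lemma follows in three lines.

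The second, independent problem is that the step you do lean on --- that $S.o$ is open and dense in its closure $Y$ with boundary a lower-dimensional $S$-invariant set --- is not automatic for holomorphic, non-algebraic actions: an irrational winding gives an orbit that is dense in its closure and not open in it. In the K\"{a}hler setting this dimension drop at the boundary is precisely the nontrivial content of Theorem 3.6 in \cite{GMO11}, and your proposed route to it (averaging over a maximal compact $K$, producing a moment map $\mu$, then properness of $\|\mu\|$ along the $\exp(\mathfrak p)$-directions in the Cartan decomposition) is left as a sketch: no argument is offered for the properness, and the norm of a moment map need not be proper on a noncompact orbit closure. Since you flag this as the expected main obstacle and do not supply it, the proof as written has a genuine gap --- one that, by the first point, is also an unnecessary detour around a one-line observation about discrete isotropy.
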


\begin{proof}
Let $S$ be a complex semi-simple Lie subgroup of $G$.   
Pick $x\in X$ so that the $S$ orbit through $x$ is of minimal dimension.
Such an $S$ orbit is necessarily closed inside $X$. Otherwise, it has a boundary 
of a smaller dimension (Theorem 3.6, \cite{GMO11}).  
Note that remark 2.2 in the same paper guarantees that we are in the situation of the theorem.   
Since $\dim S.x=\dim S-\dim (S\cap \Gamma)$, and $\dim (S\cap\Gamma)=0$, 
we conclude that all $S$ orbits are minimal, i.e. of the same dimension and closed. 
\end{proof}

\begin{lemma}\label{dthree}
Let $\Gamma$ be a finite subgroup of a semisimple Lie group $S$.
Then $d_{S/\Gamma} \geq 3$.
\end{lemma}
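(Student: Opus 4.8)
The plan is to realize $S/\Gamma$ as the total space of a bundle over a Euclidean symmetric space and thereby reduce the computation of $d_{S/\Gamma}$ to the dimension of $S$. Note first that $S$ is a \emph{complex} semisimple Lie group, hence noncompact, and fix a maximal compact subgroup $K \subset S$. Since $\Gamma$ is finite it is in particular a compact subgroup, and every compact subgroup of a Lie group lies in a maximal compact one; after conjugating $\Gamma$ (an inner automorphism of $S$, which replaces $S/\Gamma$ by a diffeomorphic manifold) I may therefore assume $\Gamma \subseteq K$. I would explicitly remark that Mostow's covariant fibration theorem (Theorem \ref{MOSTOW}) does \emph{not} apply directly here, since it requires connected isotropy, and that this is precisely why the argument is routed through the fibration lemma below.

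With $\Gamma \subseteq K \subseteq S$ in hand, consider the homogeneous fibration
\[
    S/\Gamma \; \xrightarrow{\;K/\Gamma\;} \; S/K .
\]
The base $S/K$ is the Riemannian symmetric space of the noncompact semisimple group $S$, hence diffeomorphic to $\mathbb{R}^{s}$ with $s = \dim_{\mathbb{R}} S - \dim_{\mathbb{R}} K$; because $S$ is the complexification of $K$ one has $\dim_{\mathbb{R}} K = \dim_{\mathbb{C}} S$, so $s = \dim_{\mathbb{C}} S$. The fiber $K/\Gamma$ is a finite quotient of the compact group $K$, hence compact, so $d_{K/\Gamma} = 0$; and since $S/K \cong \mathbb{R}^{s}$ is contractible, $d_{S/K} = d_{\mathbb{R}^{s}} = s$.

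Since $S/K$ is contractible it is simply connected, so the bundle is orientable and part (1) of the fibration lemma (Lemma \ref{flemma}) applies, giving $d_{S/\Gamma} = d_{K/\Gamma} + d_{S/K} = 0 + s = \dim_{\mathbb{C}} S$. Finally, a complex semisimple Lie algebra is a direct sum of simple ideals, each of complex dimension at least $3$ (the smallest being $\mathfrak{sl}_2(\mathbb{C})$), so $\dim_{\mathbb{C}} S \ge 3$ and therefore $d_{S/\Gamma} \ge 3$, as desired.

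The lemma is essentially an assembly of standard facts, so there is no deep obstacle; the points that warrant care are (a) conjugating $\Gamma$ into $K$ and checking that this does not change $S/\Gamma$ up to diffeomorphism, and (b) verifying the hypotheses of the fibration lemma, namely the identification $S/K \cong \mathbb{R}^{s}$ and the orientability furnished by contractibility of the base. The conceptual crux is simply recognizing the equality $d_{S/\Gamma} = \dim_{\mathbb{C}} S$ together with the elementary bound $\dim_{\mathbb{C}} S \ge 3$.
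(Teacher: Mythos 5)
Your proof is correct and follows essentially the same route as the paper, which compresses the argument into the single line $d_{S/\Gamma} = d_S = \operatorname{codim}_{\mathbb R} K \geq 3$ with $S = K^{\mathbb C}$. You have merely made explicit the steps the paper leaves implicit --- conjugating $\Gamma$ into the maximal compact $K$, identifying $S/K \cong \mathbb R^{s}$ with $s = \dim_{\mathbb C} S$ via the Cartan decomposition, and invoking the fibration lemma (justified there by simple connectivity of the base, since Theorem \ref{MOSTOW} indeed requires connected isotropy) --- together with the bound $\dim_{\mathbb C} S \geq 3$ coming from $\mathfrak{sl}_2(\mathbb C)$.
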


\begin{proof}
This follows since 
\[
      d_{S/\Gamma} \; = \; d_S \; = \; {\rm codim}_{\mathbb R} K \geq 3,
\]
where $S=K^{\mathbb C}$ and $K$ is any maximal compact subgroup of $S$.
\end{proof}

\begin{lemma}  \label{StimesR}
Suppose $G = SL(2,\mathbb C) \ltimes R$  and $\Gamma$ is a 
discrete subgroup $G$ with $X=G/\Gamma$ K\"{a}hler and $d_{X} \le 2$.   
Then $\Gamma$ is not contained in a proper parabolic subgroup $P$ of $G$
\end{lemma}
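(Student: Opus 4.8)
The plan is to argue by contradiction. First I would pin down the proper parabolic subgroups of $G = SL(2,\mathbb{C}) \ltimes R$. Since $R$ is the radical and every parabolic contains $R$, a proper parabolic $P$ projects onto a proper parabolic of $S := SL(2,\mathbb{C})$, and the only such subgroups are the Borel subgroups. Hence, up to conjugacy, $P = B \ltimes R$ with $B$ a Borel subgroup of $S$, and one checks directly that $S \cap P = B$. So I assume $\Gamma \subseteq P$ and seek a contradiction with $d_X \le 2$.

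The key object is the $S$-orbit $O := S\cdot e\Gamma = S/(S\cap\Gamma)$ through the base point. By Lemma~\ref{Sorbitclosed} it is closed in $X$, and as a complex submanifold of the K\"ahler manifold $X$ it is itself K\"ahler. The inclusion $\Gamma \subseteq P$ gives $S\cap\Gamma \subseteq S\cap P = B$, so $S\cap\Gamma$ is discrete. Since $O = S/(S\cap\Gamma)$ is K\"ahler and $S$ is semisimple, Theorem~\ref{BO} forces $S\cap\Gamma$ to be algebraic, hence finite. Lemma~\ref{dthree} then yields $d_O = d_{S/(S\cap\Gamma)} = d_S = 3$.

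It remains to transfer this to $X$. Because $\Gamma \subseteq P$ we have the homogeneous fibration $X = G/\Gamma \to G/P = S/B \cong \mathbb{P}_1$ with fiber $P/\Gamma$; since the base is simply connected the bundle is orientable and part~(1) of Lemma~\ref{flemma} gives $d_X = d_{P/\Gamma} + d_{\mathbb{P}_1} = d_{P/\Gamma}$. Now $O$ surjects onto the base and meets the fiber in the closed submanifold $O\cap(P/\Gamma) = B/(S\cap\Gamma)$, which, as $B \cong S^1\times\mathbb{R}^3$ modulo a finite group, satisfies $d_{B/(S\cap\Gamma)} = 3$. I would then finish by showing $d_{P/\Gamma} \ge d_{B/(S\cap\Gamma)} = 3$, contradicting $d_X = d_{P/\Gamma} \le 2$.

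The main obstacle is precisely this last comparison: a monotonicity statement asserting that a closed submanifold cannot have a larger invariant $d$ than its ambient manifold. This is the role played in the abelian case by Lemma~\ref{closedness}, but here the relevant groups $B$ and $S$ are non-abelian, so that lemma does not apply directly. I expect the cleanest way around it is to bypass the fiber comparison entirely: by the proof of Lemma~\ref{Sorbitclosed} every $S$-orbit in $X$ is closed, of the same dimension, and with finite isotropy, so the $S$-action is proper, $X \to X/S$ is a (possibly orbifold) fiber bundle with fiber $O$, and applying Lemma~\ref{flemma} to it gives $d_X = d_O + d_{X/S} \ge d_O = 3$ at once. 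Establishing this bundle structure for $X \to X/S$ -- equivalently, the properness of the $S$-action and the regularity of the quotient -- is the technical heart of the argument.
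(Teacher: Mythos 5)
Your setup is sound and, up to a point, matches the paper: identifying $P = B \ltimes R$, reducing $d_{\mathbb P_1}=0$ via the orientable fibration $G/\Gamma \to G/P$, and using Lemma \ref{Sorbitclosed}, Theorem \ref{BO} and Lemma \ref{dthree} to get that $S\cap\Gamma$ is finite and $d_{S/S\cap\Gamma}=3$ is exactly where the paper's proof also starts. But the transfer step $d_X \ge 3$ is a genuine gap, and both of your routes to it fail. The monotonicity you hope for --- that a closed submanifold cannot have larger invariant $d$ than the ambient manifold --- is simply false as a topological principle: an affine $2$--plane missing the origin is closed in $X = \mathbb R^4 \setminus \{0\} \simeq \mathbb S^3 \times \mathbb R$, yet $d_{\mathbb R^2} = 2 > 1 = d_X$. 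This is why the paper only has Lemma \ref{closedness} for closed connected subgroups of \emph{abelian} groups, and why it cannot be quoted for $B/(S\cap\Gamma)$ inside $P/\Gamma$. Your fallback is also unsound: closed orbits of constant dimension with finite isotropy do \emph{not} imply that the $S$--action is proper or that $X \to X/S$ is a fiber bundle over a manifold. The standard counterexample is $\mathbb R$ acting on $\mathbb R^2\setminus\{0\}$ by $(x,y)\mapsto(e^t x, e^{-t}y)$: every orbit is closed with trivial isotropy, but the quotient is non--Hausdorff and the action is not proper. Since $SL(2,\mathbb C)$ is noncompact, properness is not free, and Lemma \ref{flemma} is stated only for locally trivial bundles of connected manifolds, so it cannot be applied to $X \to X/S$ without the very structure you have not established.

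What the paper does instead is precisely the machinery your single comparison step would have to replace. It takes the holomorphic reduction $P/\Gamma \to P/J$ of the \emph{solvmanifold} $P/\Gamma$, so the fiber $F=J/\Gamma$ is a Cousin group and the base $P/J$ is Stein with $d_{P/J}\le 2$, hence $\mathbb C$, $\mathbb C^*$, $(\mathbb C^*)^2$, or a Klein bottle; it then proves $G/J$ is K\"ahler in each case (Kodaira's theorem for the one--dimensional bases, Blanchard's pushdown across the torus fiber for $(\mathbb C^*)^2$), so that Theorem \ref{BO} makes $S\cap J$ \emph{algebraic}; finally it enumerates the algebraic subgroups of $B$ and, case by case on $\dim(S\cap J)$, contradicts $d_{S/S\cap\Gamma}=3$ using the fibration lemma together with Lemma \ref{closedness} applied to the abelian fiber $A_1$ sitting inside the Cousin group $F$ --- the one situation where the monotonicity of $d$ is actually available. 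In short: your first two paragraphs reconstruct the paper's starting position, but the ``technical heart'' you defer is not a technicality; it is the entire content of the lemma, and the properness claim you propose to carry it is false in the stated generality.
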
  

\begin{proof} 
We assume $\Gamma$ is contained in a proper parabolic subgroup $P$ of $G$     
and derive a contradiction.    
Such a group has the form $P = B \ltimes R$,  
where $B$ is isomorphic to a Borel subgroup of $SL(2,\mathbb C)$.   
Consider the holomorphic reduction
\[  
       P/\Gamma \; \xrightarrow{J/\Gamma} \; P/J .   
\]    
and note that because $P$ is solvable, $J/\Gamma$ is a Cousin group \cite{OR88}  
and $P/J$ is a Stein manifold \cite{HO86}.  
Note that $\mathscr{O}(P/\Gamma) \not= \mathbb C$.  
Otherwise, $P/\Gamma$ itself would be a Cousin group and $P$ would be Abelian, a contradiction.         
Thus, $P/J$ is a positive dimensional Stein manifold with $d_{P/J} \leq 2$. 
Hence $P/J \cong \mathbb C$, $\mathbb C^*$, $\mathbb C^*\times \mathbb C^*$, or the complex Klein bottle.   

\medskip
Consider the following diagram with its induced $S=SL(2,\mathbb C)$-orbit fibrations    

\begin{equation}\label{semisimplefibration}
\begin{array}{ccccc}
      G/\Gamma \; &\;  \stackrel{F=J/\Gamma}{\longrightarrow} &  G/J       
      & \stackrel{P/J}{\longrightarrow}     & G/P=\mathbb{P}_1\\
       \cup    &                                    & \cup      &                                 &    ||          \\
      S/S\cap\Gamma \; & \;  \stackrel{A_1}{\longrightarrow}        & 
     S/S\cap J & \stackrel{A_2}{\longrightarrow}     & S/B=\mathbb P_1
\end{array}
\end{equation}  

\medskip   
Note that since $\dim P/J >0$ and $P/J$ is not compact, it follows that $d_F \le 1$ by the Fibration Lemma.  
Since $A_1$ is a closed Abelian subgroup of $F$, Lemma \ref{closedness} implies $d_{A_1}\leq 1$. 
Also Lemma \ref{dthree} for $S:=SL(2,\mathbb C)$ implies that $d_{S/S\cap\Gamma} = 3$.   
We will consider all the possibilities for $S \cap J$ and use the fibration $S/S\cap\Gamma \to S/S \cap J$  
in order to derive a contradiction.   

\medskip
\begin{itemize}
\item First observe that $G/J$ is K\"{a}hler in all cases:  

\subitem (a) The one dimensional space $P/J$ is equivariantly embeddable in $\mathbb P_1$.    
So $G/J$ is an open subset in a homogeneous $\mathbb P_1$--bundle over $\mathbb P_1$   
which is K\"{a}hler by a theorem of Kodaira \cite{Kod54}. 
Thus $G/J$ is K\"{a}hler.

\subitem (b) If $P/J\cong\mathbb C^*\times\mathbb C^*$, then Lemma \ref{flemma} gives $d_{G/J}=2$, and $d_F=0$.
In fact $F$ is a compact complex torus (see Lemma \ref{torus}). 
Since $F$ is a torus, by means of integration over this compact fiber \cite{Bla56}    
we can push the K\"{a}hler metric down to conclude $G/J$ is K\"{a}hler. 
\end{itemize}

Now since $G/J$ is K\"{a}hler, any S-orbit in $G/J$ is K\"{a}hler 
and, as a consequence,  has algebraic isotropy $S\cap J$ (Theorem \ref{BO})
The algebraic subgroups of a Borel subgroup $B$ in $S = SL(2,\mathbb C)$ are, up to isomorphism,  
finite subgroups,  $A=diag(\alpha,\alpha^{-1})$,  $N(A)$, a maximal unipotent subgroup $U$ of $B$, 
and $B$ itself.         

\begin{enumerate}
\item $\dim (S \cap J) = 2$. Then $S \cap J = B$, so $d_{S/B}=0$ (see example \ref{subgpBorel} (a)).
By Lemma \ref{flemma} we have $d_{S/S \cap \Gamma}=d_{A_1}+d_{S/B} \le 1+ 0 = 1 < 3$.  

\item $\dim (S \cap J) = 1$. We consider the fibration \ref{semisimplefibration} mentioned above.   
There are two possibilities:    
\subitem (i) $S\cap J\cong \mathbb C^*$. 
Then $S/S\cap J$ is an affine quadric or the complement of a quadric in $\mathbb P_2$.
In both cases $A_2 \cong \mathbb C$.    
Note that $P/J\neq \mathbb C^*$.
So $P/J \cong \mathbb C$ or $\mathbb C^*\times\mathbb C^*$.  
By fibration lemma (Lemma \ref{flemma}) we have $d_F = 0$. Hence $F$ is compact.    
By Lemma \ref{closedness} we see that $d_{A_1} = 0$. 
Thus $A_1$ is a torus.
But this also means $S\cap \Gamma$ is infinite which is in contradiction
with the fact that discrete isotropy group of K\"{a}hler manifolds transitive under a semisimple group must be finite
(see the remark after Theorem \ref{BO} on page \pageref{BO}).
    
\subitem (ii) $S\cap J\cong \mathbb C$. Then $S/S\cap J$ is a finite quotient of $\mathbb C^2 \setminus \{(0,0)\}$ and so
$A_2 \cong \mathbb C^*$. 
Then $P/J\cong\mathbb C$, $\mathbb C^*\times\mathbb C^*$, or $\mathbb C^*$.
For the first two cases $F$ is compact and we get the same contradiction as in (i).
If $P/J\cong\mathbb C^*$, then $d_F = 1$ by fibration lemma.   
Hence $d_{A_1}\leq 1$ and 
$A_1 \cong \mathbb C^*$, or a torus. In both cases $S\cap\Gamma$ is infinite which is in contradiction 
with the fact that $S\cap\Gamma$ is finite, as explained in the previous case.   
    
\item $\dim (S \cap J) = 0$ and then $S \cap J$ is finite.   
In this case $\dim (S/S \cap J) = \dim S - \dim (S \cap J) = 3$, 
and $\dim G/J = 3$. It then follows that $S/S \cap J$ is open in $G/J$. 
Note that the only case that $\dim G/J=3$ is when it is 
$\mathbb C^*\times\mathbb C^*$ bundle over $S/P$, i.e., $d_{G/J}=2$. 
Lemma \ref{Sorbitclosed} says this S-orbit is also closed. 
Thus, $G/J=S/S\cap J$ which means $d_{S/S \cap J}=2$. 
Since $d_{A_1}\leq d_F = 0$, we see that $d_{S/S\cap \Gamma}<3$, which is a contradiction.  
\end{enumerate}

We conclude that the assumption that there is a proper parabolic subgroup of $G$   
which contains $\Gamma$ yields a contradiction and the lemma follows.   
\end{proof}

We now combine the above to prove the main result of this section, 
namely $G$ is solvable in our setting.

\begin{proposition}\label{solvmanifold}   
Let $G$ be a connected complex Lie group and $\Gamma$ a discrete subgroup of $G$ 
such that $X=G/\Gamma$ is a homogeneous K\"{a}hler manifold.   
If $d_X \leq 2$, then $G$ is solvable. 
\end{proposition}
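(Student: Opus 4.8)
The plan is to assume the conclusion fails and derive a contradiction with $d_X \le 2$. Writing the Levi--Malcev decomposition $G = S \ltimes R$ (Theorem \ref{Levimalcev}), solvability of $G$ is exactly the assertion $S = \{e\}$, so I would assume $S \neq \{e\}$ throughout and aim for a contradiction. First I would reduce to the case that $G$ is simply connected, by passing to the universal covering group and replacing $\Gamma$ by its preimage, which is still discrete and changes neither $X$ nor the solvability question.

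The first structural input concerns the semisimple directions. Since $X$ is K\"ahler, Lemma \ref{Sorbitclosed} shows that every $S$--orbit $S/(S\cap\Gamma)$ is closed in $X$, and because such an orbit is itself K\"ahler its isotropy must be algebraic by Theorem \ref{BO}; a discrete algebraic subgroup is finite, so $S\cap\Gamma$ is finite. Lemma \ref{dthree} then gives $d_{S/(S\cap\Gamma)} \ge 3$. The entire argument is organized around this lower bound of $3$, carried by the semisimple part, which must be made to collide with the hypothesis $d_X \le 2$.

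To land in the hypotheses of Lemma \ref{dlem2}, which already yields solvability once $\mathscr{O}(X)=\mathbb{C}$ and $\Gamma$ avoids every proper parabolic, I would arrange those two conditions in turn. If $\mathscr{O}(X)\neq\mathbb{C}$ I would pass to the holomorphic reduction $X \to G/J$, whose base is holomorphically separable and whose fiber $J/\Gamma$ satisfies $\mathscr{O}(J/\Gamma)=\mathbb{C}$, and argue that the semisimple part is detected on the fiber, so that the solvability question descends to a space with no nonconstant holomorphic functions. Granting $\mathscr{O}(X)=\mathbb{C}$, I would split on whether $\Gamma$ lies in a proper parabolic subgroup: if it does not, Lemma \ref{dlem2} applies directly and forces $S=\{e\}$, the desired contradiction.

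The remaining and most delicate case is when $\Gamma$ is contained in a proper parabolic $P = \tilde{Q}\ltimes R$, with $\tilde{Q}$ the preimage of a proper parabolic $Q\subset S$. Here I would fiber $G/\Gamma \to G/P = S/Q$ over the flag manifold $S/Q$, which is compact and simply connected, so that by the Fibration Lemma \ref{flemma} the bundle is orientable and $d_X = d_{P/\Gamma}$; the fiber $P/\Gamma$ is again a K\"ahler homogeneous space with discrete isotropy and $d_{P/\Gamma}\le 2$, but the semisimple part of $P$ has strictly smaller dimension than $S$. Running an induction on the dimension of the semisimple part reduces the parabolic case to the case where the parabolic is as small as possible; the model for the contradiction is Lemma \ref{StimesR}, which for $G = SL(2,\mathbb{C})\ltimes R$ flatly forbids $\Gamma$ from lying in any proper parabolic, with the low--dimensional radical endpoints absorbed by Theorem \ref{AG942}. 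The hard part will be making this last reduction uniform: the holomorphic reduction and the flag--manifold fibrations involve subgroups $J$ and $P$ that need not be normal, the base of the holomorphic reduction need not be Stein once a semisimple factor is present, and one must track the invariant $d$ carefully through these possibly non--principal fibrations, using Lemma \ref{flemma} together with the bound $d_{S/(S\cap\Gamma)}\ge 3$ from Lemma \ref{dthree} to close each branch.
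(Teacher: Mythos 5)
Your treatment of the parabolic case and of the branch $\mathscr{O}(X)=\mathbb{C}$ essentially reproduces the paper's argument: the paper also runs an induction (on $\dim G$, applied to a \emph{maximal} proper parabolic $P\supseteq\Gamma$, which forces $P$ solvable, hence $S\cong SL(2,\mathbb{C})$ and $P=B\ltimes R$, contradicting Lemma \ref{StimesR}), and then invokes Lemma \ref{dlem2} when there are no non-constant holomorphic functions. One small point of phrasing: you want the parabolic containing $\Gamma$ to be as \emph{large} as possible, not ``as small as possible,'' since it is the solvability of a \emph{maximal} proper parabolic that pins down $S\cong SL(2,\mathbb{C})$; if you only know that some Borel-type $P=B\ltimes R$ contains $\Gamma$ with $S$ an arbitrary semisimple group, Lemma \ref{StimesR} does not apply.

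The genuine gap is your reduction of the case $\mathscr{O}(X)\neq\mathbb{C}$ to the case $\mathscr{O}(X)=\mathbb{C}$. You assert that the fiber $J/\Gamma$ of the holomorphic reduction satisfies $\mathscr{O}(J/\Gamma)=\mathbb{C}$ and that ``the semisimple part is detected on the fiber.'' Both claims fail for non-solvable $G$, which is exactly the situation you are in before solvability is proved. The paper's own example in \S\ref{hreduction} ($G=SL(2,\mathbb{C})$, $\Gamma\cong\mathbb{Z}$ unipotent) has holomorphic reduction with fiber $\mathbb{C}^*$, which is Stein; the Cousin-fiber statement (Theorem \ref{OeRi88}) is a theorem about solvmanifolds and cannot be cited here. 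Worse, the semisimple part can act entirely along the \emph{base}: with $d_X\le 2$ and $\mathscr{O}(X)\neq\mathbb{C}$, Theorem \ref{AG941} allows $G/J$ to be an affine cone minus its vertex, the affine quadric $Q_2$, ${\mathbb P}_2\setminus Q$, or a $\mathbb{C}^*$-bundle over a cone minus its vertex --- all homogeneous under semisimple groups --- so no conclusion about $G$ descends from the fiber alone. The paper closes these cases by first establishing, \emph{before} splitting on $\mathscr{O}(X)$, that $\Gamma$ lies in no proper parabolic (so the cone, quadric, and ${\mathbb P}_2\setminus Q$ bases, which would force $\Gamma\subset J\subset P$, are impossible), by invoking Lemma \ref{tbdloverC} when the base is $\mathbb{C}$ or $(\mathbb{C}^*)^2$ (torus fiber), and by applying the induction hypothesis to $J$ when $G/J\cong\mathbb{C}^*$ with $d_{J/\Gamma}=1$. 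In your architecture the parabolic non-containment is proved only inside the $\mathscr{O}(X)=\mathbb{C}$ branch, so it is unavailable where it is most needed; moreover your induction on the dimension of the semisimple part gives you nothing when passing to $J$ in the $\mathbb{C}^*$-base case, whereas induction on $\dim G$ does. To repair the proof you should reorder the argument as the paper does and replace the ``descend to the fiber'' step by the case analysis of Theorem \ref{AG941}.
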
    

\begin{proof}  
First suppose 
$X=S/\Gamma$ is a K\"{a}hler manifold, where $S$ is a semisimple complex Lie group.     
By Theorem \ref{BO} we see that $\Gamma$ is algebraic and so finite.
But then $d_X\geq 3$ by lemma \ref{dthree}. 
This is a contradiction to the assumption $d_X \leq 2$.
We conclude that $G$ cannot be semisimple and so    
must have a positive dimensional radical, i.e., $\dim R > 0$.

\medskip
We prove that $G$ is solvable by induction on the dimension of $G$.
Clearly, if  $\dim_{\mathbb C} G < 3$, then the group $G$ is solvable.    
So we assume that the result holds for any connected complex Lie group whose dimension is strictly
less than $n$ and assume $G$ is a complex Lie group with $\dim_{\mathbb C}G = n$.

\medskip 
We first claim that $\Gamma$ is not contained in a proper parabolic subgroup of $G$.    
In order to prove this we assume that $\Gamma$ 
is contained in a proper parabolic subgroup of $G$ and derive a contradiction.
Let $P$ be a maximal such subgroup.  
Since $P/\Gamma$ is K\"{a}hler $d_{P/\Gamma}\leq 2$, and $\dim P/\Gamma < n$,  
the induction hypothesis implies $P$ is solvable.      

\medskip   
Recall that a maximal proper parabolic subgroup of a complex semisimple Lie group $S$ is solvable if and only if it is 
isomorphic to the Borel subgroup $B$ in $S \cong SL(2,\mathbb C)$.  
Then $P=B\ltimes R \subseteq G = S\ltimes R$. 
We are now in the situation of lemma \ref{StimesR} which gives us the desired contradiction.
We conclude that $\Gamma$ is not contained in a proper parabolic subgroup of $G$.    

\medskip  
We now have to consider two cases as follows.        
First assume that $X$ has no non-constant holomorphic functions, i.e., $\mathscr{O}(X)=\mathbb C$.
Then lemma \ref{dlem2} shows that $G$ is solvable.  

\medskip  
Next assume $X$ has non-constant holomorphic functions, i.e., $\mathscr{O}(X)\neq\mathbb C$.  
Under these assumptions the classification given in the Main Theorem in \cite{AG94} 
(see also Theorem \ref{AG941} on page \pageref{AG941}) applies and 
the base $G/J$ of the holomorphic reduction 
\[
     G/\Gamma \xrightarrow{J/\Gamma}  G/J
\]
is one of the following:    
\begin{enumerate}  
\item an affine cone minus its vertex, 
\item the affine line $\mathbb C$,
\item the affine quadric $Q_2$,
\item ${\mathbb P}_2 \setminus Q$ where $Q$ is a quadric curve, or
\item a homogeneous holomorphic $\mathbb C^*$-bundle over an affine cone with its 
vertex removed.
\end{enumerate}   
Note that in case 2 the fiber is a torus. By Lemma \ref{tbdloverC}, $G$ is solvable.

For 3 $\&$ 4 we get fibrations
\[
G/\Gamma \to G/J \to G/P \cong \mathbb P_1
\]
with $P$ a proper parabolic subgroup of $G$.
Now $\Gamma \subseteq P$ gives a contradiction.

For 1 $\&$ 5 first note that a cone minus its vertex is a $\mathbb C^*$ bundle 
over a flag manifold $G/P$ (see section \ref{attemp}),
where $P$ is a parabolic subgroup in $G$.
In both cases we get the fibration
\[
G/\Gamma \xrightarrow{J/\Gamma} G/J \longrightarrow G/P.
\]
We cannot have 
$G\neq P$, i.e., $P$ a proper parabolic subgroup, because then $\Gamma \subset P \subset G$ 
which is a contradiction, as noted above.      
If $G = P$, i.e., if the base of the second fibration is a point, then 
$G/J$ or a 2:1 covering of it is biholomorphic to $\mathbb C^*$ or $(\mathbb C^*)^2$.
Note that if $G/J$ or its 2:1 covering is  $(\mathbb C^*)^2$, the fiber $J/\Gamma$ is a torus.
By Lemma \ref{tbdloverC} it follows that $G$ is solvable.   
If $G/J = \mathbb C^*$, then the fiber $J/\Gamma$ is K\"{a}hler and 
there are two separate cases to consider.   
In 5 this fiber will be compact and we have a torus bundle over $\mathbb C^*\times\mathbb C^*$.  
The result then follows from Lemma \ref{tbdloverC}.  
In 1 if $d_X =2$, then $d_{J/\Gamma} = 1$ by the fibration lemma.  
Since $J$ has dimension strictly less than the dimension of $G$,  
it follows by the induction hypothesis that $J$ is solvable.   
But then $G$ is solvable too, since $G/J = \mathbb C^*$,        
so completing the proof.      
\end{proof}   

\section{Triviality of torus bundle over $\mathbb C^*\times\mathbb C^*$}\label{Tstars}

In subsection \ref{non-compact} we consider a discrete subgroup $\Gamma$ of the 3--dimensional 
Heisenberg group $G$ such that the holomorphic reduction of $G/\Gamma$ is a non--trivial   
holomorphic torus bundle over $\mathbb C^*\times\mathbb C^*$.    
The reason the bundle is not even homeomorphic to a product $T \times (\mathbb C^*)^2$ is  because 
the fundamental group of the latter is Abelian, while the fundamental group $\pi_1(G/\Gamma) \cong \Gamma$ 
is nilpotent and not Abelian.    
Note that this example is not K\"{a}hler.   
In sharp contrast to this we consider in this section 
homogeneous K\"{a}hler solvmanifolds with discrete isotropy which fiber       
as torus bundles over $\mathbb C^*\times\mathbb C^*$ and  prove that 
such $X$ are biholomorphic to the product of the torus and  $(\mathbb C^*)^2$.  
The structure of the holomorphic reductions of K\"{a}hler solvmanifolds 
has been analyzed in \cite{OR88} and \cite{GO08}, see also Theorem \ref{OeRi88} and Theorem \ref{GO08}.   
   
\begin{proposition}   \label{torbdle}  
Suppose $G$ is a connected, 
simply connected solvable complex Lie group and $\Gamma$ is a discrete subgroup 
such that $G/\Gamma$ is K\"{a}hler and has holomorphic reduction 
\[  
    X \; = \;  G/\Gamma \; \stackrel{T}{\longrightarrow} \; G/J 
           \; \cong \; \mathbb C^*\times\mathbb C^* \; = \; Y  ,     
\]  
where $T = J/\Gamma$ is a compact complex torus.   
Then a finite covering of $G/\Gamma$ is biholomorphic to    
the product $T \times \mathbb C^*\times\mathbb C^*$.    
\end{proposition}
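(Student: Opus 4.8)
The plan is to reduce to a principal bundle with connected structure group, invoke the Oka principle to turn the problem into a purely topological one, and then use the Kähler hypothesis to kill the single remaining invariant. First I would apply Theorem \ref{GO08} to the holomorphic reduction $G/\Gamma \to G/J$. After replacing $X$ by the finite covering $\hat X = G/(\hat J \cap \Gamma)$ produced there, the map $\hat X \to G/\hat J$ is a holomorphic principal bundle with structure group $J^0/(J^0\cap\Gamma)$. Since $J/\Gamma = T$ is connected we have $J^0\cdot\Gamma = J$, hence $J^0/(J^0\cap\Gamma)\cong J/\Gamma = T$; as $J^0\to T$ is then a covering homomorphism onto the abelian group $T$, the group $J^0$ is a connected abelian complex Lie group, so $J^0\cong V:=\mathbb C^n$ and $\Lambda := J^0\cap\Gamma$ is a full lattice in $V$ with $T=V/\Lambda$. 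Moreover $G/\hat J$ is a finite covering of $(\mathbb C^*)^2$, hence again biholomorphic to $(\mathbb C^*)^2$ and in particular Stein. Thus, after this finite covering, I have reduced to showing that a holomorphic principal $T$-bundle $p\colon X\to Y=(\mathbb C^*)^2$ with $X$ Kähler is holomorphically trivial.

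Next I would apply the Grauert--Oka principle (Theorem \ref{GOka}): since $Y$ is Stein and $T$ is a complex Lie group, the bundle is holomorphically trivial as soon as it is continuously trivial, so it suffices to compute its classifying invariant. From the exponential sequence $0\to\Lambda\to\mathscr{O}_Y(V)\to\mathscr{O}_Y(T)\to 0$ together with Cartan's Theorem B (which annihilates $H^{\ge 1}(Y,\mathscr{O}_Y(V))$ on the Stein manifold $Y$), the bundle is classified by a class $c\in H^1(Y,\mathscr{O}_Y(T))\cong H^2(Y,\Lambda)$, and the continuous version gives the same group, so holomorphic triviality is equivalent to $c=0$. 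Since $Y=(\mathbb C^*)^2$ is homotopy equivalent to the real $2$-torus, $H^2(Y,\mathbb Z)\cong\mathbb Z$ is free and $H^2(Y,\Lambda)\cong\Lambda$ is torsion-free. This is crucial: a further finite covering of the base only multiplies $c$, so it can never make $c$ vanish, and the entire weight of the argument must fall on proving that $c$ is already zero.

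The hard part will be to prove $c=0$, and here I would use the nilmanifold criterion of Oeljeklaus--Richthofer \cite{OR87} already invoked in \ref{non-compact}. Reading $c$ off at the level of fundamental groups, the fibration yields a central extension $0\to\Lambda\to\Gamma\to\mathbb Z^2\to 0$ (the monodromy is trivial because the structure group $T$ is connected), so $\Gamma$ is $2$-step nilpotent and $c$ corresponds to the single commutator $[\tilde a,\tilde b]\in\Lambda$ of lifts of the generators of $\mathbb Z^2$; thus $c=0$ if and only if $\Gamma$ is abelian. By Theorem \ref{OeRi88} $\Gamma$ is virtually nilpotent, and combined with the central structure above one may take $G$ nilpotent, so $X$ is a complex nilmanifold admitting, by Malcev \cite{Mal49}, a real form $G_{\mathbb R}\supset\Gamma$ with $G_{\mathbb R}/\Gamma$ compact. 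Writing $\mathfrak m:=\mathfrak g_{\mathbb R}\cap i\mathfrak g_{\mathbb R}$, the Kähler condition forces $\mathfrak m\cap\mathfrak g_{\mathbb R}'=0$ by \cite{OR87}. But the commutators land in $\Lambda$, whose real span is the fiber tangent space $V$; as $V$ is a complex subspace it lies in $\mathfrak m$, so $\mathfrak g_{\mathbb R}'\subseteq V\subseteq\mathfrak m$ and therefore $\mathfrak g_{\mathbb R}'=\mathfrak m\cap\mathfrak g_{\mathbb R}'=0$. Hence $\Gamma$ is abelian, $c=0$, and the bundle is topologically and thus holomorphically trivial, giving the biholomorphism $X\cong T\times(\mathbb C^*)^2$ of the finite covering.

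I expect the genuine obstacle to be the justification that one may reduce to $G$ nilpotent and that the finite cover fits the precise hypotheses of the \cite{OR87} criterion, i.e.\ that its complex structure is the one controlled by $\mathfrak m$; this is exactly the delicate point that makes $c$ a topological invariant which is nevertheless forced to vanish. As a backup for the crux that sidesteps the nilmanifold reduction, I would argue that $c$ equals, up to sign, the transgression $d_2\colon H^1(T,\mathbb R)\to H^2(Y,\mathbb R)$ in the Leray spectral sequence of $p$, and use fiber integration over the compact torus fibers (Blanchard \cite{Bla56}, as in the proof of Lemma \ref{StimesR}) to show this transgression vanishes when $X$ is Kähler; since $H^2(Y,\Lambda)$ is torsion-free, vanishing of the real class already forces $c=0$.
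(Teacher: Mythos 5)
Your opening reductions are sound: passing to the finite cover from Theorem \ref{GO08} to get a principal bundle, checking that the structure group is $J^0/(J^0\cap\Gamma)\cong T$ with $J^0\cong\mathbb C^n$ abelian, invoking Grauert--Oka (Theorem \ref{GOka}) over the Stein base, classifying by $c\in H^1(Y,\mathscr{O}_Y(T))\cong H^2(Y,\Lambda)\cong\Lambda$ via the exponential sequence, identifying $c$ with the class of the central extension $0\to\Lambda\to\Gamma\to\mathbb Z^2\to 0$, and the observation that torsion-freeness means no finite covering can kill $c$ --- all of this is correct and well organized. The genuine gap is exactly where you suspected it: the step ``one may take $G$ nilpotent.'' The group $\Gamma$ is indeed $2$-step nilpotent, but the complex structure on $X=G/\Gamma$ is the one induced by the \emph{given} solvable group $G$, and nilpotency of $\Gamma$ as an abstract group provides no transitive holomorphic action of a nilpotent complex Lie group on $X$: the Malcev hull of $\Gamma$ furnished by \cite{Mal49} is an abstract real nilpotent group, not a complex subgroup of $G$, so the criterion $\mathfrak m\cap\mathfrak g_{\mathbb R}'=0$ of \cite{OR87} cannot be applied to $X$ without first producing the complex-nilmanifold (CRS) presentation you assume. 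This is not a removable technicality; it is where the paper's proof lives. The paper splits into the cases $J^0$ normal and $J^0$ not normal: when $G/J^0$ is isomorphic to the Borel group of $SL(2,\mathbb C)$ --- so $G$ is genuinely non-nilpotent --- triviality is proved by an explicit structure-constant computation showing $\gamma$ acts trivially on the nilradical; when $J^0$ is not normal, the paper needs the normalizer fibration, the Hochschild--Mostow hull, the nilpotency of the Zariski closure $G_a(\Gamma)$ from \cite{GO08}, the CRS splitting theorem of \cite{GO11}, and the two-ends result of \cite{GOR89}. Your main line covers only the essentially abelian/nilpotent configuration.

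Your one-sentence backup, however, is the right idea and, written out, closes the gap by a route genuinely different from the paper's. In the Leray spectral sequence of $p$ (with constant coefficients, the monodromy being trivial because the structure group $T$ is connected), the class $[\omega|_T]\in E_2^{0,2}=H^2(T,\mathbb R)$ of the restricted K\"ahler form lies in the image of the restriction $H^2(X,\mathbb R)\to H^2(T,\mathbb R)$, hence is a permanent cocycle, so $d_2[\omega|_T]=0$; writing $d_2\alpha_i=c_i\,\eta$ on generators $\alpha_i$ of $H^1(T,\mathbb R)$ with $\eta$ generating $H^2(Y,\mathbb R)\cong\mathbb R$, multiplicativity of $d_2$ turns this into $Ac=0$, where $A$ is the matrix of $[\omega|_T]$, which is nondegenerate because $\int_T\omega^n>0$; hence $c=0$, and torsion-freeness of $H^2(Y,\Lambda)$ upgrades this to integral vanishing. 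Note that the working mechanism here is multiplicativity of the spectral sequence plus nondegeneracy of the fiber class --- \emph{not} Blanchard fiber integration, which only produces a K\"ahler form on the already-Stein base, as in the proof of Lemma \ref{StimesR}. With that substitution your backup yields a short, uniform proof of the crux with no case distinction on $J^0$, no structure constants, and no CRS theory, at the price of cohomological machinery the paper avoids by staying Lie-theoretic. As submitted, though --- the main line resting on the unjustified nilpotent reduction and the backup left as an assertion --- the proposal is not yet a complete proof.
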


\begin{proof} 
There are two cases to consider, namely either $J^0$ is normal in $G$ or not. 
If $J^0$ is normal in $G$, then the group $G/J^0$ is a simply connected 2--dimensional complex Lie group.  
Up to isomorphism there are two possibilities, the Abelian case and the solvable case.  

\medskip   
First assume that the group $G/J^0$ is Abelian and let $\alpha:G \to G/J^0$ be the projection homomorphism.        
Since $Y$ contains the real subgroup $\mathbb S^1 \times \mathbb S^1$
the pullback $G_0 := \alpha^{-1} (\mathbb S^1 \times \mathbb S^1)$ is a real subgroup of $G$.
We then have the fibration
\begin{equation*}
\begin{array}{ccc}
      G/\Gamma \; &\;  \stackrel{T}{\longrightarrow}   &  \mathbb C^*\times\mathbb C^* \\
       \cup       &                        & \cup               \\
 G_0 /\Gamma\ \; & \; \stackrel{T}{\longrightarrow}  & \mathbb S^1 \times \mathbb S^1
\end{array}
\end{equation*}
with compact fibers $T$. Thus $G_0 / \Gamma$ is compact.
The triple $(G, G_0, \Gamma)$ is a K\"{a}hler CRS manifold (see subsection \ref{CRSmanifolds} for definition).
Thus the bundle splits as a product (Theorem 6.14, 2(iii) page 189, \cite{GO11}).

\medskip  
Next we assume that $G/J^0$ is isomorphic to $B$, the Borel subgroup of $SL(2,\mathbb C)$, and   
we let $\mathfrak b$ denote its Lie algebra.     
Let $\mathfrak g$ be the Lie algebra of $G$ and $\mathfrak{g/j}$ be the Lie algebra of $G/J^0$.   
Let $\pi : G\to G / J^0$ be the quotient map and $d\pi : \mathfrak g \to  \mathfrak {b:=g/j}$ its differential.

\medskip
By definition the nilradical $\mathfrak n$ is the largest nilpotent ideal in $\mathfrak g$.
Since the sum of two nilpotent ideal is nilpotent (Proposition 6 on p.25 in \cite{Jac62}),     
$\mathfrak n$ contains every nilpotent ideal in $\mathfrak g$.
In particular, $\mathfrak g'$ and $\mathfrak j$ are in $\mathfrak n$.
Also $d\pi(\mathfrak n)$ is a nilpotent ideal in $\mathfrak{g/j}= \mathfrak b$.
Hence, it is contained in the nilradical $\mathfrak n_b$ of $\mathfrak b$.
Thus, we have $\mathfrak j\subseteq \mathfrak n \subseteq d\mathfrak \pi^{-1}(\mathfrak n_b)$.
Note that $\dim\mathfrak j = n-2$ and $\dim d\pi^{-1}(\mathfrak n_b)=n-1$.
But $\mathfrak n\neq\mathfrak j$ since $\mathfrak{g/j}$ is not Abelian (the quotient by the nilradical is always Abelian).
So $\mathfrak n = d\pi^{-1}(\mathfrak n_b)$.   
Let $N$ be its Lie group. 
Define $\Gamma_N := \Gamma\cap N$.   
Since $N$ is nilpotent, the exponential map $\exp: \mathfrak n \to N$ is surjective. 
Let $\gamma\in\Gamma_N$.    
Then there is $y\in\mathfrak n$ such that $\gamma=\exp (y)$.
Let $\mathfrak u = <y>_{\mathbb C}$ be the Lie algebra generated by $y$.   
Let $U$ be its Lie group. 
Then $\mathfrak n = \mathfrak u \oplus \mathfrak j$ and so $N=U\times J^0$.
Note that $[y,\mathfrak j]=0$, since $\Gamma$ centralizes $J^0$, see the proof of Theorem 1 in \cite{GO08}.         
So $N$ is Abelian.
Define $\Gamma_U : =\Gamma\cap U \cong\mathbb Z$ and $\Gamma_J := \Gamma \cap J^0$ 
which is a full lattice in $J^0$.  
It is immediate that    
\[   
         N/{\Gamma_N} = U/{\Gamma_U} \times J^{0} / {\Gamma_J}.   
\]              

\medskip
Since $\Gamma/\Gamma_N \simeq \mathbb Z$, we choose $\gamma\in\Gamma$ so that 
$\gamma$ projects to a generator of $\Gamma/\Gamma_N$.  
Pick any $w\in \mathfrak{g} \setminus \mathfrak n$ and define $A:=\{\exp(sw):s\in\mathbb C\}$.    
Note that $A$ is complementary to $N$ in $G$, and thus $G = A \ltimes N$.
Let $\gamma_A \in A$ and $\gamma_N \in N$ so that 
$\gamma = \gamma_A . \gamma_N$.
Then since $\gamma$ and $ \gamma_N$ both centralize $J^0$, 
we see that $\gamma_A$ centralizes $J^0$. 
Since $\gamma_A\in A$,  
we have $\exp(h) = \gamma_A$ with $h = sw$ for some $s\in\mathbb C$.   
Note that    
\begin{equation}  \label{central}  
                     [h, \mathfrak j] \; = \; 0 , 
\end{equation}  
a fact which we use later.

\medskip
Since $\mathfrak a + \mathfrak u$ is isomorphic to 
$\mathfrak b = \mathfrak {g/j}$ as a vector space,   
we may take $e_+ \in \mathfrak u$ so that 
\[   
     [ d\pi (h) , d\pi ( e_+ ) ] \;  = \; 2 d\pi (e_+ )  .
\]   
Let $\{e_1 , \cdots, e_{n-2}\}$ be a basis for $\mathfrak j$. 
It follows that there are structure constants $a_i$ 
so that 
\[ 
         [h,e_+ ] \; = \; 2e_{+} \; +  \; \sum_{i=1}^{n-2} a_i e_i  .      
\]  
The remaining structure constants are 0, due to the fact $[h, \mathfrak j]=0$, see (\ref{central}).     
Conversely, any choice of the $a_i$ defines a solvable Lie algebra $\mathfrak g$ 
and the corresponding simply-connected group $G=A\ltimes N$.

\medskip
We compute the action of $\gamma_A\in A$ on $N$ by conjugation.   
For this note that the adjoint representation restricted to $\mathfrak n$, i.e., 
the map ${\rm ad}_h: \mathfrak n \to \mathfrak n$, is expressed by the matrix
\[   
   M \; := \; [{\rm ad}_h] \; = \;  
     \begin{pmatrix}
         2       & 0 & \cdots & 0 \\
         a_1     & 0 & \cdots & 0 \\
        \vdots  & \vdots  & \ddots & \vdots  \\
        a_{n-2} &  0      & \cdots & 0
      \end{pmatrix}.
 \]        
So the action of $A$ on $N$ is through the one parameter group of 
linear transformations $t\to e^{tM}$ for $t\in \mathbb C$.
For $k\geq 1$
$$
              (tM)^k  \; = \; \frac{1}{2} (2t)^k M,
$$
and it follows that     
\[   
      e^{tM} \; = \; \frac{1}{2}(e^{2t}-1)M \; + \; {\rm Id} .   
\]   
Now the projection of the element $\gamma_A$ 
acts trivially on the base $Y = G/J$, so $t=\pi i k$ where $k\in\mathbb Z$. 
Hence $\gamma_A$ acts trivially on U.
Also $\gamma_N$ acts trivially on $N$, since $N$ is Abelian.   
Thus $\gamma$ acts trivially on $N$ and as a consequence, 
although $G$ is a non-Abelian solvable group
the manifold $X=G/\Gamma$ is just the quotient of $\mathbb C^n$ by a discrete additive subgroup of rank $2n-2$. 
Its holomorphic reduction is the original torus bundle which, since we are now dealing with the Abelian case, is trivial.

\medskip  
Assume $J^0$ is not normal in $G$ and set $N := N_G(J^0)$. 
Let 
\[
    G/J \; \xrightarrow{N/J} \; G/N
\]
be the normalizer fibration.
Since the base $G/N$ of the normalizer fibration is an orbit 
in some projective space 
Lie's flag theorem applies (see section \ref{normalizerfibration})   
and $G/N$ is holomorphically separable and thus Stein \cite{Sno85}.         
Since we also have $d_{G/N}\leq 2$ we see that $G/N \cong \mathbb C$, $\mathbb C^*$ or $\mathbb C^*\times \mathbb C^*$.      
Assume $G/N \cong \mathbb C$.   
Since $d_X \leq 2$ 
the fibration lemma (Lemma \ref{flemma}) applies and $d_{N/J} = 0$, i.e., $N/J$ is biholomorphic to a torus $T$. 
By Grauert theorem $G/J \cong T\times \mathbb C$.
However, we assume that $G/J = \mathbb C^*\times\mathbb C^*$ and 
this gives us a contradiction.
Assume $G/N \cong \mathbb C^* \times \mathbb C^*$.
By Chevalley's theorem (Theorem 13, page 173, \cite{Che51}) the commutator group $G'$ acts algebraically.    
Hence the $G'$--orbits are closed and one gets the commutator fibration 
\[
        G/N \xrightarrow{\mathbb C} G/G'.N.
\]
Since $G$ is solvable, it follows that $G'$ is unipotent and the $G'$--orbits are cells, i.e.,  $G'.N/N \cong \mathbb C$.
By the Fibration Lemma the base of the commutator fibration is a torus. 
But it is proved in \cite{HO81}  
that the base of a commutator fibration is always Stein which is a contradiction.
Now the only case remaining is when $G/N \cong \mathbb C^*$. 
By the fibration lemma (Lemma \ref{flemma}) we get $d_{N/J} = 1$ and hence $N/J = \mathbb C^*$ 

\medskip    

Since $G$ is simply connected,       
$G$ admits a Hochschild-Mostow hull (\cite{HM64}), i.e., there exists a solvable linear algebraic group 
\[
       G_a \; = \; (\mathbb C^*)^k \ltimes G
\]
that contains $G$ as a Zariski dense, topologically closed, normal complex subgroup.   
By passing to a subgroup of finite index we may, without loss of generality,     
assume the Zariski closure $G_a (\Gamma)$ of $\Gamma$ in $G_a$ is connected.   
Gilligan and Oeljeklaus \cite{GO08} proved that $G_a (\Gamma) \supseteq J^0$, 
and that $G_a (\Gamma)$ is nilpotent.   
Let $\pi: \widehat{G_a} (\Gamma) \to G_a (\Gamma)$ be the universal covering and     
set $\widehat{\Gamma} := \pi^{-1}(\Gamma)$.       
Since $\widehat{G_a} (\Gamma)$ is a simply connected, nilpotent, complex Lie group, the exponential map from the 
Lie algebra ${\mathfrak g}_a (\Gamma)$ to $\widehat{G_a} (\Gamma)$ is bijective.   
For any subset of $\widehat{G_a} (\Gamma)$ and, in particular for the subgroup       
$\widehat{\Gamma}$, the smallest closed, connected, 
complex (resp. real) subgroup $\widehat{G_1}$ (resp. $\widehat{G_0}$) of $\widehat{G_a} (\Gamma)$
containing $\widehat{\Gamma}$ is well-defined.   
By construction $\widehat{G_0}/\widehat{\Gamma}$ is compact  -- 
e.g., see Theorem 2.1 (2) $\Longleftrightarrow$ (4) in Raghunathan (\cite{Rag08}).  
Set $G_1 :=\pi(\widehat{G_1})$ and $G_0 :=\pi(\widehat{G_0})$.   
We consider the CRS manifold $(G_1 , G_0 , \Gamma)$, see \S 2.4.3.   
Note that the homogeneous CR--manifold $G_0/\Gamma$ 
fibers as a $T$--bundle over $S^1\times S^1$.  
In order to understand the complex structure on the base $S^1\times S^1$   
of this fibration consider the following diagram     
\begin{equation*}
  \begin{array}{rcccc}  
  \widehat{G_0}/ \widehat{\Gamma} & \subset &  \widehat{G_1} / \widehat{\Gamma}  &   
  \subseteq  & \widehat{G_a}(\Gamma) / \widehat{\Gamma}  \\  
  || & & ||  &  &  ||   \\ 
   G_0/\Gamma & \subset &  G_1 / \Gamma  &   \subseteq  & G_a(\Gamma) /\Gamma \\  
   T \downarrow  & & T \downarrow & & \downarrow T \\   
   S^1 \times S^1 \; = \; G_0 / (G_0 \cap J^0 . \Gamma) & \subset & G_1 / J^0.\Gamma  & \subseteq  & G_a / J^0.\Gamma
 \end{array}
\end{equation*}   
As observed in the proof of Part I of Theorem 1 in \cite{GO08} the manifold $G_a / J^0.\Gamma$   
is a holomorphically separable solvmanifold and thus is Stein \cite{HO86}.  
So $G_1 / J^0 . \Gamma$ is also Stein and thus 
$G_0 / (G_0 \cap J^0 . \Gamma) $ is totally real in $G_1/J^0. \Gamma$.  
The corresponding complex orbit $G_1 / J^0.\Gamma $ is then 
biholomorphic to $\mathbb C^*\times\mathbb C^*$.  
It now follows by Theorem 6.14 in \cite{GO08} that a finite covering of $G_1/\Gamma$ splits as a product 
of a torus with $\mathbb C^*\times\mathbb C^*$ and, in particular, that 
a subgroup of finite index in $\Gamma$ is Abelian.  
 
\medskip    
Now set $A := \{ \ \exp\ t\xi \ | \ t \in\mathbb C \ \}$, where $\xi \in \mathfrak g \setminus \mathfrak n$ 
and $\mathfrak n$ is the Lie algebra of $N^0$.  
Then $G = A \ltimes N^0$ and any $\gamma\in\Gamma$ can be written as $\gamma = \gamma_A . \gamma_N$ 
with $\gamma_A\in A$ and $\gamma_N\in N$.   
The fiber $G/\Gamma \to G/N$ is the $N^0$-orbit of the neutral point and $\Gamma $
acts on it by conjugation.  
Since $N/\Gamma$ is K\"{a}hler and has two ends, it follows by Proposition 1 in 
\cite{GOR89} that (a finite covering of) $N/\Gamma$ is biholomorphic to a product  
of the torus and $\mathbb C^*$.   
(By abuse of language we still call the subgroup having finite index $\Gamma$.)    
Now the bundle $G/\Gamma \to G/N$ is associated to the bundle 
\[  
        \mathbb C \; = \; G/N^0 \; \longrightarrow \; G/N \; = \; \mathbb C^*     
\]  
and thus $G/\Gamma = \mathbb C \times_{\rho} (T \times \mathbb C^*)$,     
where $\rho : N/N^0 \to {\rm Aut } (T \times \mathbb C^*)$ is the adjoint representation. 
Since $\Gamma$ is Abelian, this implies $\gamma_A$ acts trivially on $\Gamma_N := \Gamma \cap N^0$.   
Now suppose $J$ has complex dimension $k$.   
Then $\gamma_A$ is acting as a linear map on $N^0 = \mathbb C \ltimes J^0 = \mathbb C^{k+1}$ and 
commutes with the additive subgroup $\Gamma_N$ that has rank $2k+1$ and spans $N^0$ as a linear space.        
This implies $\gamma_A$ acts trivially on $N^0$ and, as a consequence, the triviality of 
a finite covering of the bundle, as required.    
\end{proof}   


\section{Main theorem}\label{maintheorem}

In the following we classify K\"{a}hler $G/\Gamma$ when $\Gamma$ is discrete and $d_X \le 2$.   
Note that $d_X=0$ means $X$ is compact and this is the classical result of 
Borel--Remmert (Theorem \ref{torus.flag}) and the case $d_X = 1$ corresponds to $X$ having 
more than one end and this was handled in \cite{GOR89}    
(Theorem, p.164); the proof 
will show this equivalence.       

\begin{theorem}  \label{disc}
Let $G$ be a connected complex Lie group and $\Gamma$ a discrete 
subgroup of $G$ such that $X := G/\Gamma$ is K\"{a}hler and $d_X \le 2$.  
Then the group $G$ is solvable and a finite covering of $X$ is a direct product $C \times A$, where $C$ 
is a Cousin group and $A$ is $\{ e \}, \mathbb C^*$, $\mathbb C$, or $(\mathbb C^*)^2$.   
Moreover, $d_X = d_C + d_A$.    
\end{theorem}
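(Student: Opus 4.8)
The plan is to reduce everything to the structure theory of the holomorphic reduction, once solvability is in hand. First I would invoke Proposition~\ref{solvmanifold} to conclude that $G$ is solvable; this is the substantive half of the statement, and all of the argument below takes place in the solvable category. Since the conclusion only concerns a \emph{finite covering} of $X$, I may pass to the universal cover and assume $G$ is simply connected, replacing $\Gamma$ by its preimage.

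Next I would form the holomorphic reduction $X = G/\Gamma \xrightarrow{F} Y = G/J$. Because $X$ is a K\"ahler solvmanifold, Theorem~\ref{OeRi88} shows the base $Y$ is Stein and the fiber $F = J/\Gamma$ is a Cousin group; this $F$ is the candidate for $C$. To pin down $Y$, I would run the commutator fibration $G/J \to G/G'.J$ of \S\ref{commutatorfibration}: the base is an orbit in an abelian affine algebraic group, hence a Stein abelian complex Lie group, so by the classification in \S\ref{cocaclg} it is $(\mathbb{C}^*)^a \times \mathbb{C}^b$, while the fiber $G'.J/J$ is a unipotent orbit, hence a cell $\mathbb{C}^c$. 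Since the structure group $G'$ is a contractible unipotent complex Lie group, the bundle is topologically trivial, so Grauert--Oka (Theorem~\ref{GOka}) makes it holomorphically trivial; thus $Y \cong (\mathbb{C}^*)^a \times \mathbb{C}^m$ with $m = b+c$. (This matches the list produced by Theorem~\ref{AG941} once the non-solvable options $Q_2$ and $\mathbb{P}_2 \setminus Q$ are discarded.)

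The invariant $d_X$ then does the bookkeeping. Using $d_{\mathbb{C}^*} = 1$ and $d_{\mathbb{C}} = 2$, one computes $d_Y = a + 2m$; and since $Y$ is homotopy equivalent to the compact torus $(\mathbb{S}^1)^a$, part~(3) of the Fibration Lemma~\ref{flemma} gives $2 \ge d_X \ge d_F + d_Y \ge a + 2m$. Hence $(a,m) \in \{(0,0),(1,0),(2,0),(0,1)\}$, so $Y$ is one of $\{e\},\ \mathbb{C}^*,\ (\mathbb{C}^*)^2,\ \mathbb{C}$ --- exactly the permitted list for $A$. It then remains to split the bundle $X \to Y$. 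By Theorem~\ref{GO08}, after replacing $\Gamma$ by a finite-index subgroup I may take $X \to Y$ to be a holomorphic principal bundle whose structure group is the connected Cousin group $J^0/(\Gamma \cap J^0)$.

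Finally I would trivialize case by case. If $Y$ is a point, $X = F$ is a Cousin group and $A = \{e\}$. If $Y \cong \mathbb{C}$, the base is contractible, so the principal bundle is topologically trivial and Theorem~\ref{GOka} gives $X \cong C \times \mathbb{C}$ (here $d_Y = 2$ forces $d_F = 0$, so $C$ is a torus). If $Y \cong \mathbb{C}^*$, it is a non-compact Riemann surface and Grauert's Theorem~\ref{Grauert}, applied to the connected complex structure group, forces triviality, giving $X \cong C \times \mathbb{C}^*$. The decisive case is $Y \cong (\mathbb{C}^*)^2$: here $d_Y = 2$ forces $d_F = 0$, so $F$ is a compact torus (Lemma~\ref{torus}), and the torus bundle $X \to (\mathbb{C}^*)^2$ need not be trivial a priori --- the Heisenberg example of \S\ref{non-compact} shows such bundles can fail to be products even topologically. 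This is precisely Proposition~\ref{torbdle}, which yields a finite covering splitting $X \cong T \times (\mathbb{C}^*)^2$. In each of the four cases the result is a genuine product, so part~(1) of Lemma~\ref{flemma} gives $d_X = d_C + d_A$, completing the proof. I expect the genuine obstacle to be this last case, whose resolution (Proposition~\ref{torbdle}) is the technical core; the remaining cases reduce to triviality over contractible or one-dimensional Stein bases for bundles with connected complex structure group.
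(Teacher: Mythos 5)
Your overall architecture is the same as the paper's (Proposition~\ref{solvmanifold} for solvability, holomorphic reduction with Stein base and Cousin fiber via Theorem~\ref{OeRi88}, Theorem~\ref{GO08} to make the bundle principal after a finite cover, Grauert over $\mathbb{C}^*$, Grauert--Oka over $\mathbb{C}$, and Proposition~\ref{torbdle} over $(\mathbb{C}^*)^2$), but the step where you pin down the base $Y$ via the commutator fibration contains a genuine error. In this setting the isotropy $J$ is neither algebraic nor connected, so in $G/J \to G/G'.J$ the fiber is $G'.J/J \cong G'/(G' \cap J)$, and $G' \cap J$ can be a nontrivial \emph{discrete} subgroup: the fiber is then a quotient of a cell (e.g.\ $\mathbb{C}^*$), not a cell. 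Moreover the structure group of this homogeneous bundle is not $G'$ itself but the possibly disconnected group $G'.J/\widetilde{J}$ of Steenrod's theorem, so neither your cell claim nor your topological-triviality claim holds, and Grauert--Oka does not apply. Concretely, your intermediate assertion $Y \cong (\mathbb{C}^*)^a \times \mathbb{C}^m$ is false: take $G = \mathbb{C} \ltimes \mathbb{C}$ with $t \cdot s = e^{\pi i t}s$ and $J = \Gamma = \langle (1,0), (0,1) \rangle$; then $G$ is simply connected solvable, $Y = G/\Gamma$ is the complex Klein bottle (Stein, hence K\"ahler, with $d_Y = 2$, equal to the base of its own holomorphic reduction), and $Y$ is not a product of copies of $\mathbb{C}^*$ and $\mathbb{C}$ since $\pi_1(Y) = \langle a, b \mid aba^{-1} = b^{-1} \rangle$ is non-Abelian. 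Here $G' \cap \Gamma \cong \mathbb{Z}$ is exactly the discrete intersection your argument overlooked, making the commutator fiber $\mathbb{C}^*$ with inversion monodromy. This case is not an artifact: it is precisely the ``covered two-to-one'' possibility $b_2(4)$ in Theorem~\ref{AG941}, which your parenthetical discards incorrectly, and it is the reason the paper's list of Stein bases reads ``$\mathbb{C}$, $\mathbb{C}^*$, $\mathbb{C}^* \times \mathbb{C}^*$, or a complex Klein bottle'' \cite{AG94}.

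The gap is localized and repairable precisely because the theorem only asserts that a \emph{finite covering} of $X$ splits: replace your commutator-fibration classification of $Y$ by the list from \cite{AG94} (as the paper does), and in the Klein bottle case pass to the two-to-one covering --- equivalently, use the finite-index subgroup furnished by Theorem~\ref{GO08} --- to reduce to base $(\mathbb{C}^*)^2$, after which Proposition~\ref{torbdle} applies exactly as in your final case. With that correction, the rest of your proposal (the $d$-bookkeeping $\dim_{\mathbb{C}} Y \le d_Y \le d_X \le 2$ via Lemma~\ref{flemma}, the forced compactness of the fiber when $d_Y = 2$, the case-by-case trivializations, and the product formula $d_X = d_C + d_A$ from part (1) of Lemma~\ref{flemma}) coincides with the paper's proof, and your identification of Proposition~\ref{torbdle} as the technical core is accurate.
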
    

\begin{proof}
By proposition \ref{solvmanifold} we only need to consider the case that $G$ is solvable.
If there is no non-constant holomorphic function on $X$, i.e., 
if $\mathscr{O}(X)\cong\mathbb C$, then $X$ is a Cousin group (Theorem \ref{OeRi88} on page \pageref{OeRi88}).   
The case when $X$ is compact ($d_X =0$) is Theorem \ref{torus} on page \pageref{torus}.   

So we assume that $\mathscr{O}(X)\neq\mathbb C$, i.e., we have non-constant holomorphic functions on $X$.
Let
\begin{displaymath}
    \xymatrix{
          G/\Gamma \ar[r]^{C=J/\Gamma} &  G/J}
\end{displaymath} 
be the holomorphic reduction of $G/\Gamma$.       
It is known that the base is Stein (see the Theorem on p. 58 in \cite{HO86}), 
a finite covering of the bundle is principal (Theorem \ref{GO08} on page \pageref{GO08}),  
and the fiber is biholomorphic to a Cousin group (Theorem \ref{OeRi88} on page \pageref{OeRi88}).            
Since $G/J$ is Stein, by the fibration lemma (Lemma \ref{flemma} on page \pageref{flemma})    
one can see that 
\[   
       \dim_{\mathbb C} G/J \; \leq \; d_{G/J} \; \leq \; d_X\; \leq  \; 2  .     
\]       

Let $d_X=1$.  
Then $d_{G/J}=1$.   
Since $G/J$ is Stein, $G/J$ is biholomorphic to $\mathbb C^*$.
Since Theorem \ref{GO08} on page \pageref{GO08}   
states that a finite cover of a K\"{a}hler solvmanifold has 
a holomorphic reduction that is a Cousin group principal bundle,      
this means up to some finite cover the structure group of the bundle is a connected complex Lie group.
So the Grauert--R\"{o}hrl theorem (Theorem \ref{Grauert}) then states that this finite covering is trivial. 
   
Let $d_X=2$.   
Again by fibration lemma it follows that $G/J\cong \mathbb C$, 
$\mathbb C^*$, $\mathbb C^*\times \mathbb C^*$ or a complex Klein bottle \cite{AG94}.   
The case of $\mathbb C^*$ is handled as above.  
A torus bundle over $\mathbb C$ is trivial by Grauert--Oka principle (Theorem \ref{GOka}).
To be more specific note that since $\mathbb C$ is continuously contractible to a point, 
we conclude that the bundle is a topologically trivial bundle and     
hence is holomorphically trivial by Theorem \ref{GOka}. 
Finally, it is enough to discuss the case $\mathbb C^*\times\mathbb C^*$ since we discussed 
the first two cases already and    
a Klein bottle is a 2-1 cover of $\mathbb C^*\times\mathbb C^*$.       
But this is proposition \ref{torbdle}.    
The proof of the theorem is now complete as we covered all the possibilities.
\end{proof}


\begin{remark}    
We list the possibilities that can occur in the theorem.   
\begin{enumerate} 
\item   Suppose $d_{X} = 0$.   
Then $X$ is a torus.     
This corresponds to $X$ compact.      
\item  Suppose  $d_X = 1$. Then one of the following holds:         
\subitem (i) $X$ is a Cousin group of  hypersurface type     
\subitem (ii) a finite covering of $X$ is $T\times \mathbb C^*$ with $T$ a torus 
\newline   This corresponds to $X$ having  two ends, see \cite{GOR89}.      
\item   For $d_X=2$, then one of the following holds:        
\subitem (i) $X$ is a Cousin group  
\subitem (ii) a  finite covering of $X$ is a Cousin group of hypersurface type times $\mathbb C^*$, i.e,
of the form $C\times \mathbb C^*$, where $C$ is a Cousin group with $d_C = 1$.   
\subitem (iii) $X$ is a product $T\times\mathbb C$    
\subitem (iv) a finite covering of $X$ is a product $T\times(\mathbb C^*)^2$     
\end{enumerate} 
\end{remark}


\begin{remark} 
Let $G$ be the product of the 3--dimensional Heisenberg group and $\mathbb C$.  
As noted in Example 6 (a) in \cite{OR88} there is a discrete subgroup $\Gamma$ of $G$ 
such that $G/\Gamma$ is K\"{a}hler and $d_{G/\Gamma} = 3$.    
No finite covering of its holomorphic reduction splits as a product.  
So $d=2$ is optimal for the Main Theorem.   
\end{remark}   


\begin{remark}
The example discussed in subsection \ref{non-compact} has $d = 2$, but is not 
K\"{a}hler and its holomorphic reduction does not split holomorphically as a product.  
\end{remark}


\chapter{Future Work}

\section{Complete classification}
Let $G$ be a complex Lie group and $H$ a closed subgroup 
such that the homogeneous manifold $X=G/H$ is K\"{a}hler with $d_X \leq 2$.
In this dissertation we give a classification of such manifolds when the isotropy $H$ is discrete.
However a classification in the general case is missing.
We intend to solve the general case where $H$ is not necessarily discrete.  
In chapter \ref{tools} we introduced the normalizer and commutator fibrations.
By fibration lemma (Lemma \ref{flemma}) it is immediate that the base of the normalizer fibration 
\[
X:= G/H \to G/N
\]
has $d_{G/N}\leq 2$.

Chevalley showed (Theorem I3, page 173, \cite{Che51}) that $\mathfrak{g}' = \overline{\mathfrak{g}}'$ and 
thus $G' = \overline{G}'$ is acting as an {\bf algebraic group} on a projective space. 
Hence there is a closed $G'$ orbit. But we know that $G'\unlhd G$. Thus all $G'$ orbits are closed and we can 
consider the commutator fibration  
\[
G/N \xrightarrow{G'.N/H} G/G'.N
\]

The base of the commutator fibration is a positive dimensional Abelian Stein Lie group.
One has to analyze all the possibilities of the bundle over this Stein base. 

\section{Globalization of holomorphic action}

Let $X$ be a connected, compact, complex manifold.    
It was proved by Bochner--Montgomery \cite{BM47}   
proved that the identity component  $G=Aut^{0}(X)$ of the automorphism group of $X$
is a complex Lie group acting holomorphically on $X$.    
Assume $H$ is a connected real Lie group acting holomorphically on $X$.
Any $H$ orbit lies inside the corresponding $G$ orbit. 
In fact if $H$ acts transitively on $X$, i.e., if $H.x=X$, then $X=G.x$.
So a holomorphic transitive action of a real Lie group automatically extends to the complex Lie group $G$.   

\medskip
Gilligan and Heinzner extended this result to any connected homogeneous complex manifold $X$ with $d_X=1$ 
which can be thought of as "very close to compact" homogeneous manifolds \cite{GH98}.

\medskip
The situation fails for bigger $d_X$. 
An obvious example of this is an open disk in $\mathbb C$ which has $d=2$.

\medskip
For a more interesting example consider the action of $H=SL(3,\mathbb R)$ on $\mathbb C\mathbb P_2$.
There are two orbits which are $\mathbb R\mathbb P_2$ and 
its complement $X=\mathbb C\mathbb P_2\setminus\mathbb R\mathbb P_2$ which has $d_X=2$.
No complex Lie group acts transitively and holomorphically on $X$ \cite{HS81} 
Analyzing the real group action similar to the methods used in \cite{GH98} and using the classification of $X=G/H$
where $G$ is {\bf linear algebraic} homogeneous manifold with $d_X=2$ \cite{Akh83} and looking at 
the real orbits inside the homogeneous manifolds with the methods used in \cite{HS81}    
we have strong evidence that we can prove unless some 
special cases like the example above happen, 
any homogeneous manifold $X=G/H$ which is homogeneous under the holomorphic 
and transitive action of a {\bf Lie group}
$G$ with $d_X=2$ is also homogeneous under the holomorphic and transitive action of the 
globalization $G^{\mathbb C}$ of $G$. 

\medskip   
One of the essential tools we can use is an analogue of the normalizer fibration which was defined 
for a complex manifold $X$ homogeneous 
under the transitive holomorphic action of a real Lie group $G$ in the Nancy Band \cite{HO84}.    
The following observations concerning this fibration are essential for our purposes:    
\begin{enumerate}  
\item Given $X=G/H$ a homogeneous complex manifold, there exists a closed subgroup $J$ 
of $G$ containing $H$ with $J \subset N(H^{0})$ (but not necessarily equal!) such that 
$G/J$ is $G$-equivariantly an orbit in some projective space.   
\item  The orbit $G/J$ is open in the corresponding $G^{\mathbb C}$-orbit, where $G^{\mathbb C}$ 
denotes the smallest connected complex subgroup of the automorphism group of the projective 
space that contains $G$.     
\item The fiber $J/H$ is complex parallelizable.  
\item The bundle $G/H \to G/J$ is a locally trivial {\bf holomorphic} fiber bundle with 
structure group the {\bf complex}  Lie group $J/H^{0}$.  
\end{enumerate}  

\section{The Akhiezer Question} 

Almost thirty years ago, Akhiezer \cite{Akh84} posed a question concerning the existence of 
analytic hypersurfaces in complex homogeneous manifolds.  
The following variant of his question turns out to be essential for our proof of Theorem \ref{disc} 
stated above in order to show that the group $G$ is solvable.   
As far as we know, there is no general answer to this question.  
But there is enough known in special cases to be useful for our present purposes.  

\subsection{Modified Question of Akhiezer}   
Suppose $G/H$ is a K\"{a}hler homogeneous manifold that satisfies 
\begin{enumerate}  
\item [] (a) \; $\mathscr{O}(G/H) \simeq\mathbb C$  
\item [] (b) \; there is no proper parabolic subgroup of $G$ that contains $H$ 
\end{enumerate}  
Then $G/H$ is an Abelian complex Lie group; in particular, the group $G$ is solvable.  

In the future we intend to consider other settings where this problem might be solvable.    

\section{Stein sufficiency conditions}    

Suppose $G$ is a connected complex Lie group and $H$ is a closed complex subgroup    
such that $X := G/H$ is holomorphically separable.    
There are obvious examples where such $X$ are not Stein, e.g., $X = \mathbb C^n \setminus \{ 0 \}$    
for $n > 1$.    

\medskip    
One setting where one might hope that holomorphic separability implies Stein for complex    
homogeneous manifolds would be if the isotropy subgroup is discrete.     
There is, however, a counterexample to this in \cite{KO92}.    
In this counterexample one has a homogeneous complex manifold $Y := G/\Gamma$ 
with $d_{G/\Gamma} = 4 < 5 = \dim_{\mathbb C} G/\Gamma$.    
This violates the homological condition of Serre (see footnote, p. 79 in \cite{Ser53}) 
for Steinness of a complex manifold.    

\medskip   
This suggests that one should impose a further condition.   
Suppose $X = G/\Gamma$ is holomorphically separable and satisfies the condition    
\[  
       d_{G/\Gamma} \; \ge \;  \dim_{\mathbb C} G/\Gamma    .    
\]     
Again, one constructs counterexamples by considering $X := \mathbb C^k \times Y$, 
where $Y$ is the complex homogeneous manifold in the previous paragraph.  
As $k$ increases by one, the invariant $d$ increases by two and the dimension by one.   

\medskip    
Some further type of condition is needed that would prevent such examples.    
We will study this question in the setting where $G = S \ltimes R$ is a mixed group  
and $R$ is a simply connected nilpotent complex Lie group with the property that     
the smallest connected complex Lie subgroup of $G$ that contains $\Gamma\cap R$    
is assumed to be $R$.     
This can be expanded to include cases where $G$ is algebraic and    
$\Gamma\cap R$ is Zariski dense in $R$.

\end{document}